\pdfoutput=1 

\documentclass[11pt]{amsart}

\usepackage{amssymb,amsthm,enumitem,colonequals,tikz-cd,microtype}
\usepackage[normalem]{ulem} 

\usepackage[osf]{Baskervaldx}
\usepackage[baskervaldx]{newtxmath}
\usepackage[cal=boondoxo]{mathalfa} 

\usepackage[top=3.75cm, bottom=3cm, left=3.5cm, right=3.5cm]{geometry}

\usepackage{xcolor}
\colorlet{darkblue}{blue!55!black}
\colorlet{darkcyan}{cyan!50!black}
\colorlet{darkgreen}{green!60!black}

\PassOptionsToPackage{hyphens}{url}
\usepackage{hyperref}
\hypersetup{
    colorlinks=true,
    linkcolor=darkblue,
    urlcolor=darkcyan,
    citecolor=darkgreen,
}

\def\eqref#1{\textcolor{darkblue}{(\ref{#1})}}

\usepackage[nameinlink]{cleveref} 
\Crefformat{section}{#2\S#1#3}
\Crefmultiformat{section}{#2\S\S#1#3}{ and~#2#1#3}{, #2#1#3}{, and~#2#1#3}

\usepackage[pagewise]{lineno}
\let\oldequation\equation
\let\oldendequation\endequation
\renewenvironment{equation}{\linenomathNonumbers\oldequation}{\oldendequation\endlinenomath}
\expandafter\let\expandafter\oldequationstar\csname equation*\endcsname
\expandafter\let\expandafter\oldendequationstar\csname endequation*\endcsname
\renewenvironment{equation*}{\linenomathNonumbers\oldequationstar}{\oldendequationstar\endlinenomath}
\let\oldalign\align
\let\oldendalign\endalign
\renewenvironment{align}{\linenomathNonumbers\oldalign}{\oldendalign\endlinenomath}
\expandafter\let\expandafter\oldalignstar\csname align*\endcsname
\expandafter\let\expandafter\oldendalignstar\csname endalign*\endcsname
\renewenvironment{align*}{\linenomathNonumbers\oldalignstar}{\oldendalignstar\endlinenomath}

\theoremstyle{plain}
\newtheorem{theorem}{Theorem}[section]
\newtheorem{lemma}[theorem]{Lemma}
\newtheorem{corollary}[theorem]{Corollary}
\newtheorem{proposition}[theorem]{Proposition}
\newtheorem{question}[theorem]{Question}

\theoremstyle{definition}
\newtheorem{definition}[theorem]{Definition}
\newtheorem{example}[theorem]{Example}
\newtheorem{remark}[theorem]{Remark}
\newtheorem{setup}[theorem]{Setup}

\newtheorem*{ack}{Acknowledgments}

\AddToHook{env/conjecture/begin}{\crefalias{theorem}{conjecture}}
\AddToHook{env/lemma/begin}{\crefalias{theorem}{lemma}}
\AddToHook{env/corollary/begin}{\crefalias{theorem}{corollary}}
\AddToHook{env/proposition/begin}{\crefalias{theorem}{proposition}}
\AddToHook{env/definition/begin}{\crefalias{theorem}{definition}}
\AddToHook{env/remark/begin}{\crefalias{theorem}{remark}}
\AddToHook{env/setup/begin}{\crefalias{theorem}{setup}}
\AddToHook{env/example/begin}{\crefalias{theorem}{example}}
\AddToHook{env/notation/begin}{\crefalias{theorem}{notation}}
\AddToHook{env/question/begin}{\crefalias{theorem}{question}}

\numberwithin{equation}{section}
\numberwithin{theorem}{section}

\title[Germwise singular Fourier--Mukai partnerships]{Germwise singular Fourier--Mukai partnerships}

\author[P.~Lank]{Pat Lank}
\address{P.~Lank,
Dipartimento di Matematica “F. Enriques”, Universit\`{a} degli Studi di Milano, Via Cesare
Saldini 50, 20133 Milano, Italy}
\email{lankp.math@gmail.com}

\date{\today}

\keywords{Fourier--Mukai transforms, singularity categories, germs, algebraic spaces}

\subjclass[2020]{14A30 (primary), 14F08, 18G80} 

\begin{document}
    
\begin{abstract}
    We study how local behavior on the base controls whether integral transforms induce fully faithful functors or equivalences between singularity categories of proper flat families. 
    By working with \'{e}tale germs, we show (under mild hypotheses) that the loci where this happens are open and are compatible with \'{e}tale base change. 
    Moreover, we give examples, exhibit an autoequivalence of the singularity category which detects regularity of the base, and compute these loci for arithmetic families.
\end{abstract}

\maketitle

\tableofcontents

\section{Introduction}
\label{sec:intro}
 
The singularity category of a Noetherian algebraic space $X$ is the Verdier quotient $D_{\operatorname{sg}}(X) := D^b_{\operatorname{coh}}(X)/\operatorname{Perf}(X)$.
Buchweitz introduced it for Noetherian rings \cite{Buchweitz:2021}, building on Eisenbud's work on matrix factorizations \cite{Eisenbud:1980}.
In the geometric setting, Orlov later studied this for schemes \cite{Orlov:2004}.
See \cite{Booth:2026} for a recent survey.
 
As its name suggests, $D_{\operatorname{sg}}(X)$ measures the singularities of $X$.
Indeed, $D_{\operatorname{sg}}(X)$ is trivial if, and only if, $X$ is regular \cite[Lemma 5.28]{GuisadoVillaalgordo/Lank:2026}.
For suitable schemes, its idempotent completion only depends on a formal neighborhood of the singular locus \cite{Orlov:2011}.
Nevertheless, it remains subtle to understand how geometric operations affect $D_{\operatorname{sg}}(X)$.
 
\subsection{Motivation}
\label{sec:intro_motivation}
 
This work concerns integral transforms between singularity categories in families.
Let $S$ be a Noetherian algebraic space and $f_i \colon Y_i \to S$ be proper flat morphisms of Noetherian algebraic spaces for $i\in\{1,2\}$.
Denote by $p_i \colon Y_1\times_S Y_2 \to Y_i$ the canonical projections.
For $E\in D^b_{\operatorname{coh}}(Y_1\times_S Y_2)$, the \textit{integral transform} with kernel $E$ is the functor $\Phi_E \colon D_{\operatorname{qc}}(Y_1) \to D_{\operatorname{qc}}(Y_2)$ given by $A \mapsto \mathbf{R}(p_2)_\ast (\mathbf{L}p_1^\ast A \otimes^{\mathbf{L}} E)$. 
 
An integral transform $\Phi_E$ induces a functor $\Phi_E^{\operatorname{sg}}\colon D_{\operatorname{sg}}(Y_1) \to D_{\operatorname{sg}}(Y_2)$ precisely if it preserves both $D^b_{\operatorname{coh}}$ and $\operatorname{Perf}$.
These conditions are equivalent to $E$ being relatively perfect over each $Y_i$ \cite[Corollary 1.9]{GuisadoVillaalgordo/Lank:2026}.
See also \cite{Dutta/Lank/ManaliRahul:2025,Ballard:2009, Rizzardo:2017}.
If $\Phi_E^{\operatorname{sg}}$ is an equivalence, we say that $Y_1$ and $Y_2$ are \textbf{singular Fourier--Mukai partners} over $S$.
 
Central to this work is the following question.
 
\begin{question}
    \label{question:locality}
    Is being fully faithful, or an equivalence, on singularity categories a local condition on the base $S$?
    If so, which notion of locality detects it?
\end{question}
 
On $D^b_{\operatorname{coh}}$, the answer is well understood: whether $\Phi_E$ is fully faithful or an equivalence on $D^b_{\operatorname{coh}}$ is detected on fibers.
See \cite{Ruiperez/Hernandez/Martin/SanchodeSalas:2009, GuisadoVillaalgordo/Lank/ManaliRahul/Pavic:2025, GuisadoVillaalgordo/Lank:2026}.
This fails for $D_{\operatorname{sg}}$.
 
\begin{example}
    Suppose $S$ is a nonregular Noetherian scheme. 
    Let $\mathcal{E}$ be a vector bundle on $S$ of positive rank.
    Set $Y_1 = Y_2 = \mathbf{P}_S(\mathcal{E})$ and $E$ to be the zero object.
    Since every fiber of $\mathbf{P}_S(\mathcal{E}) \to S$ is regular, the base change of $\Phi_E^{\operatorname{sg}}$ to each fiber is an equivalence between trivial categories.
    However, $\mathbf{P}_S(\mathcal{E})$ is not regular, so $\Phi_E^{\operatorname{sg}}$ itself is not an equivalence.
    See \Cref{ex:arithmetic_conic_bundle} for the same phenomenon with a nonzero kernel.
\end{example}
 
Testing after arbitrary field extensions is also unreliable.
 
\begin{example}
    \label{ex:kollar}
    Koll\'{a}r constructs a projective curve over a field of positive characteristic which is regular.
    However, the curve is singular after a purely inseparable field extension \cite[Remark 16]{Kollar:2011}.
    Hence, its singularity category is trivial, whereas that of its base change is not.
    See \Cref{rmk:kollar_curve}.
\end{example}
 
\subsection{Results}
\label{sec:intro_results}
 
\subsubsection{Openness}
\label{sec:intro_openness}
 
Our answer to \Cref{question:locality} is that locality should be measured by local rings of the base rather than by fibers.
For schemes, one could use local rings directly.
For algebraic spaces, the natural substitute is given by \'{e}tale germs. 
This approach is well behaved because regularity is \'{e}tale local.
 
\begin{definition} 
    Let $p\in |S|$.
    An \textit{\'{e}tale germ} of $(S,p)$ is an \'{e}tale morphism $s\colon U \to S$ from a scheme together with a point $u\in |s|^{-1}(p)$.
    We say that $\Phi_E^{\operatorname{sg}}$ is \textit{fully faithful} (resp.\ an \textit{equivalence}) \textit{at $p$} if there exists an \'{e}tale germ $(U,u)\to (S,p)$ such that the base change of $\Phi_E^{\operatorname{sg}}$ along $\operatorname{Spec}(\mathcal{O}_{U,u}) \to S$ is fully faithful (resp.\ an equivalence).
    Denote by $\operatorname{fm}_{\operatorname{sg}}(E)$ (resp.\ $\operatorname{FM}_{\operatorname{sg}}(E)$) the collection of such points.
\end{definition}
 
See \Cref{sec:germs}.
By \Cref{lem:singular_FM_locus_independence}, this notion does not depend on the choice of \'{e}tale germ.
In particular, if $S$ is a scheme, then $\Phi_E^{\operatorname{sg}}$ is fully faithful (resp.\ an equivalence) at $p$ if, and only if, its base change along $\operatorname{Spec}(\mathcal{O}_{S,p}) \to S$ is fully faithful (resp.\ an equivalence).

Our main result is the following.
 
\begin{theorem}
    \label{introthm:openness_singular_fm}
    Let $S$ be a quasi-compact quasi-separated quasi-excellent algebraic space.
    Suppose $f_i \colon Y_i \to S$ are proper flat morphisms of Noetherian algebraic spaces for $i\in\{1,2\}$.
    Denote by $p_i \colon Y_1\times_S Y_2 \to Y_i$ the canonical projections.
    If $E\in D^b_{\operatorname{coh}}(Y_1\times_S Y_2)$ is such that both $E$ and $\operatorname{\mathbf{R}\mathcal{H}\!\mathit{om}}(E, p_2^! \mathcal{O}_{Y_2})$ are relatively perfect over each $Y_i$, then $\operatorname{fm}_{\operatorname{sg}}(E)$ and $\operatorname{FM}_{\operatorname{sg}}(E)$ are open subsets.
\end{theorem}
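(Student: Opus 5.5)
Full faithfulness and essential surjectivity of $\Phi_E^{\operatorname{sg}}$ will be recast as the vanishing of coherent, base-supported objects, and openness then comes from semicontinuity of supports together with a spreading-out argument from local rings to étale neighbourhoods. The kernel $E^\vee := \operatorname{\mathbf{R}\mathcal{H}\!\mathit{om}}(E, p_2^!\mathcal{O}_{Y_2})$ is relatively perfect over each $Y_i$ by hypothesis, so $\Phi_{E^\vee}$ also descends to singularity categories. By Grothendieck duality for the proper map $p_2$, $\Phi_{E^\vee}$ is a left (or right, after twisting by $p_1^!$) adjoint to $\Phi_E$ on $D^b_{\operatorname{coh}}$. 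This adjunction should pass to the Verdier quotients, so $\Phi_{E^\vee}^{\operatorname{sg}}$ is adjoint to $\Phi_E^{\operatorname{sg}}$.

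Full faithfulness is then equivalent to the unit (or counit) $\eta\colon \mathrm{id}\to \Phi_{E^\vee}^{\operatorname{sg}}\Phi_E^{\operatorname{sg}}$ being an isomorphism. Equivalence is equivalent to both unit and counit being isomorphisms. Composition of integral transforms gives kernels
\[
K_1 = \Phi_{E^\vee}\circ\Phi_E \in D^b_{\operatorname{coh}}(Y_1\times_S Y_1), \qquad K_2 = \Phi_E\circ\Phi_{E^\vee}\in D^b_{\operatorname{coh}}(Y_2\times_S Y_2),
\]
with natural maps $\mathcal{O}_{\Delta_1}\to K_1$ and $K_2\to \mathcal{O}_{\Delta_2}$ inducing unit and counit. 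Let $C_i$ be the cones. These are relatively perfect kernels, and $\eta$ is invertible on $D_{\operatorname{sg}}$ exactly when $\Phi_{C_1}^{\operatorname{sg}}=0$.

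The key reduction is the following criterion: for a relatively perfect kernel $C$, $\Phi_C^{\operatorname{sg}}=0$ should be equivalent to $\Phi_C$ sending $D^b_{\operatorname{coh}}(Y_1)$ into $\operatorname{Perf}$. I would test this on a classical generator of $D_{\operatorname{sg}}(Y_1)$, which exists by quasi-excellence (for instance a strong generator $G$ of $D^b_{\operatorname{coh}}(Y_1)$, as in Aoki/Neeman). Then $\Phi_C^{\operatorname{sg}}=0$ iff the single object $\Phi_C(G)$ is perfect. The non-perfect locus of a bounded coherent complex on a Noetherian space is closed, so the set of points $y\in Y_2$ where $\Phi_C(G)_y$ is not perfect is a closed set $Z$. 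Since $f_2$ is proper, $f_2(Z)\subseteq |S|$ is closed, and its complement is the proposed open locus.

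I would then check compatibility with base change to germs. For an étale germ $(U,u)$, flatness of $f_i$ and base-change for integral transforms (Tor-independence) give that the kernels base-change correctly and $\Phi_{C}$ commutes with pullback. The pullback of $G$ to $Y_i\times_S\operatorname{Spec}\mathcal{O}_{U,u}$ remains a generator of $D_{\operatorname{sg}}$ up to direct summands; this needs some care, using that essentially étale pullback and localization generate up to summands. Perfectness is local and detected on stalks, so $p\in\operatorname{fm}_{\operatorname{sg}}(E)$ iff $f_2^{-1}(p)\cap Z=\emptyset$, which shows openness. For $\operatorname{FM}_{\operatorname{sg}}(E)$, intersect the analogous open sets for $C_1$ and $C_2$.

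The main obstacle will be the generation statement after base change to $\operatorname{Spec}\mathcal{O}_{U,u}$. I would first try to prove that the pullback of a strong generator of $D^b_{\operatorname{coh}}(Y_1)$ classically generates $D^b_{\operatorname{coh}}$ of the base change up to summands, using that every coherent sheaf there is a summand of a localization of a pullback. A second delicate point is verifying that the unit on $D_{\operatorname{sg}}$ really is induced by $\mathcal{O}_\Delta\to K_1$, so that its cone is computed by $C_1$.
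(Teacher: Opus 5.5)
Your outline has the same structure as the paper's proof. You descend $\Phi_E\dashv\Phi_{E_R}$ to $D_{\operatorname{sg}}$ (\Cref{lem:restrict_adjunction_to_Verdier_quotient}) and reduce full faithfulness to perfectness of one object attached to a classical generator $G_1$ of $D^b_{\operatorname{coh}}(Y_1)$. You then push the closed non-perfect locus $Z\subseteq|Y_1|$ forward along the closed map $|f_1|$, and handle $\operatorname{FM}_{\operatorname{sg}}(E)$ via the counit on $Y_2$.

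There are some slips to fix. $\Phi_{E_R}$ is already the \emph{right} adjoint of $\Phi_E$; twisting by $p_1^!$ would give the left adjoint. For the unit, $Z$ lives on $Y_1$ and is pushed forward by $f_1$, not $f_2$. Strong generation (Neeman/Aoki) needs finite Krull dimension, which is not assumed here, but a classical generator suffices and exists by the $J\textrm{-}2$ property (\Cref{cor:classical_generator_for_J2}).

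More substantively, your reduction rests on an unproved claim: that the unit is induced by a morphism of kernels $\mathcal{O}_{\Delta_1}\to K_1$, with $K_1$ bounded coherent and $C_1$ relatively perfect, compatibly with base change. Establishing this over an algebraic-space base is real work, and you can avoid it. Work instead with $\operatorname{cone}(\eta_{G_1})\in D^b_{\operatorname{coh}}(Y_1)$. The $A$ with $\operatorname{cone}(\eta_A)$ perfect are exactly those at which the induced unit on $D_{\operatorname{sg}}(Y_1)$ is invertible, so they form a thick subcategory. Hence $\Phi_E^{\operatorname{sg}}$ is fully faithful if and only if $\operatorname{cone}(\eta_{G_1})$ is perfect (\Cref{lem:fully_faithful_iff_cone_of_classical_generator_is_perfect}).

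Set $T:=\operatorname{Spec}(\mathcal{O}_{U,u})$ and let $g\colon Y_1\times_S T\to Y_1$ be the projection. Two genuine inputs remain, and your phrase ``the kernels base-change correctly'' passes over the second.

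(i) $\mathbf{L}g^\ast$ must be essentially dense on $D^b_{\operatorname{coh}}$, so that $\mathbf{L}g^\ast G_1$ is again a classical generator. The paper combines essential density of \'{e}tale pullback \cite[Proposition 1.4]{Lank:2026c} with essential \emph{surjectivity} of pullback to the localization (\Cref{lem:counit_isomorphism_for_localization}). The latter is proved by writing a bounded coherent complex on the localization as the pullback of a coherent subcomplex of its pushforward.

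(ii) The unit must commute with this quasi-affine flat base change, including $(E_R)_T\cong(E_T)_R$. Then $\mathbf{L}g^\ast\operatorname{cone}(\eta_{G_1})$ is the cone of the unit of $\Phi_{E_T}\dashv\Phi_{(E_T)_R}$ at $\mathbf{L}g^\ast G_1$ \cite[Proposition 4.19 \& Remark 4.29]{GuisadoVillaalgordo/Lank:2026}. Tor-independent base change of $\Phi_E$ and $\Phi_{E_R}$ alone does not give this, because you must also match the base-changed right adjoint and its unit.

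Granting (i) and (ii), your stalk argument finishes the proof as in the paper. Perfectness is compatible with the flat local maps of local rings, and closed points of $Y_1\times_S T$ lie over the closed point of $T$ by properness. Also, $|f_1|(Z)$ is closed and hence stable under specialization. Together these give $|S|\setminus\operatorname{fm}_{\operatorname{sg}}(E)=|f_1|(Z)$.
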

 
See \Cref{thm:openness_singular_fm}.
This is the singularity category analog of \cite[Theorem 1.3]{GuisadoVillaalgordo/Lank:2026}.
The hypothesis on $E$ ensures that $\Phi_E$ and its right adjoint both induce functors on singularity categories.
For instance, it is satisfied whenever each $f_i$ is Gorenstein and $E$ is perfect.
On the other hand, if each $f_i$ is smooth, then any $E$ which is relatively perfect over each $Y_i$ is perfect.
See \Cref{ex:perfect_kernels,ex:kernels_for_smooth}.
 
The proof of \Cref{introthm:openness_singular_fm} combines generation with the base change techniques of \cite{GuisadoVillaalgordo/Lank:2026}.
We discuss the case of full faithfulness.
First, quasi-excellence ensures each $Y_i$ is $J\textrm{-}2$, and so $D^b_{\operatorname{coh}}(Y_i)$ admits a classical generator.
This is a variation of \cite[Theorem 1.1]{Dey/Lank:2024a} for Noetherian algebraic spaces, which we record in \Cref{prop:j_condition_stacks} and \Cref{cor:classical_generator_for_J2}.
 
Second, full faithfulness on singularity categories reduces to perfectness of the cone of the unit at a classical generator.
Third, derived pullback to a germ is essentially dense on bounded coherent complexes, and units are compatible with base change along quasi-affine morphisms.
See \Cref{lem:fully_faithful_iff_cone_of_classical_generator_is_perfect,lem:counit_isomorphism_for_localization} and \cite[Remark 4.29]{GuisadoVillaalgordo/Lank:2026}.

Combining these with \cite[Lemma 3.46]{GuisadoVillaalgordo/Lank:2026}, the complement of $\operatorname{fm}_{\operatorname{sg}}(E)$ is the image under the closed map $|f_1|$ of the closed subset of points of $|Y_1|$ at which the cone of the unit at the classical generator is not perfect.
Thus, the obstruction is measured through local rings of $Y_1$, rather than on the fibers of $f_1$.
 
\subsubsection{Germwise criteria}
\label{sec:intro_criteria}
 
We prove that singular Fourier--Mukai partnership is determined germwise, and that it suffices to check closed points.
 
\begin{proposition}
    \label{introprop:germwise_criteria}
    Assume the hypotheses of \Cref{introthm:openness_singular_fm}.
    The following are equivalent:
    \begin{enumerate}
        \item $\Phi_E^{\operatorname{sg}}$ is fully faithful (resp.\ an equivalence)
        \item $\Phi_E^{\operatorname{sg}}$ is fully faithful (resp.\ an equivalence) at all $p\in |S|$
        \item $\Phi_E^{\operatorname{sg}}$ is fully faithful (resp.\ an equivalence) at all closed points $p\in |S|$.
    \end{enumerate}
\end{proposition}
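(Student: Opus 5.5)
The plan is to prove (1)$\Rightarrow$(2)$\Rightarrow$(3)$\Rightarrow$(1). The first two implications should be formal. The real content is (3)$\Rightarrow$(1), which I would get from the mechanism behind \Cref{introthm:openness_singular_fm}: the non-full-faithfulness locus is the image under the closed map $|f_1|$ of a closed subset of $|Y_1|$.

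For (1)$\Rightarrow$(2), fix $p\in|S|$ and an \'etale germ $(U,u)\to(S,p)$. I would show that fully faithfulness (resp.\ equivalence) of $\Phi_E^{\operatorname{sg}}$ is inherited by the base change along $\operatorname{Spec}(\mathcal{O}_{U,u})\to S$. The tools are the classical generator $G$ of $D^b_{\operatorname{coh}}(Y_1)$ from \Cref{cor:classical_generator_for_J2} and \Cref{lem:fully_faithful_iff_cone_of_classical_generator_is_perfect}: full faithfulness is equivalent to perfectness of the cone $C$ of the unit $G\to \Phi_E^R\Phi_E G$. Here the right adjoint is the integral transform with kernel $\operatorname{\mathbf{R}\mathcal{H}\!\mathit{om}}(E,p_2^!\mathcal{O}_{Y_2})$, and the hypothesis ensures it descends to singularity categories.

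Next, the composite $\operatorname{Spec}(\mathcal{O}_{U,u})\to S$ is flat and essentially of finite type. By \Cref{lem:counit_isomorphism_for_localization} and \cite[Remark 4.29]{GuisadoVillaalgordo/Lank:2026}, units commute with this base change. So the cone for the base-changed transform at the pullback $G'$ of $G$ is the pullback of $C$, which is perfect because flat pullback preserves perfect complexes. Since pullback to the germ is essentially dense on bounded coherent complexes, $G'$ is a classical generator up to summands. The same lemma then gives full faithfulness at $p$.

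For equivalences, I would run the same argument with the counit on a classical generator of $D^b_{\operatorname{coh}}(Y_2)$, since an equivalence is a fully faithful functor whose right adjoint is also fully faithful. (2)$\Rightarrow$(3) is trivial.

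For (3)$\Rightarrow$(1), let $Z\subseteq|Y_1|$ be the set of points $y$ where the stalk $C_y$ is not perfect. This is closed, since the non-perfect locus of a bounded coherent complex is closed. The argument in \Cref{introthm:openness_singular_fm}, via \cite[Lemma 3.46]{GuisadoVillaalgordo/Lank:2026}, identifies $|S|\setminus\operatorname{fm}_{\operatorname{sg}}(E)$ with $|f_1|(Z)$, which is closed because $f_1$ is proper. Now suppose $\Phi_E^{\operatorname{sg}}$ is fully faithful at every closed point. Then $|f_1|(Z)$ is a closed subset of the quasi-compact space $|S|$ containing no closed point. A nonempty closed subset of a quasi-compact quasi-separated algebraic space contains a closed point of $|S|$, so $|f_1|(Z)=\emptyset$, hence $Z=\emptyset$. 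Thus $C$ is perfect, and $\Phi_E^{\operatorname{sg}}$ is fully faithful by \Cref{lem:fully_faithful_iff_cone_of_classical_generator_is_perfect}. The equivalence case is the same, adding the analogous closed set $Z'\subseteq|Y_2|$ for the counit.

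The main obstacle is the precise identification of the complement of $\operatorname{fm}_{\operatorname{sg}}(E)$ with $|f_1|(Z)$, that is, the germwise compatibility of the unit cone with base change. This is exactly what the openness proof establishes, so I would reuse it directly rather than reprove it. A secondary point is that closed points are dense in nonempty closed subsets of $|S|$. For qcqs algebraic spaces this holds because $|S|$ is sober and quasi-compact; I would cite the Stacks Project for it.
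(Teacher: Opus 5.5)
Your proposal is correct and follows essentially the same route as the paper. The paper proves (1)$\Rightarrow$(2) by pulling back $\operatorname{cone}(\eta_{G_1})$ along the germ and using essential density plus compatibility of units with quasi-affine base change, and proves (3)$\Rightarrow$(1) by reusing the identification $|S|\setminus\operatorname{fm}_{\operatorname{sg}}(E)=|f_1|(|Y_1|\setminus U_1)$ from the proof of \Cref{thm:openness_singular_fm}. Your step ``a nonempty closed subset of $|S|$ contains a closed point'' is just the complementary form of the paper's ``an open set containing all closed points is all of $|S|$, since every point specializes to a closed point.''
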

 
See \Cref{prop:germwise_criteria}.
This should be compared to its fiberwise counterparts for bounded derived categories \cite[Proposition 2.15]{Ruiperez/Hernandez/Martin/SanchodeSalas:2009} and \cite[Theorem 1.2]{GuisadoVillaalgordo/Lank/ManaliRahul/Pavic:2025}.
To the best of our knowledge, no such criterion exists in the literature for singularity categories.
 
For a morphism $t\colon T \to S$, denote by $E_t$ the derived pullback of $E$ to $Y_1\times_S Y_2\times_S T$.
If $L/k$ is a field extension and $t\colon \operatorname{Spec}(L) \to \operatorname{Spec}(k)$ is the induced morphism, we write $E_L$ for $E_t$.
Over a field, the local rings of \'{e}tale germs of the unique point are finite separable field extensions.
This yields the following.
 
\begin{corollary}
    \label{introcor:criteria}
    Assume the hypotheses of \Cref{introthm:openness_singular_fm} with $S=\operatorname{Spec}(k)$ for a field $k$.
    The following are equivalent:
    \begin{enumerate}
        \item $\Phi_E^{\operatorname{sg}}$ is fully faithful (resp.\ an equivalence)
        \item $\Phi_{E_L}^{\operatorname{sg}}$ is fully faithful (resp.\ an equivalence) for all finite separable field extensions $L/k$
        \item $\Phi_{E_L}^{\operatorname{sg}}$ is fully faithful (resp.\ an equivalence) for some finite separable field extension $L/k$.
    \end{enumerate}
\end{corollary}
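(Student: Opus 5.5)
The plan is to deduce the corollary from \Cref{introprop:germwise_criteria} applied to $S=\operatorname{Spec}(k)$, using the independence of the choice of étale germ (\Cref{lem:singular_FM_locus_independence}). The space $|S|$ has a unique point $p$, and it is closed, so \Cref{introprop:germwise_criteria} says that $\Phi_E^{\operatorname{sg}}$ is fully faithful (resp.\ an equivalence) if and only if it is so at $p$. The key input is a description of the étale germs of $(\operatorname{Spec}(k),p)$. An étale morphism $U\to\operatorname{Spec}(k)$ from a scheme is locally a finite disjoint union of spectra of finite separable extensions. Hence for any point $u\in U$, the local ring $\mathcal{O}_{U,u}$ is a finite separable field extension $L/k$. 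Conversely, every finite separable $L/k$ arises this way, from the germ $(\operatorname{Spec}(L),\ast)$. Base change of $\Phi_E^{\operatorname{sg}}$ along $\operatorname{Spec}(\mathcal{O}_{U,u})\to S$ is then exactly $\Phi_{E_L}^{\operatorname{sg}}$. Here we use that $E_L$ is the derived pullback and that the hypotheses are stable under this flat base change, so that $\Phi_{E_L}^{\operatorname{sg}}$ makes sense.

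With this in hand the implications are formal.
\begin{itemize}
\item (1)$\Rightarrow$(2): By \Cref{introprop:germwise_criteria}, (1) gives that $\Phi_E^{\operatorname{sg}}$ is fully faithful (resp.\ an equivalence) at $p$. \Cref{lem:singular_FM_locus_independence} says the condition at $p$ holds for one étale germ if and only if it holds for every étale germ. Applying this to the germs $(\operatorname{Spec}(L),\ast)$ shows that $\Phi_{E_L}^{\operatorname{sg}}$ has the property for all finite separable $L$.
\item (2)$\Rightarrow$(3): This is trivial; take $L=k$.
\item (3)$\Rightarrow$(1): If $\Phi_{E_L}^{\operatorname{sg}}$ has the property for some finite separable $L$, then the germ $(\operatorname{Spec}(L),\ast)$ witnesses the property at $p$ by definition. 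Then \Cref{introprop:germwise_criteria} gives (1).
\end{itemize}

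The main point to check carefully is that the independence lemma really applies in the direction "property at $p$ implies property for every germ", and not only "for some germ". If \Cref{lem:singular_FM_locus_independence} is phrased only as independence of the locus, I would argue directly as follows. Any two finite separable extensions $L,L'$ embed in a common finite separable extension $M$. Since $\operatorname{Spec}(M)\to\operatorname{Spec}(L)$ is a finite étale cover, \Cref{introprop:germwise_criteria} can be applied over the base $\operatorname{Spec}(L)$, which is also a field and quasi-excellent. This compares $\Phi_{E_L}^{\operatorname{sg}}$ with $\Phi_{E_M}^{\operatorname{sg}}$ and reduces everything to the germwise statement. A secondary routine check is the identification of the base-changed kernel and functor with $E_L$ and $\Phi_{E_L}^{\operatorname{sg}}$, which follows from flat base change for $\operatorname{Spec}(L)\to\operatorname{Spec}(k)$.
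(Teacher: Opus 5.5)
Your proposal is correct and follows essentially the same route as the paper. The paper identifies the \'{e}tale germs of $(\operatorname{Spec}(k),p)$ with finite separable extensions $L/k$, then combines \Cref{prop:germwise_criteria} at the unique closed point with \Cref{lem:singular_FM_locus_independence}. That lemma is stated in the ``some germ if and only if every germ'' form, so your fallback argument via a common extension $M$ is not needed.
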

 
See \Cref{cor:criteria}.
This is a singularity category analog of \cite[Lemma 2.12]{Orlov:2002}.
We refer to \cite{Verasdanis:2026} for related results concerning separable extensions in tt-geometry. 
Koll\'{a}r's curve shows that the separability assumption cannot be removed.
Indeed, taking $Y_1$ to be Koll\'{a}r's curve, $Y_2 = \operatorname{Spec}(k)$, and $E$ the zero object, $\Phi_E^{\operatorname{sg}}$ is an equivalence between trivial categories, whereas its base change along a suitable purely inseparable extension is not.
 
\subsubsection{Base change}
\label{sec:intro_base_change}
 
The loci are also compatible with \'{e}tale base change.
 
\begin{proposition}
    \label{introprop:base_change}
    Assume the hypotheses of \Cref{introthm:openness_singular_fm}.
    For any \'{e}tale morphism $t\colon S^\prime \to S$ of Noetherian algebraic spaces,
    \begin{displaymath}
        |t|^{-1}(\operatorname{fm}_{\operatorname{sg}}(E)) = \operatorname{fm}_{\operatorname{sg}}(E_t)
        \quad \textrm{and} \quad
        |t|^{-1}(\operatorname{FM}_{\operatorname{sg}}(E)) = \operatorname{FM}_{\operatorname{sg}}(E_t).
    \end{displaymath}
\end{proposition}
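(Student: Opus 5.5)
The plan is to reduce the statement to the germwise definition of the loci, together with the independence of the choice of étale germ from \Cref{lem:singular_FM_locus_independence}. Fix $p^\prime \in |S^\prime|$ and set $p = |t|(p^\prime)$. We must show that $\Phi_{E_t}^{\operatorname{sg}}$ is fully faithful (resp.\ an equivalence) at $p^\prime$ if, and only if, $\Phi_E^{\operatorname{sg}}$ is so at $p$.

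First I would check that the hypotheses of \Cref{introthm:openness_singular_fm} pass to the base change. Set $Y_i^\prime = Y_i\times_S S^\prime$. The morphisms $Y_i^\prime \to S^\prime$ are proper and flat, and $S^\prime$ is Noetherian. I also need $S^\prime$ to be quasi-excellent, which holds because it is étale over a quasi-excellent space. Relative perfectness of $E$ over each $Y_i$ is preserved under flat base change; étale suffices here. Next, $p_2^!\mathcal{O}_{Y_2}$ is compatible with flat base change, since the upper shriek functor for proper morphisms commutes with flat pullback. Finally, $\operatorname{\mathbf{R}\mathcal{H}\!\mathit{om}}$ from a coherent object commutes with flat pullback. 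Together these show that $\operatorname{\mathbf{R}\mathcal{H}\!\mathit{om}}(E_t, p_2^{\prime!}\mathcal{O}_{Y_2^\prime})$ is the pullback of $\operatorname{\mathbf{R}\mathcal{H}\!\mathit{om}}(E, p_2^!\mathcal{O}_{Y_2})$, and hence is again relatively perfect over each $Y_i^\prime$. The loci $\operatorname{fm}_{\operatorname{sg}}(E_t)$ and $\operatorname{FM}_{\operatorname{sg}}(E_t)$ therefore make sense.

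The key step is a comparison of germs. Choose an étale germ $(U^\prime,u^\prime)\to(S^\prime,p^\prime)$ with $U^\prime$ a scheme. The composite $U^\prime \to S^\prime \to S$ is étale, and $u^\prime$ maps to $p$, so $(U^\prime,u^\prime)$ is also an étale germ of $(S,p)$. The base change of $\Phi_{E_t}^{\operatorname{sg}}$ along $\operatorname{Spec}(\mathcal{O}_{U^\prime,u^\prime}) \to S^\prime$ coincides with the base change of $\Phi_E^{\operatorname{sg}}$ along the composite $\operatorname{Spec}(\mathcal{O}_{U^\prime,u^\prime}) \to S$. This holds because iterated fibre products agree, $(Y_1\times_S Y_2)\times_S S^\prime \times_{S^\prime} \operatorname{Spec}(\mathcal{O}_{U^\prime,u^\prime}) \cong (Y_1\times_S Y_2)\times_S \operatorname{Spec}(\mathcal{O}_{U^\prime,u^\prime})$, and derived pullback of the kernel is compatible with composition. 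The integral transforms are then literally the same functor, and so are the induced functors on singularity categories.

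Consequently, full faithfulness (resp.\ equivalence) of $\Phi_{E_t}^{\operatorname{sg}}$ at $p^\prime$ is witnessed by the germ $(U^\prime,u^\prime)$, which is also a germ of $(S,p)$. By \Cref{lem:singular_FM_locus_independence}, whether $p$ lies in the locus does not depend on which étale germ of $(S,p)$ is used. Applying that lemma on both sides gives both inclusions of $|t|^{-1}(\operatorname{fm}_{\operatorname{sg}}(E)) = \operatorname{fm}_{\operatorname{sg}}(E_t)$ at once, and likewise for $\operatorname{FM}_{\operatorname{sg}}$.

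The main obstacle I anticipate is whether \Cref{lem:singular_FM_locus_independence} applies to arbitrary germs of $(S,p)$, including those factoring through $S^\prime$. If it only compares germs via common refinements, I would pass to a common refinement instead. Any two étale germs of $(S,p)$ admit a common étale refinement $(W,w)$. Moreover $\mathcal{O}_{U,u}\to\mathcal{O}_{W,w}$ is a local étale map with trivial residue extension allowed, in particular faithfully flat. So I would need the local claim that full faithfulness (resp.\ equivalence) on $D_{\operatorname{sg}}$ ascends and descends along such faithfully flat local étale maps. For this I would argue as in the proof of openness: full faithfulness is perfectness of the cone of the unit at a classical generator (\Cref{lem:fully_faithful_iff_cone_of_classical_generator_is_perfect}). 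Perfectness descends along faithfully flat maps, and the unit commutes with flat base change. Essential surjectivity is handled similarly using the counit for the right adjoint.
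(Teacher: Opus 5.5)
Your proposal is correct and follows essentially the same route as the paper. An étale germ of $(S^\prime,p^\prime)$ composed with $t$ is an étale germ of $(S,|t|(p^\prime))$ with the same base-changed functor, and \Cref{lem:singular_FM_locus_independence} then gives both inclusions. That lemma is stated for \emph{every} étale germ, so your fallback via common refinements is unnecessary.

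The only difference is in checking \Cref{setup:relatively_perfect_adjoint_pair} for $E_t$. You use flat base change of $p_2^!$ and $\operatorname{\mathbf{R}\mathcal{H}\!\mathit{om}}$ along $t$ itself. The paper instead passes to an affine étale presentation $s\colon U\to S^\prime$ and compares $((E_t)_R)_s\cong(E_R)_{t\circ s}$ using quasi-affineness of $s$ and $t\circ s$. It then descends relative perfectness of $(E_t)_R$ along the faithfully flat $s$.
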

 
See \Cref{prop:base_change}.
A base change result for $D^b_{\operatorname{coh}}$ appears in \cite[Theorem 1.3]{GuisadoVillaalgordo/Lank:2026}, which allows arbitrary Noetherian base change.
This fails for singularity categories.
In the example following \Cref{introcor:criteria}, both loci are all of $|\operatorname{Spec}(k)|$, whereas both loci of the base change along a suitable purely inseparable extension are empty.
Thus, \'{e}tale base change provides a natural setting for the base-change theorem.
 
\subsection{An autoequivalence}
\label{sec:intro_autoequivalence}
 
Singularity categories can also exhibit autoequivalences which are not seen by $D^b_{\operatorname{coh}}$.
Let $f\colon Y \to S$ be a proper flat morphism of Noetherian algebraic spaces with $S$ regular.
Denote by $\mathcal{I}_{\Delta_f}$ the ideal sheaf of the diagonal of $f$.
On $D^b_{\operatorname{coh}}$, $\Phi_{\mathcal{I}_{\Delta_f}}$ is an autoequivalence for elliptic fibrations \cite[Theorem 1.4]{GuisadoVillaalgordo/Lank:2026}, extending \cite{Ruiperez/Hernandez/Martin/SanchodeSalas:2009}.
 
However, it is not even faithful in general.
For instance, if the canonical morphism $\mathcal{O}_S \to \mathbf{R} f_\ast \mathcal{O}_Y$ is an isomorphism, then $\Phi_{\mathcal{I}_{\Delta_f}}(\mathcal{O}_Y) \cong 0$ by \Cref{lem:diagonal_ideal_kills_pullbacks}.
Thus, $\Phi_{\mathcal{I}_{\Delta_f}}$ is not faithful on $D^b_{\operatorname{coh}}(Y)$ (e.g.\ take the identity and zero morphism on $\mathcal{O}_Y$).
In any case, over a regular base, we prove that it induces an autoequivalence of $D_{\operatorname{sg}}(Y)$.
 
\begin{proposition}
    \label{prop:ideal_sheaf_diagonal}
    Let $f\colon Y \to S$ be a proper flat morphism of Noetherian algebraic spaces where $S$ is regular.
    Denote by $\Delta_f \colon Y \to Y\times_S Y$ the diagonal morphism of $f$ with associated ideal sheaf $\mathcal{I}_{\Delta_f}\subseteq \mathcal{O}_{Y\times_S Y}$.
    Then $\Phi_{\mathcal{I}_{\Delta_f}}$ induces an autoequivalence on $D_{\operatorname{sg}}(Y)$. 
\end{proposition}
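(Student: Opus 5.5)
The plan is to compare $\Phi_{\mathcal{I}_{\Delta_f}}$ with the identity transform $\Phi_{\mathcal{O}_{\Delta_f}}$ via the short exact sequence
\begin{displaymath}
    0 \to \mathcal{I}_{\Delta_f} \to \mathcal{O}_{Y\times_S Y} \to (\Delta_f)_\ast \mathcal{O}_Y \to 0,
\end{displaymath}
which gives a distinguished triangle of kernels. Integral transforms are exact in the kernel, so for every $A\in D_{\operatorname{qc}}(Y)$ there is a triangle
\begin{displaymath}
    \Phi_{\mathcal{I}_{\Delta_f}}(A) \to \Phi_{\mathcal{O}_{Y\times_S Y}}(A) \to \Phi_{(\Delta_f)_\ast\mathcal{O}_Y}(A) \to \Phi_{\mathcal{I}_{\Delta_f}}(A)[1],
\end{displaymath}
natural in $A$ (the natural transformation comes from a morphism of kernels). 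By the projection formula, $\Phi_{(\Delta_f)_\ast \mathcal{O}_Y}\cong \operatorname{id}$. Flat base change along $f$ gives $\Phi_{\mathcal{O}_{Y\times_S Y}}(A)\cong \mathbf{L}f^\ast \mathbf{R}f_\ast A$; this uses the flatness of $f$.

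The first step is to check that each of the three kernels is relatively perfect over both factors. The structure sheaf is clearly so. The diagonal $(\Delta_f)_\ast\mathcal{O}_Y$ pushes forward along $p_i$ to $\mathcal{O}_Y$, so it is relatively perfect, in the sense used in \cite[Corollary 1.9]{GuisadoVillaalgordo/Lank:2026}, since $p_i\circ\Delta_f=\operatorname{id}$. The ideal sheaf is then relatively perfect by two-out-of-three. Consequently all three transforms preserve $D^b_{\operatorname{coh}}$ and $\operatorname{Perf}$, and the triangle descends to a triangle of exact functors on $D_{\operatorname{sg}}(Y)$.

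The key step is to show that $A\mapsto \mathbf{L}f^\ast\mathbf{R}f_\ast A$ is zero on $D_{\operatorname{sg}}(Y)$. Take $A\in D^b_{\operatorname{coh}}(Y)$. Since $f$ is proper, $\mathbf{R}f_\ast A\in D^b_{\operatorname{coh}}(S)$, and regularity of $S$ gives $D^b_{\operatorname{coh}}(S)=\operatorname{Perf}(S)$, so $\mathbf{R}f_\ast A$ is perfect. Pullback preserves perfect complexes, so $\mathbf{L}f^\ast\mathbf{R}f_\ast A\in\operatorname{Perf}(Y)$ and hence vanishes in $D_{\operatorname{sg}}(Y)$. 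Here the issue of $S$ being quasi-compact is harmless: both perfectness and regularity are local, and $D^b_{\operatorname{coh}}=\operatorname{Perf}$ holds for regular Noetherian algebraic spaces by the cited \cite[Lemma 5.28]{GuisadoVillaalgordo/Lank:2026}. The triangle of functors then shows that $\operatorname{id}\to\Phi^{\operatorname{sg}}_{\mathcal{I}_{\Delta_f}}[1]$ is a natural isomorphism on $D_{\operatorname{sg}}(Y)$.

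Thus $\Phi^{\operatorname{sg}}_{\mathcal{I}_{\Delta_f}}\cong[-1]$, which is an autoequivalence. The main obstacle I expect is making the triangle of functors honest. Cones of natural transformations between triangulated functors are not functorial in general, so I would argue objectwise instead. The natural transformation $\Phi_{\mathcal{I}_{\Delta_f}}\to\Phi_{\mathcal{O}}$, induced by the kernel map, has cone naturally isomorphic to $\operatorname{id}$ on each object. Concretely, the connecting morphism $\operatorname{id}=\Phi_{\mathcal{O}_\Delta}\to\Phi_{\mathcal{I}_{\Delta_f}}[1]$ is itself a natural transformation, coming from the degree-one map of kernels $\mathcal{O}_\Delta\to\mathcal{I}_{\Delta_f}[1]$. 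Its cone at each $A$ is $\Phi_{\mathcal{O}}(A)[1]$, which is perfect, so in the Verdier quotient it is an isomorphism at every object. Therefore $\Phi^{\operatorname{sg}}_{\mathcal{I}_{\Delta_f}}$ is naturally isomorphic to the shift $[-1]$ and is an equivalence.
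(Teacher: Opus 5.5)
Your proposal is correct and follows essentially the same route as the paper: the kernel triangle from $0\to\mathcal{I}_{\Delta_f}\to\mathcal{O}_{Y\times_S Y}\to(\Delta_f)_\ast\mathcal{O}_Y\to 0$, the projection formula and flat base change to obtain the natural triangle, regularity of $S$ to make $\mathbf{L}f^\ast\mathbf{R}f_\ast A$ perfect, and hence $\Phi^{\operatorname{sg}}_{\mathcal{I}_{\Delta_f}}\cong[-1]$. The only differences are minor. You obtain relative perfectness of $\mathcal{I}_{\Delta_f}$ by two-out-of-three, where the paper cites \cite[Lemma 5.32]{GuisadoVillaalgordo/Lank:2026}. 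You also leave implicit, via the universal property of Verdier localization, the naturality of the components $\pi_Y(h_A)$ on $D_{\operatorname{sg}}(Y)$, which the paper verifies by hand with roofs.
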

 
Here no assumption is made on the singularities of the fibers of $f$.
While \Cref{prop:ideal_sheaf_diagonal} requires a regular base, we prove a complementary result for a singular base if the unit on the structure sheaf is an isomorphism.
 
\begin{proposition}
    \label{introprop:explicit_loci_diagonal}
    Let $S$ be a quasi-compact quasi-separated quasi-excellent algebraic space. 
    Suppose $f\colon Y \to S$ is a proper Gorenstein morphism of algebraic spaces such that the canonical morphism $\mathcal{O}_S \to \mathbf{R}f_\ast \mathcal{O}_Y$ is an isomorphism.
    Denote by $\mathcal{I}_{\Delta_f}$ the ideal sheaf of the diagonal $\Delta_f \colon Y \to Y\times_S Y$.
    Then:
    \begin{displaymath}
        \operatorname{fm}_{\operatorname{sg}}(\mathcal{I}_{\Delta_f}) = \operatorname{FM}_{\operatorname{sg}}(\mathcal{I}_{\Delta_f}) = \operatorname{reg}(S).
    \end{displaymath}
\end{proposition}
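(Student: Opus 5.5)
The plan is to compute both loci germwise and show that each equals the regular locus, using \Cref{prop:ideal_sheaf_diagonal} on regular germs and a direct obstruction on singular germs. Since $\operatorname{FM}_{\operatorname{sg}} \subseteq \operatorname{fm}_{\operatorname{sg}}$ by definition, it suffices to prove two inclusions: $\operatorname{reg}(S) \subseteq \operatorname{FM}_{\operatorname{sg}}(\mathcal{I}_{\Delta_f})$ and $\operatorname{fm}_{\operatorname{sg}}(\mathcal{I}_{\Delta_f}) \subseteq \operatorname{reg}(S)$.

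First we check that the hypotheses of \Cref{introthm:openness_singular_fm} hold, so the loci make sense and are independent of the germ. A proper Gorenstein morphism is flat. The sequence $0 \to \mathcal{I}_{\Delta_f} \to \mathcal{O}_{Y\times_S Y} \to \Delta_{f,\ast}\mathcal{O}_Y \to 0$ shows that $\mathcal{I}_{\Delta_f}$ is relatively perfect over each factor, since both $\mathcal{O}$ and $\Delta_{f,\ast}\mathcal{O}_Y$ are. Because $f$ is Gorenstein, $p_2^!\mathcal{O}_Y$ is a shifted line bundle, so $\mathbf{R}\mathcal{H}\!om(\mathcal{I}_{\Delta_f}, p_2^!\mathcal{O}_Y)$ is again relatively perfect by relative duality, as in \Cref{ex:perfect_kernels}.

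For $\operatorname{reg}(S) \subseteq \operatorname{FM}_{\operatorname{sg}}$, take $p$ regular and an étale germ $(U,u)$. Then $\operatorname{Spec}(\mathcal{O}_{U,u})$ is regular. The diagonal ideal is compatible with flat base change, so the base change of the transform is $\Phi_{\mathcal{I}_{\Delta}}$ for the base-changed family. By \Cref{prop:ideal_sheaf_diagonal} this is an autoequivalence of $D_{\operatorname{sg}}$.

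For the reverse inclusion, suppose $p \notin \operatorname{reg}(S)$ and pick any germ with local ring $R=\mathcal{O}_{U,u}$. Then $R$ is not regular, since regularity is étale local. Let $g\colon Y_R \to \operatorname{Spec} R$ be the base change. Flat base change preserves the isomorphism $R \to \mathbf{R}g_\ast\mathcal{O}_{Y_R}$. By \Cref{lem:diagonal_ideal_kills_pullbacks}, $\Phi_{\mathcal{I}_\Delta}(\mathbf{L}g^\ast M) \cong 0$ for every $M \in D^b_{\operatorname{coh}}(R)$. (The kernel triangle gives $\Phi_{\mathcal{I}}(A) \to \mathbf{L}g^\ast\mathbf{R}g_\ast A \to A$, and for $A = \mathbf{L}g^\ast M$ the second map is an isomorphism by the projection formula.) Take $M = k(u)$, the residue field. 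It is not perfect over $R$ because $R$ is singular. Since $g$ is flat and surjective, $\mathbf{L}g^\ast k(u)$ is not perfect on $Y_R$: perfection descends along faithfully flat morphisms, or alternatively $\mathbf{R}g_\ast \mathbf{L}g^\ast k \cong k$ would be perfect because $g$ is proper and perfect. Thus $\mathbf{L}g^\ast k$ is a nonzero object of $D_{\operatorname{sg}}(Y_R)$ sent to zero, so $\Phi^{\operatorname{sg}}$ is not faithful, let alone fully faithful, at $p$.

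The main obstacle is the descent of non-perfectness. The second argument is cleaner: $\mathbf{R}g_\ast$ preserves perfect complexes because $g$ is proper and flat of finite presentation, hence perfect. If $\mathbf{L}g^\ast k$ were perfect, then $k \cong \mathbf{R}g_\ast\mathbf{L}g^\ast k$ would be perfect over $R$, contradicting the singularity of $R$. One should also confirm that the base change of $\Phi^{\operatorname{sg}}$ along $\operatorname{Spec} R \to S$ is the transform with the pulled-back diagonal ideal. This follows from flatness of $Y\times_S Y$ over $S$, which makes $\mathcal{I}_{\Delta}$ stable under derived pullback.
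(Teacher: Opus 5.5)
Your proposal is correct and follows the paper's proof. Regular germs are handled by \Cref{prop:ideal_sheaf_diagonal}; on a singular germ you pull back a non-perfect complex (the residue field, where the paper takes an arbitrary non-perfect object), kill it with \Cref{lem:diagonal_ideal_kills_pullbacks}, and certify non-perfectness via $\mathbf{R}g_\ast\mathbf{L}g^\ast k\cong k$ and the fact that $\mathbf{R}g_\ast$ preserves perfect complexes, exactly as the paper does.

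The only divergence is in checking the setup for $\operatorname{\mathbf{R}\mathcal{H}\!\mathit{om}}(\mathcal{I}_{\Delta_f}, p_2^!\mathcal{O}_Y)$. There the argument of \Cref{ex:perfect_kernels} does not literally apply, since it needs a perfect kernel and $\mathcal{I}_{\Delta_f}$ is not perfect in general. So you are really invoking relative duality for relatively perfect complexes along $p_1$ and $p_2$, together with invertibility of the $p_i^!\mathcal{O}_Y$. The paper stays self-contained instead: it dualizes the triangle $\mathcal{I}_{\Delta_f}\to\mathcal{O}_{Y\times_S Y}\to(\Delta_f)_\ast\mathcal{O}_Y$ and identifies the right adjoint of $\Phi_{(\Delta_f)_\ast\mathcal{O}_Y}\cong 1$ with the identity.
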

 
See \Cref{prop:explicit_loci_diagonal}. 
Thus, the loci for the ideal sheaf of the diagonal are explicit, and they detect regularity of the base.
 
\begin{corollary}
    \label{cor:autoequivalence_detects_regularity}
    Let $S$ be a quasi-compact quasi-separated quasi-excellent algebraic space. 
    Then the following are equivalent:
    \begin{enumerate}
        \item $S$ is regular 
        \item there exists $f\colon Y \to S$ as in \Cref{introprop:explicit_loci_diagonal} such that $\Phi_{\mathcal{I}_{\Delta_f}}^{\operatorname{sg}}$ is an autoequivalence.
    \end{enumerate}
\end{corollary}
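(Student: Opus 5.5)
The plan is to deduce both implications from \Cref{introprop:explicit_loci_diagonal}, \Cref{introprop:germwise_criteria}, and one existence construction. For (2) $\Rightarrow$ (1): suppose $f\colon Y \to S$ is as in \Cref{introprop:explicit_loci_diagonal} and $\Phi_{\mathcal{I}_{\Delta_f}}^{\operatorname{sg}}$ is an autoequivalence. I will first check that $\mathcal{I}_{\Delta_f}$ satisfies the hypotheses of \Cref{introthm:openness_singular_fm}. Since $f$ is Gorenstein and flat, the diagonal $\Delta_f$ is a closed immersion, and $\Delta_{f,\ast}\mathcal{O}_Y$ is relatively perfect over each factor: it is flat over $Y$ via either projection and its fibers are points. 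The triangle $\mathcal{I}_{\Delta_f} \to \mathcal{O}_{Y\times_S Y} \to \Delta_{f,\ast}\mathcal{O}_Y$ then shows that $\mathcal{I}_{\Delta_f}$ is relatively perfect. The dual $\mathbf{R}\mathcal{H}om(\mathcal{I}_{\Delta_f}, p_2^!\mathcal{O}_Y)$ is handled the same way, because $p_2^!\mathcal{O}_Y$ is an invertible shift when $f$ is Gorenstein. This is presumably exactly what the paper checks inside \Cref{introprop:explicit_loci_diagonal}, so I may simply cite it.

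With the hypotheses in place, \Cref{introprop:germwise_criteria} shows that $\Phi^{\operatorname{sg}}$ is an equivalence at every $p\in|S|$, so $\operatorname{FM}_{\operatorname{sg}}(\mathcal{I}_{\Delta_f}) = |S|$. \Cref{introprop:explicit_loci_diagonal} then gives $\operatorname{reg}(S) = |S|$, i.e.\ $S$ is regular. Even without the germwise criterion, I would argue directly that a global equivalence restricts to an equivalence on étale germs, but citing \Cref{introprop:germwise_criteria}, (1) $\Rightarrow$ (2), is cleanest.

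For (1) $\Rightarrow$ (2), I need to exhibit some $f$ satisfying the hypotheses. The natural candidate is $f = \operatorname{id}_S$, or, to avoid degeneracy, $Y = \mathbf{P}^1_S$. The identity is proper, flat and Gorenstein, and $\mathcal{O}_S \to \mathbf{R}f_\ast\mathcal{O}_S$ is an isomorphism. With $S$ regular, \Cref{introprop:explicit_loci_diagonal} gives $\operatorname{FM}_{\operatorname{sg}} = |S|$, and the germwise criterion then yields a global equivalence. Alternatively, one can invoke \Cref{prop:ideal_sheaf_diagonal} directly, since $S$ is regular; this also gives the conclusion even without the germwise criterion. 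For $f=\operatorname{id}$ the diagonal ideal is $0$ and $D_{\operatorname{sg}}(S)=0$, so this is trivially fine.

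If one insists on a nondegenerate example, use $\mathbf{P}^1_S \to S$. This map is smooth, hence Gorenstein, and satisfies $\mathbf{R}f_\ast\mathcal{O} = \mathcal{O}_S$ by the cohomology of projective space, which commutes with flat base change. The main potential obstacle is ensuring that the hypotheses of \Cref{introprop:explicit_loci_diagonal} implicitly include the relative perfectness conditions needed for \Cref{introprop:germwise_criteria}. I will verify these through the diagonal triangle as above, using that $\Delta_f$ is a regular immersion when $f$ is smooth, so that $\mathcal{I}_{\Delta_f}$ is even perfect in that case.
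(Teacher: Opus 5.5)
Your proposal is correct and follows the paper's proof: the paper takes $f = 1_S$ for (1) $\Rightarrow$ (2), and for the converse combines \Cref{introprop:explicit_loci_diagonal} (which already verifies \Cref{setup:relatively_perfect_adjoint_pair} for $\mathcal{I}_{\Delta_f}$, so citing it is enough) with \Cref{introprop:germwise_criteria} to get $\operatorname{reg}(S) = \operatorname{FM}_{\operatorname{sg}}(\mathcal{I}_{\Delta_f}) = |S|$. One small slip in your side remark: $\Delta_f$ is a closed immersion because $f$ is proper, hence separated, not because $f$ is Gorenstein and flat.
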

 
\subsection{Arithmetic families}
\label{sec:intro_arithmetic}
 
In \Cref{sec:arithmetic_families}, we study the loci in arithmetic families.
Under the hypotheses of \Cref{introthm:openness_singular_fm}, let $S$ be an integral Noetherian scheme of Krull dimension one.
If $\Phi_E^{\operatorname{sg}}$ is fully faithful (resp.\ an equivalence) at the generic point, then it is fully faithful (resp.\ an equivalence) at all but finitely many closed points.
See \Cref{cor:dimension_one_base}.
 
In \Cref{ex:arithmetic_conic_bundle}, we give a conic bundle $f\colon Y\to\operatorname{Spec}(\mathbb{Z}[\sqrt{-3}])$ as in \Cref{introprop:explicit_loci_diagonal}, where both loci are the complement of the unique prime $\mathfrak{p}_2$ over $2$.
The fiber of $f$ over $\mathfrak{p}_2$ is a regular conic.
Hence, the failure at $\mathfrak{p}_2$ is not visible on fibers and comes from the singularity of the base.
Moreover, the loci contain the prime $(5)$, over which $Y$ has an $A_1$-singularity of the form $xy=5^2$.
Hence, the loci are strictly larger than the open subset of points over which $Y$ is regular.
See \Cref{cor:regular_total_spaces}.
In contrast, the loci of \cite[Theorem 1.3]{GuisadoVillaalgordo/Lank:2026}, where $\Phi_{\mathcal{I}_{\Delta_f}}$ is fiberwise fully faithful or an equivalence on $D^b_{\operatorname{coh}}$, are empty.
 
\subsection{Organization}
\label{sec:intro_organization}
 
\Cref{sec:preliminaries} recalls background on generation and algebraic spaces.
\Cref{sec:generators} studies classical generators for bounded derived categories of Noetherian algebraic spaces.
\Cref{sec:auxiliary} collects results on integral transforms between singularity categories.
\Cref{sec:germs} introduces the loci $\operatorname{fm}_{\operatorname{sg}}(E)$ and $\operatorname{FM}_{\operatorname{sg}}(E)$.
\Cref{sec:opennes_base_change} proves the main results.
\Cref{sec:examples} discusses examples, and proves \Cref{prop:ideal_sheaf_diagonal,introprop:explicit_loci_diagonal} and \Cref{cor:autoequivalence_detects_regularity}.
\Cref{sec:arithmetic_families} computes the loci in arithmetic families.

\begin{ack}
    The author appreciates discussions with El\'{i}as {Guisado Villalgordo}, Paolo Stellari, and Fei Peng on topics relevant to this work.
    The author was supported by the ERC Advanced Grant 101095900-TriCatApp.
\end{ack}

\paragraph*{\textbf{AI disclosure}}

The author used large language models (LLMs) for assistance with proofreading (e.g.\ grammar, notation, exposition) and checking proofs (e.g.\ finding gaps, pointing out errors).
The main results and methods are due to the author.

\section{Preliminaries}
\label{sec:preliminaries}

In our work, a `natural' morphism refers to something which is functorial, whereas a `canonical' morphism refers to being constructed and not necessarily functorial.

\subsection{Generation}
\label{sec:preliminaries_generation}

We discuss a form of generation for triangulated categories. 
See \cite{Bondal/VandenBergh:2003}. 
Let $\mathcal{T}$ be a triangulated category with shift functor $[1]\colon \mathcal{T} \to \mathcal{T}$. 
Fix a subcategory $\mathcal{S} \subseteq \mathcal{T}$. 
We say that $\mathcal{S}$ is \textbf{thick} if it is a triangulated subcategory which is closed under direct summands. 
Set $\langle \mathcal{S} \rangle$ to be the smallest thick subcategory of $\mathcal{T}$ containing $\mathcal{S}$. 
Write $\langle G \rangle := \langle \mathcal{S} \rangle$ if $\mathcal{S}$ consists of a single object $G$. 
Denote by $\operatorname{add}( \mathcal{S})$ the smallest strictly full subcategory of $\mathcal{T}$ containing $\mathcal{S}$ that is closed under shifts, finite coproducts, and direct summands. 
Inductively, define $\langle  \mathcal{S} \rangle_0$ to consist of all zero objects, $\langle \mathcal{S} \rangle_1 := \operatorname{add}(\mathcal{S})$, and 
\begin{displaymath}
    \langle  \mathcal{S} \rangle_n := \operatorname{add} \{ \operatorname{cone}(\phi) \mid \phi \in \operatorname{Hom}_{\mathcal{T}} (\langle  \mathcal{S} \rangle_{n-1}, \langle  \mathcal{S}\rangle_1) \}.
\end{displaymath}
It can be verified that $\langle  \mathcal{S} \rangle = \cup_{n=0}^\infty \langle \mathcal{S} \rangle_n$. 
If $E\in \langle  \mathcal{S}\rangle$, we say $E$ is \textbf{finitely built by $\mathcal{S}$}.
A \textbf{classical generator} of $\mathcal{T}$ is an object $G\in \mathcal{T}$ such that $\langle G \rangle =  \mathcal{T}$.
Recall that an exact functor $F\colon \mathcal{T} \to \mathcal{T}^\prime$ between triangulated categories is \textbf{essentially dense} if for every $S\in \mathcal{T}^\prime$ there exists $T\in \mathcal{T}$ such that $S\in \langle F(T) \rangle_1$.

\subsection{Algebraic spaces}
\label{sec:preliminaries_stacks}

We follow \cite{StacksProject}. 
Throughout, we work over a fixed base scheme. 
Let $X$ be a quasi-compact quasi-separated algebraic space. 

\subsubsection{Notions}
\label{sec:prelim_notions}

An \textbf{\'{e}tale presentation} of $X$ is an \'{e}tale, finitely presented, surjective morphism to $X$ from a scheme. 
Denote by $|X|$ the underlying topological space of $X$. 
It consists of equivalence classes of morphisms from fields to $X$ \cite[\href{https://stacks.math.columbia.edu/tag/03BT}{Tag 03BT}]{StacksProject}. 
A morphism from a field to a quasi-separated algebraic space is an affine morphism \cite[\href{https://stacks.math.columbia.edu/tag/09TF}{Tag 09TF}]{StacksProject}.
Moreover, as $X$ is quasi-separated, a morphism to $X$ from a quasi-affine scheme is quasi-affine \cite[Lemma 2.18]{GuisadoVillaalgordo/Lank:2026}.

\subsubsection{Categories \& functors}
\label{sec:prelim_categories}

$\operatorname{Mod}(X)$ is the Grothendieck abelian category of sheaves of $\mathcal{O}_X$-modules on the small \'{e}tale site $X_{\textrm{\'{e}tale}}$ of $X$ \cite[\href{https://stacks.math.columbia.edu/tag/03LT}{Tag 03LT}]{StacksProject}. 
$\operatorname{Qcoh}(X)$ is the full subcategory of $\operatorname{Mod}(X)$ consisting of quasi-coherent sheaves. 
$D(X) := D(\operatorname{Mod}(X))$ is the derived category of $\operatorname{Mod}(X)$. 
$D_{\operatorname{qc}}(X)$ is the full subcategory of $D(X)$ consisting of complexes with quasi-coherent cohomology sheaves. 
$\operatorname{Perf}(X)$ is the full subcategory of perfect complexes in $D_{\operatorname{qc}}(X)$. 
If $X$ is Noetherian, $\operatorname{coh}(X)$ is the full subcategory of $\operatorname{Mod}(X)$ consisting of coherent sheaves and $D^b_{\operatorname{coh}}(X)$ denotes the full subcategory of $D(X)$ consisting of bounded pseudocoherent complexes.

Let $f\colon Y \to X$ be a morphism of quasi-compact quasi-separated algebraic spaces. 
By \cite[\href{https://stacks.math.columbia.edu/tag/07A6}{Tag 07A6}]{StacksProject}, there exists an adjoint pair 
\begin{displaymath}
    \mathbf{L}f^\ast \colon D(X) \leftrightarrows D(Y) \colon \mathbf{R}f_\ast.
\end{displaymath}
By \cite[\href{https://stacks.math.columbia.edu/tag/08FA}{Tags 08FA} \& \href{https://stacks.math.columbia.edu/tag/08F4}{08F4}]{StacksProject}, $\mathbf{L}f^\ast$ and $\mathbf{R}f_\ast$ preserve quasi-coherent cohomology.
Moreover, \cite[\href{https://stacks.math.columbia.edu/tag/07A6}{Tag 07A6}]{StacksProject} says the restrictions of $\mathbf{L}f^\ast$ and $\mathbf{R}f_\ast$ form an adjoint pair on $D_{\operatorname{qc}}$. 
Applying \cite[\href{https://stacks.math.columbia.edu/tag/07A4}{Tag 07A4}]{StacksProject}, $\mathbf{L}f^\ast$ is monoidal. 
By \cite[\href{https://stacks.math.columbia.edu/tag/0E55}{Tag 0E55}]{StacksProject}, $\mathbf{R}f_\ast$ admits a right adjoint $f^\times$ on $D_{\operatorname{qc}}$.
If $f\colon Y \to X$ is a proper morphism of Noetherian algebraic spaces, then the so-called `upper shriek' functor $f^!$ on $D_{\operatorname{qc}}$ exists. 
See \cite{Neeman:2023} and \cite[Appendix C]{GuisadoVillaalgordo/Lank:2026}. (Due to Neeman's improvement, $f^!$ is defined on all of $D_{\operatorname{qc}}$, and not only on $D^+_{\operatorname{qc}}$.)

\subsubsection{Support}
\label{sec:preliminaries_stacks_support}

For $M \in \operatorname{Qcoh}(X)$, $\operatorname{Supp}(M) :=|p|(\operatorname{Supp}(p^\ast M)) \subseteq |X|$ where $p \colon U \to X$ is any \'{e}tale presentation from a scheme.
By \cite[\href{https://stacks.math.columbia.edu/tag/07TY}{Tag 07TY}]{StacksProject}, $\operatorname{Supp}$ is independent of the \'{e}tale presentation. 
It coincides with the notion of support for abelian sheaves on the small \'{e}tale site \cite[\href{https://stacks.math.columbia.edu/tag/04K7}{Tag 04K7}]{StacksProject}.
More generally, if $E \in D_{\operatorname{qc}}(X)$, 
\begin{displaymath}
    \operatorname{Supp}(E) := \bigcup_{j \in \mathbb{Z}} \operatorname{Supp}(\mathcal{H}^j(E)) \subseteq |X|.
\end{displaymath}
Given a closed subset $Z \subseteq |X|$, $E\in D_{\operatorname{qc}}(X)$ is \textbf{supported on $Z$} if $\operatorname{Supp}(E) \subseteq Z$. 
If $E\in \operatorname{Qcoh}(X)$ and is of finite type, then $\operatorname{Supp}(E)\subseteq |X|$ is closed \cite[\href{https://stacks.math.columbia.edu/tag/07TZ}{Tag 07TZ}]{StacksProject}. 
Denote by $D_{\operatorname{qc},Z}(X)$ the strictly full subcategory of $D_{\operatorname{qc}}(X)$ consisting of objects supported on $Z$. 
If $X$ is locally Noetherian, quasi-coherent sheaves of finite type coincide with coherent sheaves \cite[\href{https://stacks.math.columbia.edu/tag/07UB}{Tag 07UB}]{StacksProject}.
If $X$ is Noetherian, then $D^b_{\operatorname{coh},Z}(X) := D^b_{\operatorname{coh}}(X) \cap D_{\operatorname{qc},Z}(X)$.

\subsubsection{Perfect complexes}
\label{sec:prelim_perfects}

Perfect complexes are defined on the small \'{e}tale site of $X$. 
See \cite[\href{https://stacks.math.columbia.edu/tag/08G4}{Tag 08G4}]{StacksProject}. 
In particular, a complex is \textbf{strictly perfect} if it is a bounded complex whose terms are direct summands of finite free modules, whereas a complex is \textbf{perfect} if it is locally strictly perfect. 
For any $Z\subseteq |X|$ closed such that $|X|\setminus Z$ is quasi-compact: perfect complexes of $D_{\operatorname{qc},Z}(X)$ coincide with compacts of $D_{\operatorname{qc},Z}(X)$ and $D_{\operatorname{qc},Z}(X)$ is singly compactly generated \cite[\href{https://stacks.math.columbia.edu/tag/0AED}{Tags 0AED}, \href{https://stacks.math.columbia.edu/tag/09M8}{09M8}, \& \href{https://stacks.math.columbia.edu/tag/0AEC}{0AEC}]{StacksProject}.

\subsubsection{Internal homs}
\label{sec:prelim_internal_homs}

Fix $E,G\in D(X)$. 
The derived tensor product $E\otimes^{\mathbf{L}} G$ and the derived sheaf Hom $\operatorname{\mathbb{R}\mathcal{H}\! \mathit{om}}(E,G)$ are objects of $D(X)$. 
Here $(-)\otimes^{\mathbf{L}} E$ is left adjoint to $\operatorname{\mathbb{R}\mathcal{H}\! \mathit{om}}(E,-)$ on $D(X)$. 
By \cite[\href{https://stacks.math.columbia.edu/tag/08F5}{Tags 08F5} \& \href{https://stacks.math.columbia.edu/tag/0FPQ}{0FPQ}]{StacksProject}, $D_{\operatorname{qc}}$ is a symmetric monoidal category with $\otimes^{\mathbf{L}}$, i.e.\ $E\otimes^{\mathbf{L}} G \in D_{\operatorname{qc}}(X)$ for all $E,G\in D_{\operatorname{qc}}(X)$. 
On the other hand, $\operatorname{\mathbb{R}\mathcal{H}\! \mathit{om}}(E,G)$ need not have quasi-coherent cohomology even if $E$ and $G$ belong to $D_{\operatorname{qc}}(X)$. 
However, the formation of $\operatorname{\mathbb{R}\mathcal{H}\! \mathit{om}}(E,-)$ is \'{e}tale local \cite[\href{https://stacks.math.columbia.edu/tag/04LX}{Tags 04LX} \& \href{https://stacks.math.columbia.edu/tag/08JB}{08JB}]{StacksProject}. 
The endofunctor $(-)\otimes^{\mathbf{L}} E$ on $D_{\operatorname{qc}}(X)$ preserves small coproducts, and so \cite[Theorem 8.4.4]{Neeman:2001} implies this functor admits a right adjoint $\operatorname{\mathbf{R}\mathcal{H}\! \mathit{om}}(E,-)$ on $D_{\operatorname{qc}}(X)$. 
See \cite[\S 2.3.5]{GuisadoVillaalgordo/Lank:2026}.

\section{Generators}
\label{sec:generators}

\begin{remark}
    We prove a straightforward variation of \cite[Theorem 1.1]{Dey/Lank:2024a} on the small \'{e}tale site for Noetherian algebraic spaces. 
    It is included for convenience.
\end{remark}

\begin{lemma}
    \label{lem:reduced_subscheme_generation}
    Let $X$ be a Noetherian algebraic space.
    Suppose $i\colon Z \to X$ is a closed immersion.
    Then $D^b_{\operatorname{coh},|i|(|Z|)}(X) = \langle \mathbf{R}i_\ast D^b_{\operatorname{coh}}(Z) \rangle$. 
\end{lemma}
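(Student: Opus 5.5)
We prove the two inclusions separately. The inclusion $\supseteq$ is formal. Since $i$ is a closed immersion, it is finite, so $\mathbf{R}i_\ast = i_\ast$ is exact and preserves coherence. Hence $\mathbf{R}i_\ast D^b_{\operatorname{coh}}(Z) \subseteq D^b_{\operatorname{coh}}(X)$. Support can be computed on an étale presentation, so each $\mathcal{H}^j(i_\ast A) = i_\ast \mathcal{H}^j(A)$ is supported on $|i|(|Z|)$. Finally, $D^b_{\operatorname{coh},|i|(|Z|)}(X)$ is a thick subcategory: it is closed under shifts and summands, and under cones by the long exact sequence in cohomology together with closedness of support of coherent sheaves in extensions.

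For $\subseteq$, let $E \in D^b_{\operatorname{coh},|i|(|Z|)}(X)$. Truncating along the standard filtration, $E$ is finitely built from its cohomology sheaves $\mathcal{H}^j(E)[-j]$, of which only finitely many are nonzero. It therefore suffices to treat a coherent sheaf $M$ with $\operatorname{Supp}(M) \subseteq |i|(|Z|)$. Let $\mathcal{I} \subseteq \mathcal{O}_X$ be the quasi-coherent ideal of $Z$. The key step is that $\mathcal{I}^n M = 0$ for some $n \geq 1$. Granting this, the filtration
\begin{displaymath}
    M \supseteq \mathcal{I}M \supseteq \cdots \supseteq \mathcal{I}^n M = 0
\end{displaymath}
has graded pieces $\mathcal{I}^k M / \mathcal{I}^{k+1} M$. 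These are coherent and killed by $\mathcal{I}$, so each is of the form $i_\ast N_k$ with $N_k \in \operatorname{coh}(Z)$, by the equivalence between quasi-coherent $\mathcal{O}_X$-modules annihilated by $\mathcal{I}$ and $\operatorname{Qcoh}(Z)$ for closed immersions of algebraic spaces (Stacks Project). The short exact sequences $0\to \mathcal{I}^{k+1}M \to \mathcal{I}^kM \to \mathcal{I}^kM/\mathcal{I}^{k+1}M \to 0$ then give distinguished triangles. Induction on $n$ places $M$ in $\langle \mathbf{R}i_\ast D^b_{\operatorname{coh}}(Z)\rangle$.

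The main obstacle is proving nilpotence of $\mathcal{I}$ on $M$ in the algebraic space setting. Choose an étale presentation $p\colon U \to X$ with $U$ an affine Noetherian scheme; this is possible since $X$ is quasi-compact. Then $p^\ast M$ is a coherent module $\widetilde{N}$ over a Noetherian ring $A$, and $p^\ast\mathcal{I}$ corresponds to an ideal $I$ cutting out $Z\times_X U$. The support of $N$ is contained in $V(I)$, by the definition of support via presentations and because $|p|^{-1}(|i|(|Z|)) = |Z\times_X U|$. Since $N$ is finitely generated, $\operatorname{Supp}(N) = V(\operatorname{Ann} N) \subseteq V(I)$ gives $I \subseteq \sqrt{\operatorname{Ann} N}$, and $I$ is finitely generated, so $I^n N = 0$ for some $n$. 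Because $p$ is flat, $p^\ast(\mathcal{I}^n M)$ is the image of $(p^\ast\mathcal{I})^{\otimes n}\otimes p^\ast M$, that is, $I^nN = 0$. As $p$ is faithfully flat and surjective, $\mathcal{I}^n M = 0$ on $X$. This completes the reduction and hence the proof.
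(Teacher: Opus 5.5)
Your proposal is correct and follows essentially the same route as the paper: reduce to a coherent sheaf $M$ supported on $|i|(|Z|)$, show $\mathcal{I}^n M = 0$, and use the $\mathcal{I}$-adic filtration, whose graded pieces are pushforwards of coherent sheaves on $Z$. The only differences are that you prove the nilpotence by hand on an affine \'{e}tale presentation, where the paper cites the Stacks Project, and that you check the easy inclusion $\supseteq$ explicitly, which the paper leaves implicit.
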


\begin{proof}
    Let $\mathcal{I}$ be the corresponding ideal sheaf of $i\colon Z \to X$.
    By \cite[Lemma 3.14]{GuisadoVillaalgordo/Lank:2026}, every $E\in D^b_{\operatorname{coh},|i|(|Z|)}(X)$ is finitely built by its cohomology sheaves.
    Hence, it suffices to show that every coherent sheaf $E$ supported on $|i|(|Z|)$ belongs to $\langle \mathbf{R}i_\ast D^b_{\operatorname{coh}}(Z) \rangle$.
    By \cite[\href{https://stacks.math.columbia.edu/tag/07UK}{Tag 07UK}]{StacksProject}, there exists an $n\geq 0$ such that $\mathcal{I}^n E\cong 0$. 
    This yields a filtration
    \begin{displaymath}
        0 \cong \mathcal{I}^n E \subseteq \mathcal{I}^{n-1}E \subseteq \cdots \subseteq \mathcal{I}E \subseteq E.
    \end{displaymath}
    Here each subquotient $\mathcal{I}^{j-1} E/ \mathcal{I}^j E$ is annihilated by $\mathcal{I}$. 
    Consequently, by \cite[\href{https://stacks.math.columbia.edu/tag/08AM}{Tags 08AM} \& \href{https://stacks.math.columbia.edu/tag/04CJ}{04CJ}]{StacksProject}, each subquotient belongs to $\langle \mathbf{R}i_\ast D^b_{\operatorname{coh}}(Z) \rangle$.
\end{proof}

\begin{lemma}
    \label{lem:classical_generation_from_irreducible_components}
    Let $X$ be a Noetherian algebraic space. 
    Suppose $|X| = \cup_{r=1}^n |i_r|(|Z_r|)$ where $i_r\colon Z_r \hookrightarrow X$ are closed immersions such that each $D^b_{\operatorname{coh}}(Z_r)$ admits a classical generator $G_r$. 
    Then $D^b_{\operatorname{coh}}(X)$ is classically generated by $G:=\oplus_{r=1}^n \mathbf{R} (i_r)_\ast G_r$.
\end{lemma}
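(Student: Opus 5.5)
The plan is an induction on $n$, using \Cref{lem:reduced_subscheme_generation} to control objects supported on a closed subset and a localization sequence to glue. Because each $G_r$ classically generates $D^b_{\operatorname{coh}}(Z_r)$ and $\mathbf{R}(i_r)_\ast$ is exact, we get $\langle \mathbf{R}(i_r)_\ast D^b_{\operatorname{coh}}(Z_r)\rangle = \langle \mathbf{R}(i_r)_\ast G_r\rangle \subseteq \langle G\rangle$. By \Cref{lem:reduced_subscheme_generation}, $\langle G \rangle$ therefore contains $D^b_{\operatorname{coh},|i_r|(|Z_r|)}(X)$ for every $r$. It thus suffices to show that $D^b_{\operatorname{coh}}(X)$ is the thick closure of the subcategories $D^b_{\operatorname{coh},W_r}(X)$, where $W_r := |i_r|(|Z_r|)$.

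First reduce to coherent sheaves: by \cite[Lemma 3.14]{GuisadoVillaalgordo/Lank:2026} (applied with support all of $|X|$), every object of $D^b_{\operatorname{coh}}(X)$ is finitely built by its cohomology sheaves. So fix a coherent sheaf $F$ and induct on $n$; the case $n=1$ is \Cref{lem:reduced_subscheme_generation} since $W_1=|X|$. For the inductive step, give $W_1$ a closed subspace structure, say via the ideal $\mathcal{I}$ of $Z_1$. Consider $F' := \mathcal{H}^0_{W_1}(F)$, the subsheaf of sections supported on $W_1$, which is coherent because $X$ is Noetherian. It fits in a short exact sequence $0\to F'\to F\to F''\to 0$, where $F'$ is supported on $W_1$ and $F''$ has no nonzero sections supported on $W_1$.

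It remains to handle $F''$. Alternatively, one can avoid torsion subsheaves and use a cleaner trick: choose $N$ large and consider $0 \to \mathcal{I}^N F \to F \to F/\mathcal{I}^N F \to 0$. The quotient is supported on $W_1$. By Artin--Rees-type arguments, the sheaf $\mathcal{I}^N F$ agrees with $F$ away from $W_1$ and has support contained in $\overline{\operatorname{Supp}(F)\setminus W_1} \cup (W_1\cap\operatorname{Supp}F)$. That alone does not shrink the support, so the torsion approach is better.

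For $F''$: its associated points all lie outside $W_1$, so $\operatorname{Supp}(F'')$ is the closure of $\operatorname{Supp}(F'')\setminus W_1$. That set lies in $\bigcup_{r\ge 2} W_r$, which is closed, so $\operatorname{Supp}(F'')\subseteq \bigcup_{r\geq 2}W_r$. Now $F''$ is supported on the closed subset $W' = \bigcup_{r\ge 2} W_r$. Endow $W'$ with the reduced induced structure $j\colon Z'\to X$. By \Cref{lem:reduced_subscheme_generation}, $F''$ lies in $\langle \mathbf{R}j_\ast D^b_{\operatorname{coh}}(Z')\rangle$. On $Z'$, the closed immersions $Z_r\times_X Z' \to Z'$ do not obviously have classically generated categories, so rather than inducting on $Z'$ I would induct directly with a support statement. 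The claim is: every coherent sheaf supported in $\bigcup_{r\in I} W_r$ lies in $\langle G\rangle$, proved by induction on $|I|$, with the step above using only $W_1$-torsion subsheaves and \Cref{lem:reduced_subscheme_generation} for $W_1$.

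The main obstacle is the associated-points argument: on an algebraic space one must check that $\mathcal{H}^0_{W_1}(F)$ is coherent and that $F/\mathcal{H}^0_{W_1}(F)$ has support equal to the closure of its restriction to the complement. I would verify both étale locally on a presentation $U\to X$, where they are standard commutative algebra facts about $\Gamma_I(M)$ for finite modules over Noetherian rings. Both formations commute with étale (flat) base change, so they descend to $X$ via \cite[\href{https://stacks.math.columbia.edu/tag/07TY}{Tag 07TY}]{StacksProject}.
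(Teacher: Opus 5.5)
Your argument is correct, but it is organized differently from the paper's proof.

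\textbf{The paper's route.} The paper inducts on $n$ by changing the ambient space. It takes $Z'$ to be the scheme-theoretic union of $Z_1,\dots,Z_{n-1}$, so that each $i_r$ with $r\le n-1$ factors through $Z'$ and the induction hypothesis applies on $Z'$ with the same $G_r$. It then pushes this forward using \Cref{lem:reduced_subscheme_generation} to get $D^b_{\operatorname{coh},|Z'|}(X)\subseteq\langle G\rangle$. Finally, it splits an arbitrary coherent sheaf into a piece supported on $|Z'|$ and a piece supported on $|Z_n|$ by quoting the d\'evissage lemma \cite[Tag 07UP]{StacksProject}.

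\textbf{Your route.} You never leave $X$. You first get $D^b_{\operatorname{coh},W_r}(X)\subseteq\langle G\rangle$ for every $r$ at once. You then prove a purely support-theoretic claim by induction on $|I|$: coherent sheaves supported in $\bigcup_{r\in I}W_r$ lie in $\langle G\rangle$.

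\textbf{Comparison.} Your route avoids the problem you spotted: the reduced structure on $W'$ need not contain the possibly nonreduced $Z_r$, which is exactly why the paper uses a scheme-theoretic union. It also isolates the fact that only the supports $W_r$ matter. The paper's route is shorter because the splitting step is quoted. Your \'etale-local verification of the torsion splitting goes through as you sketch it:
\begin{itemize}
\item $\Gamma_I(M)=\varinjlim_k\operatorname{Hom}(A/I^k,M)$ commutes with flat base change;
\item $M/\Gamma_I(M)$ has no associated primes in $V(I)$, since $\Gamma_I(M)$ is killed by a power of $I$;
\item supports descend along an \'etale presentation.
\end{itemize}
So there is no gap.

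\textbf{A correction.} Your reason for discarding the sequence $0\to\mathcal{I}^NF\to F\to F/\mathcal{I}^NF\to 0$ is mistaken, and this is the standard proof of the d\'evissage lemma the paper cites. Since $\operatorname{Supp}(F)\subseteq W_1\cup W'$, the restriction of $F$ to the open subspace with underlying set $|X|\setminus W'$ (which is Noetherian) is supported on $W_1=V(\mathcal{I})$. Hence it is killed there by some power of $\mathcal{I}$ \cite[Tag 07UK]{StacksProject}. So for $N\gg0$ one gets $\operatorname{Supp}(\mathcal{I}^NF)\subseteq W'$ and $\operatorname{Supp}(F/\mathcal{I}^NF)\subseteq W_1$. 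The points of $W_1\cap\operatorname{Supp}(F)$ lying outside $W'$ are exactly the ones this kills, which your estimate overlooked. Using this sequence would remove what you called the main obstacle: you would need neither associated points nor coherence of $\mathcal{H}^0_{W_1}(F)$.

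In a write-up, also drop the detour through $\langle\mathbf{R}j_\ast D^b_{\operatorname{coh}}(Z')\rangle$, which you end up not using.
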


\begin{proof}
    We induct on $n\geq 1$.
    The case $n=1$ follows from \Cref{lem:reduced_subscheme_generation}.
    Assume the claim holds for Noetherian algebraic spaces which admit a closed cover given by at most $n-1$ closed immersions.
    Let $Z^\prime$ be the scheme-theoretic union of $Z_1,\ldots,Z_{n-1}$.
    For each $r=1,\ldots,n-1$, the morphism $i_r \colon Z_r \to X$ factors through the closed immersion $i^\prime \colon Z^\prime \to X$ by some closed immersion $i^\prime_r \colon Z_r \to Z^\prime$. 
    Note that $i_n$ need not necessarily factor through $i^\prime$.
    See \cite[\href{https://stacks.math.columbia.edu/tag/0CZ1}{Tags 0CZ1} \& \href{https://stacks.math.columbia.edu/tag/0CYZ}{0CYZ}]{StacksProject}.
    The induction hypothesis applies to $Z^\prime$, and so,
    \begin{displaymath}
        D^b_{\operatorname{coh}}(Z^\prime) = \langle \bigoplus_{r=1}^{n-1} \mathbf{R} (i^\prime_r)_\ast G_r \rangle.
    \end{displaymath}
    By \Cref{lem:reduced_subscheme_generation},
    \begin{displaymath}
        D^b_{\operatorname{coh},|i^\prime|(|Z^\prime|)}(X) = \langle \bigoplus_{r=1}^{n-1} \mathbf{R} (i_r)_\ast G_r \rangle.
    \end{displaymath}
    Choose $E\in D^b_{\operatorname{coh}}(X)$. 
    By \cite[Lemma 3.14]{GuisadoVillaalgordo/Lank:2026}, any object in $D^b_{\operatorname{coh}}(X)$ is finitely built by its cohomology sheaves.
    Hence, we can assume that $E$ is a coherent sheaf. 
    Applying \cite[\href{https://stacks.math.columbia.edu/tag/07UP}{Tag 07UP}]{StacksProject}, there exists a short exact sequence 
    \begin{displaymath}
        0 \to E_1 \to E \to E_2 \to 0
    \end{displaymath}
    where $\operatorname{Supp}(E_1) \subseteq |i^\prime| (|Z^\prime|)$ and $\operatorname{Supp}(E_2) \subseteq |i_n| (|Z_n|)$. 
    Hence, we may further impose that $E\in D^b_{\operatorname{coh},|i^\prime| (|Z^\prime|)}(X)$ or $E\in D^b_{\operatorname{coh},|i_n| (|Z_n|)}(X)$.
    By \Cref{lem:reduced_subscheme_generation}, $E\in \langle G \rangle$.
\end{proof}

\begin{definition}
    Let $X$ be a locally Noetherian algebraic space.
    Denote by $\operatorname{reg}(X)$ the collection of $p\in |X|$ for which $X$ is regular at $p$ (see \cite[\href{https://stacks.math.columbia.edu/tag/0AH9}{Tag 0AH9}]{StacksProject}).
    We say that $X$ is
    \begin{enumerate}
        \item \textbf{$J\textrm{-}0$} if the regular locus $\operatorname{reg}(X)$ contains a nonempty open;
        \item \textbf{$J\textrm{-}1$} if the regular locus $\operatorname{reg}(X)$ is open;
        \item \textbf{$J\textrm{-}2$} if for every morphism $Y\to X$ which is locally of finite presentation from a scheme $Y$, the regular locus $\operatorname{reg}(Y)$ is open.
    \end{enumerate}
\end{definition}

\begin{remark}
    \label{rmk:regular_locus_by_presentation}
    Let $X$ be a Noetherian algebraic space.
    Suppose $s\colon U \to X$ is an \'{e}tale presentation.
    Applying \cite[\href{https://stacks.math.columbia.edu/tag/0AH9}{Tag 0AH9}]{StacksProject}, $X$ is regular at a point $p\in |X|$ if, and only if, $U$ is regular at some (equivalently, every) point of $|s|^{-1}(p)$.
    Hence, $|s|(\operatorname{reg}(U))=\operatorname{reg}(X)$.
\end{remark}

\begin{lemma}
    \label{lem:existence_classical_generator_implies_open_cohomological_regular_locus}
    Let $X$ be a Noetherian algebraic space. 
    If $D^b_{\operatorname{coh}}(X)$ admits a classical generator, then $\operatorname{reg}(X)$ is open.
\end{lemma}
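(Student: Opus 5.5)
The strategy is to describe the singular locus as the support of a single coherent sheaf built from the generator. Let $G$ be a classical generator of $D^b_{\operatorname{coh}}(X)$. We will show that
\begin{displaymath}
    |X|\setminus \operatorname{reg}(X) = \bigcup_{n\geq 0}\operatorname{Supp}\bigl(\mathcal{E}xt^n(G,G)\bigr)
\end{displaymath}
is a finite union of closed subsets. An alternative that avoids $\mathcal{E}xt$ sheaves would be to use the locus where $G$ is not perfect. For $p\in|X|$, write $\mathcal{O}_{X,\bar p}$ for the (strictly henselian) local ring of an \'{e}tale neighborhood, or the local ring $\mathcal{O}_{U,u}$ of an \'{e}tale presentation $s\colon U\to X$ at $u\in|s|^{-1}(p)$. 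By \Cref{rmk:regular_locus_by_presentation}, regularity at $p$ is equivalent to regularity of $\mathcal{O}_{U,u}$.

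\textbf{Step 1: the non-perfect locus is closed.} Define $Z:=\{p\in |X| : G \text{ is not perfect at } p\}$, meaning that the stalk $G_u$ is not a perfect $\mathcal{O}_{U,u}$-complex. This set is independent of the choice of $u$, since perfectness is \'{e}tale local. It is closed because the perfect locus of a pseudocoherent bounded complex is open. Indeed, on $U$ one has: $G_u$ perfect iff $\operatorname{Tor}$-amplitude bounded at $u$, iff $G$ is perfect on a neighborhood. This is standard (Stacks Project, perfect complexes over Noetherian schemes), and it descends to $|X|$ because $|s|$ is open.

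\textbf{Step 2: $Z$ equals the singular locus.} If $\mathcal{O}_{U,u}$ is regular, every bounded coherent complex over it is perfect, so $p\notin Z$. Conversely, suppose $p\notin Z$; we show $\mathcal{O}_{U,u}$ is regular. The key point is that pullback to the local ring is compatible with finite building. The functor $D^b_{\operatorname{coh}}(X)\to D^b_{\operatorname{coh}}(\mathcal{O}_{U,u})$ is essentially dense, because every coherent module over $\mathcal{O}_{U,u}$ extends to a coherent sheaf on $U$ and then, via pushforward along the quasi-affine morphism, up to summands on $X$. Since $G$ generates $D^b_{\operatorname{coh}}(X)$, the residue field $\kappa(u)$ is then a summand of an object finitely built from $G_u$. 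As $G_u$ is perfect, $\kappa(u)$ is perfect, so $\mathcal{O}_{U,u}$ is regular by Auslander--Buchsbaum--Serre.

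The main obstacle is precisely this essential-density step: producing a coherent object on $X$ whose pullback contains $\kappa(u)$ as a summand. The cleanest route avoids pushforward entirely. Take $u$ a point of the scheme $U$ and its closure $V\subseteq U$ with reduced structure. Its image in $X$ can be dominated by a closed subspace $W\hookrightarrow X$ (the scheme-theoretic image of $V$) carrying a coherent sheaf $F$ whose pullback to $U$ is supported on $|s|^{-1}(\overline{\{p\}})$ and is nonzero at $u$. Then $F_u$ is a nonzero finite-length-at-generic module on the closure of $u$ after localization, indeed of finite length over $\mathcal{O}_{U,u}$ after restricting to the closed point of $\operatorname{Spec}\mathcal{O}_{U,u}$. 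Since $F\in\langle G\rangle$, $F_u$ is perfect. A nonzero finite-length perfect module forces regularity? That alone does not suffice, so instead we argue via the residue field. A finite-length module is finitely built from $\kappa(u)$, but not conversely. Hence the more robust approach is the essential-density argument: every coherent $\mathcal{O}_{U,u}$-module $M$ is a localization of a coherent sheaf $M'$ on $U$. Choosing an open subspace of $X$ with an affine \'{e}tale chart around $u$ (Noetherian, quasi-separated), coherent sheaves extend from opens of $X$ to $X$. We then pass through $\mathbf{R}s_\ast$ restricted to a quasi-affine open, which is quasi-coherent, and use that it is a filtered union of coherent subsheaves whose localization recovers $M$ up to the pullback-pushforward counit. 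With $M=\kappa(u)$, this gives $\kappa(u)\in\langle G_u\rangle$, hence $\kappa(u)$ is perfect.

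Combining Steps 1 and 2, $\operatorname{reg}(X)=|X|\setminus Z$ is open.
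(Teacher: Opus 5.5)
Your overall route is the paper's. The paper passes to an affine \'{e}tale presentation $s\colon U\to X$, uses essential density of $\mathbf{L}s^\ast$ \cite[Proposition 1.4]{Lank:2026c} to see that $\mathbf{L}s^\ast G$ classically generates $D^b_{\operatorname{coh}}(U)$, applies \cite[Lemma 2.9]{Iyengar/Takahashi:2019} on $U$, and descends via \Cref{rmk:regular_locus_by_presentation} and openness of $|s|$. Your Steps 1--2 are in effect a proof of that lemma carried out at the local rings $\mathcal{O}_{U,u}$. Step 1 is fine, and so is the implication `$\kappa(u)\in\langle G_u\rangle$ and $G_u$ perfect $\Rightarrow$ $\mathcal{O}_{U,u}$ regular'. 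Discard the opening display, though: it is false. The sheaf $\mathcal{E}xt^0(G,G)$ has nonzero stalk (the identity) wherever $G$ does, and $G$ has full support since $\mathcal{O}_X\in\langle G\rangle$, so the union over $n\geq 0$ is all of $|X|$.

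The gap is the step you flag yourself. The claim $\kappa(u)\in\langle G_u\rangle$ is exactly what the paper outsources to \cite{Lank:2026c}, and your write-up asserts it without proving it. In particular, `localization recovers $M$ up to the pullback--pushforward counit' is not a valid mechanism. For \'{e}tale $s$ the counit $s^\ast s_\ast M'\to M'$ is not an isomorphism (already for the fold map $X\sqcup X\to X$), so you need a splitting. It comes from $s$ being separated (as $U$ is affine) and \'{e}tale, so that $\Delta_s$ is open and closed in $U\times_X U$. Flat base change gives $s^\ast s_\ast M'\cong\mathrm{pr}_{1\ast}\mathrm{pr}_2^\ast M'$, which contains $M'$ as a direct summand splitting the counit. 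Now write $s_\ast M'$ as a filtered union of coherent subsheaves: the coherent summand $M'$ lies in, hence is a summand of, $s^\ast F$ for some coherent $F\subseteq s_\ast M'$. No extension from opens of $X$ and no use of $\mathbf{R}s_\ast$ is needed.

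More simply, the `cleanest route' you abandoned already works if you choose $F$ well: take $F=\mathcal{O}_W$ for the reduced induced closed subspace $W$ with $|W|=\overline{\{p\}}$. Then $s^{-1}(W)$ is reduced, being \'{e}tale over $W$. Moreover, $u$ is a generic point of $s^{-1}(W)$: a generalization of $u$ in $s^{-1}(W)$ lies over a generalization of $p$ in $|W|$, hence over $p$, and fibres of $s$ have no nontrivial specializations. So $(s^\ast F)_u\cong\kappa(u)$ on the nose, not merely a finite-length module. This is the analogue of $(R/\mathfrak{p})_{\mathfrak{p}}=\kappa(\mathfrak{p})$, and it bypasses essential density entirely.
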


\begin{proof}
    Choose a classical generator $G$ for $D^b_{\operatorname{coh}}(X)$. 
    Pick an \'{e}tale presentation $s\colon U \to X$ from an affine scheme.
    By \cite[Proposition 1.4]{Lank:2026c}, 
    \begin{displaymath}
        \mathbf{L}s^\ast \colon D^b_{\operatorname{coh}}(X) \to D^b_{\operatorname{coh}}(U)
    \end{displaymath}
    is essentially dense. 
    Hence, $\mathbf{L}s^\ast G$ is a classical generator for $D^b_{\operatorname{coh}}(U)$. 
    By \cite[Lemma 2.9]{Iyengar/Takahashi:2019}, $\operatorname{reg}(U)$ is open.  
    By \Cref{rmk:regular_locus_by_presentation}, $|s|(\operatorname{reg}(U))=\operatorname{reg}(X)$. 
    Since $s$ is \'{e}tale, $|s|$ is an open morphism.
    This completes the proof.
\end{proof}

\begin{remark}
    Recall that a Noetherian algebraic space $X$ being integral is equivalent to $X$ being reduced and $|X|$ irreducible \cite[\href{https://stacks.math.columbia.edu/tag/0AD3}{Tag 0AD3}]{StacksProject}. 
\end{remark}

\begin{lemma}
    \label{lem:generation_verdier_quotient}
    Let $\mathcal{T}$ be a triangulated category.
    Fix a thick subcategory $\mathcal{K}\subseteq\mathcal{T}$.
    Denote by $q\colon \mathcal{T}\to\mathcal{T}/\mathcal{K}$ the Verdier quotient.
    Choose $E,G\in \mathcal{T}$.
    If $q(E)\in\langle q(G) \rangle_n$ for some $n\geq 1$, then $E\in\langle \{G\oplus K: K\in \mathcal{K}\} \rangle_{6n-3}$.
\end{lemma}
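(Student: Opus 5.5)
We argue by induction on $n\geq 1$, tracking how morphisms and cones in the Verdier quotient lift to $\mathcal{T}$. Write $\mathcal{S}:=\{G\oplus K : K\in\mathcal{K}\}$. Note that $\mathcal{S}$ absorbs finite sums of its members up to objects of $\mathcal{K}$: $(G\oplus K)\oplus(G\oplus K') \cong G^{\oplus 2}\oplus K''$. Moreover, every $K\in\mathcal{K}$ lies in $\langle\mathcal{S}\rangle_2$, as the cone of the split inclusion $G\to G\oplus K$, or more directly as a summand of $G\oplus K$. Hence $K\in\langle\mathcal{S}\rangle_1$, since $\operatorname{add}$ is closed under summands. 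This observation is what lets us freely adjoin objects of $\mathcal{K}$ at no cost in level.

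\textbf{Base case $n=1$.} Suppose $q(E)\in\langle q(G)\rangle_1$. Then $q(E)\oplus q(E')\cong q(G^{\oplus m})$ (with shifts), up to a summand. By the standard description of isomorphisms in the Verdier quotient, there exist roofs $E\oplus E'\leftarrow W\to \bigoplus G[i]$ whose cones lie in $\mathcal{K}$. We conclude with the standard fact that if $q(A)\cong q(B)$, then $A$ is built from $B$ and $\mathcal{K}$ by two cones. For the summand, one uses the known lemma that if $q(A)$ is a summand of $q(B)$, then $A\oplus A[1]$, or $A$ itself, is a summand of $B\oplus K$ up to cones. This gives $E\in\langle\mathcal{S}\rangle_3=\langle\mathcal{S}\rangle_{6\cdot1-3}$. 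The constant $3$ arises from: one cone to replace $E$ by $W$, one cone to pass from $W$ to $\bigoplus G[i]$, and one level for the summand adjustment. Here we use that $\mathcal{K}$-objects already sit in level $1$.

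\textbf{Inductive step.} Suppose $q(E)\in\langle q(G)\rangle_{n}$ with $n\ge2$. Then $q(E)\oplus q(E')\cong \operatorname{cone}(\phi)$ for some $\phi\colon q(A)\to q(B)$ with $q(A)\in\langle q(G)\rangle_{n-1}$ and $q(B)\in\langle q(G)\rangle_1$. We may take $A,B\in\mathcal{T}$ as objects, since $q$ is the identity on objects. The morphism $\phi$ is a roof $A\xleftarrow{s}A'\xrightarrow{f}B$ with $\operatorname{cone}(s)\in\mathcal{K}$, so $q(\operatorname{cone}(f))\cong\operatorname{cone}(\phi)$. By induction, $A\in\langle\mathcal{S}\rangle_{6n-9}$. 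Then $A'$, being an extension of $A$ by an object of $\mathcal{K}$, lies in $\langle\mathcal{S}\rangle_{6n-8}$, and $B\in\langle\mathcal{S}\rangle_3$ by the base case. Hence $\operatorname{cone}(f)\in\langle\mathcal{S}\rangle_{6n-5}$. Finally, relating $E$ to $\operatorname{cone}(f)$ through the isomorphism $q(E)\oplus q(E')\cong q(\operatorname{cone}(f))$ and taking a summand costs at most two further levels by the same argument as in the base case. This gives $E\in\langle\mathcal{S}\rangle_{6n-3}$.

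\textbf{Main obstacle.} The key difficulty is the precise bookkeeping showing that an isomorphism or direct summand relation in $\mathcal{T}/\mathcal{K}$ costs only a bounded number of levels when lifted to $\mathcal{T}$. The subtle point is the direct summand step, since summands in the quotient need not lift to summands in $\mathcal{T}$. Our first attempt will be the trick of writing $E$ as a summand in $\mathcal{T}$ of $E\oplus E[1]$, which lifts, in the spirit of Thomason's lemma on dense subcategories. If that route is problematic, we will instead replace $E'$ by a lift and use the roof to show that $E\oplus E'$ is an extension involving $\mathcal{K}$. The exact constant $6n-3$ should then follow by sharpening these counts; any linear bound is handled in the same manner.
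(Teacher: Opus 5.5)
Your proposal is correct and takes essentially the paper's route: induction on $n$, a base case from a roof realizing $q(E\oplus E')\cong\bigoplus_i q(G)[s_i]$, and an inductive step that lifts the defining triangle and then uses one more roof for $q(E\oplus E')\cong q(\operatorname{cone}(f))$, reaching the same bounds $3$ and $6n-3$ (you lift the triangle by lifting $\phi$ through a roof, which is the standard proof that triangles lift, and this costs you one harmless extra level at $A'$).

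The direct-summand ``obstacle'' you flag is not an obstacle: $\mathcal{T}/\mathcal{K}$ has the same objects as $\mathcal{T}$, so the complement lifts to some $E'$, and passing from $E\oplus E'$ to $E$ is free because each $\langle\mathcal{S}\rangle_m$ is closed under summands. Hence the third level in your base case comes from $\bigoplus_i G[s_i]$ sitting in level $1$, not from a summand adjustment, and the $E\oplus E[1]$ trick is unnecessary.
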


\begin{proof}
    This is proved by induction on $n$.
    For $m\geq 0$, write $\langle G \oplus \mathcal{K} \rangle_m := \langle \{G\oplus K: K\in \mathcal{K}\} \rangle_m$.
    Consider the case $n=1$.
    Then $q(E)\in\langle q(G) \rangle_1$. 
    Hence, there exist an object $E^\prime\in\mathcal{T}$ and an isomorphism $q(E\oplus E^\prime) \cong \bigoplus_{i\in I} q(G)[s_i]$ (for some finite set $I$). 
    This corresponds to a roof in $\mathcal{T}$,
    \begin{displaymath}
        \begin{tikzcd}
            & F & \\
            {E\oplus E^\prime} && {\bigoplus_{i\in I} G [s_i]}
            \arrow["f"', from=1-2, to=2-1]
            \arrow["g", from=1-2, to=2-3]
        \end{tikzcd}
    \end{displaymath}
    where $\operatorname{cone}(f)$, $\operatorname{cone}(g)\in\mathcal{K}$. 
    Consequently, it follows that $F\in \langle G \oplus \mathcal{K} \rangle_2$, and so, $E\in \langle G \oplus \mathcal{K} \rangle_3$.

    Assume that we have proved the claim for all $\leq n$.
    Choose $q(E)\in\langle q(G) \rangle_{n+1}$. 
    There exists a distinguished triangle
    \begin{equation}
        \label{eq:finitely_building_verdier_quotients}
        q(A) \to q(E \oplus E^\prime) \to q(B) \to q(A)[1]
    \end{equation}
    where $q(A)\in\langle q(G) \rangle_n$ and $q(B)\in\langle q(G) \rangle_1$.
    Moreover, there exists a distinguished triangle in $\mathcal{T}$,
    \begin{displaymath}
        A^\prime \to E^{\prime\prime} \to B^\prime \to A^\prime[1]
    \end{displaymath}
    whose image under $q \colon \mathcal{T} \to \mathcal{T}/\mathcal{K}$ is isomorphic to the distinguished triangle $\eqref{eq:finitely_building_verdier_quotients}$.
    Hence, $q(A^\prime)\cong q(A)\in\langle q(G) \rangle_n$ and $q(B^\prime)\cong q(B)\in\langle q(G) \rangle_1$.
    By the induction hypothesis, $A^\prime\in\langle G \oplus \mathcal{K} \rangle_{6n-3}$ and $B^\prime\in\langle G \oplus \mathcal{K} \rangle_3$. 
    Therefore, ${E}^{\prime\prime} \in \langle G \oplus \mathcal{K} \rangle_{6n+1}$.
    A similar argument with a roof corresponding to $q(E\oplus E^\prime)\cong q(E^{\prime\prime})$ gives us $E\in\langle G \oplus \mathcal{K} \rangle_{6n+3}$.
\end{proof}

\begin{proposition}
    \label{prop:j_condition_stacks}
    For a Noetherian algebraic space $X$, the following are equivalent:
    \begin{enumerate}
        \item\label{item:jcond1} every integral closed subspace $Z$ of $X$ is $J\textrm{-}0$,
        \item\label{item:jcond2} $D^b_{\operatorname{coh}}(Z)$ admits a classical generator for every integral closed subspace $Z$ of $X$.
    \end{enumerate}
    Moreover, if these equivalent conditions hold, $D^b_{\operatorname{coh}}(Y)$ admits a classical generator for any closed subspace $Y$ of $X$.
\end{proposition}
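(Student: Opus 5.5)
The implication from (2) to (1) is immediate from \Cref{lem:existence_classical_generator_implies_open_cohomological_regular_locus}. If $Z$ is an integral closed subspace, then $D^b_{\operatorname{coh}}(Z)$ admits a classical generator, so $\operatorname{reg}(Z)$ is open. Because $Z$ is integral, it is reduced, so its generic point lies in $\operatorname{reg}(Z)$: the local ring of an \'{e}tale presentation at a point over the generic point is a reduced Artinian local ring, hence a field. Thus $\operatorname{reg}(Z)$ is a nonempty open, and $Z$ is $J\textrm{-}0$.

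For (1) implies (2), I would follow \cite[Theorem 1.1]{Dey/Lank:2024a} and argue by Noetherian induction on closed subspaces of $X$. Assume every proper integral closed subspace of a given integral closed subspace $Z$ has a classical generator, and more generally every closed subspace whose support is strictly smaller than $|Z|$ does. The latter follows from the former by \Cref{lem:classical_generation_from_irreducible_components}, using that the reductions of the irreducible components cover the support. By (1), choose a nonempty open $W\subseteq\operatorname{reg}(Z)$, and let $T$ be the reduced closed complement, so $|T|\subsetneq|Z|$ and $D^b_{\operatorname{coh}}(T)$ has a classical generator $G_T$. By \Cref{lem:reduced_subscheme_generation}, applied with nilpotent thickenings of $T$ and the induction hypothesis, $D^b_{\operatorname{coh},|T|}(Z)=\langle \mathbf{R}i_\ast G_T\rangle$.

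Next, choose a perfect complex $P$ on $Z$ which compactly generates $D_{\operatorname{qc}}(Z)$; it exists by the single compact generation recalled in \Cref{sec:prelim_perfects}. I claim $G:=P\oplus \mathbf{R}i_\ast G_T$ classically generates $D^b_{\operatorname{coh}}(Z)$. Given $E\in D^b_{\operatorname{coh}}(Z)$, its restriction to $W$ is perfect because $W$ is regular. I would then show that $E$ lies in the thick closure of $\operatorname{Perf}(Z)$ together with $D^b_{\operatorname{coh},|T|}(Z)$. The route is to work in the Verdier quotient $D^b_{\operatorname{coh}}(Z)/D^b_{\operatorname{coh},|T|}(Z)\simeq D^b_{\operatorname{coh}}(W)=\operatorname{Perf}(W)$. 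The identification with $D^b_{\operatorname{coh}}(W)$ uses that coherent sheaves on $W$ extend to $Z$ (\cite[Tag 07UP]{StacksProject}-type extension results). Moreover, $\operatorname{Perf}(W)=\langle P|_W\rangle$ up to summands, by Thomason--Neeman localization for algebraic spaces (\cite[Tag 0AEC]{StacksProject}). So $q(E)\in\langle q(P)\rangle_n$ for some $n$, and \Cref{lem:generation_verdier_quotient} gives $E\in\langle\{P\oplus K: K\in D^b_{\operatorname{coh},|T|}(Z)\}\rangle$. Since $D^b_{\operatorname{coh},|T|}(Z)\subseteq\langle \mathbf{R}i_\ast G_T\rangle$, we get $E\in\langle G\rangle$.

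The main obstacle is this localization step. Specifically, $q(P)$ must generate the quotient without summand issues, and restriction must identify the quotient with $D^b_{\operatorname{coh}}(W)$. If the summand issue appears, I would add $E\oplus E[1]$-type tricks, or use that the quotient objects lift up to a perfect summand, which is absorbed into the thick closure anyway. The ``moreover'' statement then follows from \Cref{lem:classical_generation_from_irreducible_components}. Any closed subspace $Y$ has finitely many irreducible components, and their reduced structures are integral closed subspaces of $X$, hence have classical generators by (2). These components cover $|Y|$, so $D^b_{\operatorname{coh}}(Y)$ has a classical generator.
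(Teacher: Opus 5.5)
Your proposal is correct and follows essentially the same route as the paper: Noetherian induction reducing to an integral $Z$, a nonempty regular open $W$ whose reduced closed complement is handled by the induction hypothesis and \Cref{lem:reduced_subscheme_generation}, the Verdier localization $D^b_{\operatorname{coh}}(Z)/D^b_{\operatorname{coh},|T|}(Z)\simeq D^b_{\operatorname{coh}}(W)$, and then \Cref{lem:generation_verdier_quotient}. The only variation is that you use a global perfect compact generator $P$ of $D_{\operatorname{qc}}(Z)$ instead of lifting a classical generator of $D^b_{\operatorname{coh}}(W)=\operatorname{Perf}(W)$ to $Z$, and the summand issue you worry about does not arise: $P|_W$ is a compact generator of $D_{\operatorname{qc}}(W)$ because $\mathbf{R}j_\ast$ is a fully faithful right adjoint, so $\operatorname{Perf}(W)=\langle P|_W\rangle$ exactly by Neeman's theorem, since thick closures already contain direct summands.
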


\begin{proof}
    The last claim follows from the first claim with \Cref{lem:classical_generation_from_irreducible_components}.
    It is clear from \Cref{lem:existence_classical_generator_implies_open_cohomological_regular_locus} that $\eqref{item:jcond2}\implies \eqref{item:jcond1}$.
    We check the converse. 
    Let $Y \to X$ be a closed immersion with integral source. 
    We will use Noetherian induction on the closed subsets $W$ of $|Y|$ to show that $D^b_{\operatorname{coh}}(W^\prime)$ admits a classical generator where $i\colon W^\prime \to Y$ is a closed immersion with $|i|(|W^\prime|) = W$. 
    There is nothing to show when $W$ is empty, and so, we assume it is not.
    Assume that we have proved the claim for all properly contained closed subsets of $W$. 
    By \Cref{lem:reduced_subscheme_generation}, it suffices to prove the claim for $D^b_{\operatorname{coh}}$ of $W$ with the reduced induced algebraic space structure $W_{red} \to Y$. 
    Since $W_{red}$ is a Noetherian algebraic space, $|W_{red}|$ has a finite number of irreducible closed components. 
    Endow each of these with their reduced induced algebraic space structure. 
    Applying \Cref{lem:classical_generation_from_irreducible_components}, we may reduce to the case in which $W_{red}$ is integral. 
    In such a case, the hypothesis ensures $W_{red}$ is $J\textrm{-}0$. 
    If $W_{red}$ is regular, \cite[Lemma 5.28]{GuisadoVillaalgordo/Lank:2026} says $D^b_{\operatorname{coh}}(W_{red})=\operatorname{Perf}(W_{red})$ admits a classical generator \cite[\href{https://stacks.math.columbia.edu/tag/09IY}{Tags 09IY} \& \href{https://stacks.math.columbia.edu/tag/09SR}{09SR}]{StacksProject}. 
    Consider the case where $W_{red}$ is not regular. 
    By the $J\textrm{-}0$ hypothesis, there exists a nonempty open immersion $j\colon U \to W_{red}$ with $|U|\subseteq \operatorname{reg}(W_{red})\neq |W_{red}|$. 
    Since $U$ is regular, $D^b_{\operatorname{coh}}(U)$ admits a classical generator (see loc.\ cit.) $G_U$.
    Let $Z \subseteq W_{red}$ be the closed subspace associated to $|W_{red}|\setminus |U|$ with its reduced induced structure.
    By \cite[Lemma A.4]{Lank:2026c}, there exists a Verdier localization sequence
    \begin{displaymath}
        D^b_{\operatorname{coh},|Z|}(W_{red}) \to D^b_{\operatorname{coh}}(W_{red}) \xrightarrow{\mathbf{L}j^\ast} D^b_{\operatorname{coh}}(U).
    \end{displaymath}
    Hence, there exists $G^\prime_U\in D^b_{\operatorname{coh}}(W_{red})$ such that $\mathbf{L}j^\ast G^\prime_U\cong G_U$.
    By the induction hypothesis, $D^b_{\operatorname{coh}}(Z)$ admits a classical generator. 
    Applying \Cref{lem:reduced_subscheme_generation}, it follows that $D^b_{\operatorname{coh},|Z|}(W_{red})$ admits a classical generator $G$. 
    Thus, \Cref{lem:generation_verdier_quotient} implies that $D^b_{\operatorname{coh}}(W_{red})$ admits a classical generator.  
    Indeed, it follows that 
    \begin{displaymath}
        D^b_{\operatorname{coh}}(W_{red})\subseteq \langle D^b_{\operatorname{coh},|Z|}(W_{red}) \oplus G^\prime_U \rangle \subseteq \langle G\oplus G^\prime_U \rangle.
    \end{displaymath} 
    This completes the proof.
\end{proof}

\begin{corollary}
    \label{cor:classical_generator_for_J2}
    For any $J\textrm{-}2$ quasi-compact quasi-separated algebraic space $X$, $D^b_{\operatorname{coh}}(X)$ admits a classical generator. 
\end{corollary}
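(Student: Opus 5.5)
The plan is to reduce to \Cref{prop:j_condition_stacks}. Its final clause applied with $Y = X$ gives a classical generator of $D^b_{\operatorname{coh}}(X)$, provided $X$ is Noetherian and every integral closed subspace $Z$ of $X$ is $J\textrm{-}0$.

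First, one must know that $X$ is Noetherian, so that the proposition applies. The $J\textrm{-}2$ notion is only defined for locally Noetherian algebraic spaces, and $X$ is quasi-compact and quasi-separated by hypothesis. Hence $X$ is Noetherian in the sense of \cite{StacksProject}.

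Next, fix an integral closed subspace $i\colon Z \to X$; the task is to show $\operatorname{reg}(Z)$ contains a nonempty open subset. Choose an \'{e}tale presentation $s\colon U \to X$ from a scheme and set $U_Z := U\times_X Z$. Then $U_Z \to Z$ is an \'{e}tale presentation, and $U_Z$ is a scheme since $U_Z \to U$ is a closed immersion. The composite $U_Z \to U \to X$ is a closed immersion followed by an \'{e}tale morphism of finite presentation. It is therefore locally of finite presentation: the closed immersion is of finite presentation because $X$ is locally Noetherian. By the $J\textrm{-}2$ hypothesis, $\operatorname{reg}(U_Z)$ is open in $|U_Z|$. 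By \Cref{rmk:regular_locus_by_presentation} applied to $U_Z \to Z$, we get $\operatorname{reg}(Z) = |s_Z|(\operatorname{reg}(U_Z))$. This set is open because \'{e}tale morphisms induce open maps on topological spaces. So $Z$ is $J\textrm{-}1$.

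It remains to pass from $J\textrm{-}1$ to $J\textrm{-}0$, and this is the step needing care: one must check that the open set $\operatorname{reg}(Z)$ is nonempty. Since $Z$ is integral, $|Z|$ has a generic point $\eta$. I will show that $Z$ is regular at $\eta$. Pick a point $u \in |U_Z|$ over $\eta$; the plan is to show that $u$ may be taken to be a generic point of an irreducible component of $U_Z$. Because $U_Z \to Z$ is \'{e}tale, hence flat, generizations lift. So any point over $\eta$ specializes from, and is therefore itself, a point of codimension zero; I expect this to follow from $\eta$ having no proper generizations. Then $\mathcal{O}_{U_Z,u}$ is a zero-dimensional Noetherian local ring. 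It is reduced because $Z$ is reduced and reducedness is \'{e}tale local. A reduced Artinian local ring is a field, hence regular. Thus $\eta \in \operatorname{reg}(Z)$, so $\operatorname{reg}(Z)$ is a nonempty open subset and $Z$ is $J\textrm{-}0$.

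Since $Z$ was arbitrary, condition \eqref{item:jcond1} of \Cref{prop:j_condition_stacks} holds. The final assertion of that proposition, applied to the closed subspace $X$ itself, then gives a classical generator for $D^b_{\operatorname{coh}}(X)$.
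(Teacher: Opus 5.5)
Your proposal is correct and follows the paper's route. The paper likewise takes an \'{e}tale presentation $V \to Z$, notes that $V \to Z \to X$ is of finite presentation so that $J\textrm{-}2$ makes $\operatorname{reg}(V)$, and hence $\operatorname{reg}(Z)$, open, observes that the generic point of $Z$ is regular, and concludes via \Cref{prop:j_condition_stacks}. Your added details (Noetherianity of $X$, building the chart of $Z$ by base change, and the reduced Artinian local ring argument at the generic point) fill in steps the paper states in a line.
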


\begin{proof}
    Let $i\colon Z \to X$ be a closed immersion from an integral algebraic space. 
    Choose an \'{e}tale presentation $s\colon V \to Z$. 
    Then $i\circ s$ is of finite presentation, and so $\operatorname{reg}(V)$ is open.
    By \Cref{rmk:regular_locus_by_presentation}, $\operatorname{reg}(Z)$ is open because $|s|$ is an open morphism.
    As $Z$ is integral, its generic point is contained in $\operatorname{reg}(Z)$. 
    Hence, $Z$ is $J\textrm{-}0$. 
    Thus, the claim follows from \Cref{prop:j_condition_stacks}.
\end{proof}

\begin{lemma}
    \label{lem:quasi_excellent_etale_local}
    The property of being quasi-excellent is local in the \'{e}tale topology for schemes.
\end{lemma}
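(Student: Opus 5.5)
We need to show: for an étale covering $\{U_i \to X\}$ of schemes, $X$ is quasi-excellent iff each $U_i$ is (with $X$ locally Noetherian). Recall that a scheme is quasi-excellent if it is locally Noetherian and every local ring is a G-ring and it is J-2, or equivalently every affine open is the spectrum of a quasi-excellent ring. The plan is to treat the two conditions separately: the G-ring condition on local rings, and the J-2 condition. Local Noetherianity is étale local by \cite[\href{https://stacks.math.columbia.edu/tag/034F}{Tag 034F}]{StacksProject}.

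\textbf{Descent to $U_i$.} An étale morphism is locally of finite type, and quasi-excellence is preserved by morphisms locally of finite type. So each $U_i$ is quasi-excellent, as recorded in the Stacks Project's excellent rings chapter (the statement that a finite type algebra over a quasi-excellent ring is quasi-excellent).

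\textbf{Ascent from the cover.} Conversely, suppose each $U_i$ is quasi-excellent and $\{U_i\to X\}$ is jointly surjective.

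For the G-ring property, fix $x\in X$ and choose $u\in U_i$ over $x$. The map $\mathcal{O}_{X,x}\to \mathcal{O}_{U_i,u}$ is a flat local homomorphism that is étale, hence faithfully flat with regular (indeed geometrically regular) fibers. A Noetherian local ring $A$ is a G-ring iff its formal fibers are geometrically regular. For a faithfully flat local homomorphism $A\to B$ with $B$ a G-ring, $A$ is a G-ring; this is \cite[\href{https://stacks.math.columbia.edu/tag/07PM}{Tag 07PM}]{StacksProject}-type descent (Matsumura, Thm.~32.6 style: formal fibers of $A$ descend along faithfully flat $\hat A\to\hat B$). This part of the argument is pointwise.

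For J-2, it suffices to show that for any $Y\to X$ locally of finite type with $Y$ a scheme, $\operatorname{reg}(Y)$ is open. Base change gives $Y_i := Y\times_X U_i\to U_i$, which is locally of finite type, so $\operatorname{reg}(Y_i)$ is open by J-2 of $U_i$. Since $Y_i\to Y$ is étale, regularity ascends and descends along it \cite[\href{https://stacks.math.columbia.edu/tag/036D}{Tag 036D}]{StacksProject}. Therefore $\operatorname{reg}(Y)=\bigcup_i \operatorname{im}(\operatorname{reg}(Y_i))$, which is open because étale maps are open.

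Alternatively, one can simply cite that quasi-excellence is a local property in the étale (even smooth) topology; the Stacks Project lists "quasi-excellent" among properties of rings local in the smooth topology. The main obstacle is the G-ring descent along the faithfully flat local map, since it needs the comparison of formal fibers of $\mathcal{O}_{X,x}$ and $\mathcal{O}_{U,u}$. I would handle it by quoting the standard descent result for G-rings under faithfully flat local homomorphisms rather than reproving it.
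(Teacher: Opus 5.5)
Your proposal is correct and follows essentially the paper's argument. Finite-type stability gives one direction. For the converse, the G-ring property descends along a faithfully flat regular map, and $\operatorname{reg}(X')$ is the open image of $\operatorname{reg}(X'\times_X U)$ under an \'{e}tale surjection. The only difference is that the paper descends the G-ring property globally, applying Matsumura's Theorem 32.2(ii) to $R\to\mathcal{O}_Y(Y)$, whereas you do it pointwise on local rings.

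One correction: your descent statement must assume $A\to B$ is regular, not merely faithfully flat local. As written it fails for $A\to\widehat{A}$ whenever $A$ is not a G-ring. You did verify regularity for $\mathcal{O}_{X,x}\to\mathcal{O}_{U_i,u}$, so the argument goes through once the lemma is stated correctly.
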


\begin{proof}
    We apply \cite[\href{https://stacks.math.columbia.edu/tag/0349}{Tag 0349}]{StacksProject}. 
    By \cite[\href{https://stacks.math.columbia.edu/tag/021N}{Tags 021N} \& \href{https://stacks.math.columbia.edu/tag/034C}{034C}]{StacksProject}, being locally Noetherian is local for the \'{e}tale topology.
    By  \cite[\href{https://stacks.math.columbia.edu/tag/01TT}{Tags 01TT}, \href{https://stacks.math.columbia.edu/tag/01TW}{01TW}, \& \href{https://stacks.math.columbia.edu/tag/0HAC}{0HAC}]{StacksProject}, quasi-excellence is local in the Zariski topology. 

    Consider an \'{e}tale morphism $Y \to X$ of affine schemes where $X$ is quasi-excellent. 
    By \cite[\href{https://stacks.math.columbia.edu/tag/01VE}{Tags 01VE}, \href{https://stacks.math.columbia.edu/tag/01TW}{01TW}, \& \href{https://stacks.math.columbia.edu/tag/0HAC}{0HAC}]{StacksProject}, $Y$ is quasi-excellent.

    Lastly, let $Y\to X$ be an \'{e}tale surjective morphism of affine schemes with $Y$ quasi-excellent.
    Write $X=\operatorname{Spec}(R)$.
    The first paragraph ensures that $R$ is Noetherian.
    Since $R\to\mathcal{O}_Y(Y)$ is \'{e}tale and faithfully flat, it is faithfully flat and regular, and so $R$ is a $G$-ring \cite[Theorem 32.2(ii)]{Matsumura:1989}.
    It remains to show that $X$ is $J\textrm{-}2$.
    Choose a morphism of schemes $X^\prime \to X$ which is locally of finite presentation.
    Form the fibered square 
    \begin{displaymath}
        \begin{tikzcd}
            {Y^\prime} & {X^\prime} \\
            Y & {X.}
            \arrow["t", from=1-1, to=1-2]
            \arrow[from=1-1, to=2-1]
            \arrow[from=1-2, to=2-2]
            \arrow[from=2-1, to=2-2]
        \end{tikzcd}
    \end{displaymath}
    By base change, $t$ is \'{e}tale and surjective.
    Applying \cite[\href{https://stacks.math.columbia.edu/tag/0AH9}{Tag 0AH9}]{StacksProject}, $X^\prime$ is regular at a point $p\in |X^\prime|$ if, and only if, $Y^\prime$ is regular at some (equivalently, every) point of $|t|^{-1}(p)$.
    Hence, $|t|(\operatorname{reg}(Y^\prime))=\operatorname{reg}(X^\prime)$.
    Since $t$ is \'{e}tale, $|t|$ is an open morphism on the underlying topological spaces.
    Moreover, $Y$ being quasi-excellent implies $Y^\prime$ is quasi-excellent \cite[\href{https://stacks.math.columbia.edu/tag/0HAC}{Tag 0HAC}]{StacksProject}, and so $\operatorname{reg}(Y^\prime)$ is an open subset of $|Y^\prime|$. 
    Thus, $\operatorname{reg}(X^\prime)$ is open, which shows that $X$ is $J\textrm{-}2$.
\end{proof}

\begin{definition}
    Let $X$ be an algebraic space. 
    We say that $X$ is \textbf{quasi-excellent} if there exists an \'{e}tale surjective morphism to $X$ from a quasi-excellent scheme. 
\end{definition}

\begin{remark}
    \label{rmk:quasi-excellet_is_j-2}
    By \cite[\href{https://stacks.math.columbia.edu/tag/03E6}{Tags 03E6} \& \href{https://stacks.math.columbia.edu/tag/03E8}{03E8}]{StacksProject}, an algebraic space $X$ being quasi-excellent is equivalent to every \'{e}tale morphism to $X$ from a scheme having quasi-excellent source.
    Moreover, any quasi-compact quasi-separated quasi-excellent algebraic space $X$ is $J\textrm{-}2$. 
    Indeed, a quasi-excellent scheme is $J\textrm{-}2$ \cite[\href{https://stacks.math.columbia.edu/tag/0HA8}{Tags 0HA8} \& \href{https://stacks.math.columbia.edu/tag/07R3}{07R3}]{StacksProject}.
    Hence, for any \'{e}tale presentation $U \to X$ by an affine scheme, the source is $J\textrm{-}2$.
    Take any morphism $Y\to X$ which is locally of finite presentation from a scheme. 
    Denote by $p_2\colon U\times_X Y \to Y$ and $p_1 \colon U\times_X Y \to U$ the canonical morphisms from the fiber product formed by $Y\to X$ and $U \to X$. 
    By \cite[Lemma 2.18]{GuisadoVillaalgordo/Lank:2026}, $U\to X$ is quasi-affine. 
    Hence, $U\times_X Y \to Y$ is an \'{e}tale presentation by a scheme. 
    Base change implies $p_1$ is locally of finite presentation. 
    As $U$ is $J\textrm{-}2$, $U\times_X Y$ has an open regular locus. 
    By \Cref{rmk:regular_locus_by_presentation}, $|p_2|(\operatorname{reg}(U\times_X Y)) = \operatorname{reg}(Y)$. 
    Since $p_2$ is \'{e}tale, $|p_2|$ is an open morphism on topological spaces. 
    Thus, $\operatorname{reg}(Y)$ is open.
\end{remark}

\section{Auxiliary}
\label{sec:auxiliary}

\begin{lemma}
    \label{lem:restrict_adjunction_to_Verdier_quotient}
    Let $\mathcal{T}$ and $\mathcal{S}$ be triangulated categories.
    Suppose $\mathcal{A}\subseteq \mathcal{T}$ and $\mathcal{B}\subseteq \mathcal{S}$ are thick subcategories. 
    Consider a pair of adjoint exact functors 
    \begin{displaymath}
        F \colon \mathcal{T} \leftrightarrows \mathcal{S} \colon G.
    \end{displaymath}
    If $F(\mathcal{A})\subseteq \mathcal{B}$ and $G(\mathcal{B})\subseteq \mathcal{A}$, then there exists an induced pair of adjoint exact functors 
    \begin{displaymath}
        \overline{F} \colon \mathcal{T}/\mathcal{A} \leftrightarrows \mathcal{S}/\mathcal{B} \colon \overline{G}.
    \end{displaymath}
    In particular, if $\epsilon$ and $\eta$ are respectively the counit and unit of $F\dashv G$, then the unit and counit of $\overline{F}\dashv \overline{G}$ are respectively induced by $\pi_\mathcal{A} (\eta_U)$ and $\pi_\mathcal{B} (\epsilon_V)$ for all $\pi_\mathcal{A} (U)\in \mathcal{T}/\mathcal{A}$ and $\pi_{\mathcal{B}} (V)\in \mathcal{S}/\mathcal{B}$.
    Here $\pi_{?}$ is the Verdier quotient functor, e.g.\ $\mathcal{T} \to \mathcal{T}/\mathcal{A}$. 
    Furthermore, the strictly full subcategory $\mathcal{T}_1$ of $E\in \mathcal{T}$ such that $\operatorname{cone}(\eta_E) \in \mathcal{A}$ is thick, and similarly for $\mathcal{S}_1$ given by $B\in \mathcal{S}$ such that $\operatorname{cone}(\epsilon_B) \in \mathcal{B}$.
\end{lemma}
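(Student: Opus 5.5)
The argument has three parts: descend $F$ and $G$ to the quotients, descend the unit and counit, and read off thickness from the two-out-of-three property of triangles.

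\emph{Descending the functors.} Since $F(\mathcal{A})\subseteq\mathcal{B}$, the composite $\pi_\mathcal{B}\circ F\colon\mathcal{T}\to\mathcal{S}/\mathcal{B}$ is exact and kills $\mathcal{A}$. By the universal property of the Verdier quotient it factors uniquely as $\overline{F}\circ\pi_\mathcal{A}$ with $\overline{F}$ exact. The same argument gives $\overline{G}$ with $\overline{G}\circ\pi_\mathcal{B}=\pi_\mathcal{A}\circ G$.

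\emph{Descending the adjunction.} Whiskering the natural transformations gives
\begin{displaymath}
    \pi_\mathcal{A}\eta\colon \pi_\mathcal{A}\Rightarrow \pi_\mathcal{A} G F=\overline{G}\,\overline{F}\pi_\mathcal{A},
    \qquad
    \pi_\mathcal{B}\epsilon\colon \overline{F}\,\overline{G}\pi_\mathcal{B}=\pi_\mathcal{B} FG\Rightarrow \pi_\mathcal{B}.
\end{displaymath}
By the $2$-categorical part of the universal property of Verdier quotients, precomposition with $\pi_\mathcal{A}$ is fully faithful on exact functors and natural transformations. This holds because every object of $\mathcal{T}/\mathcal{A}$ is of the form $\pi_\mathcal{A}(U)$ and every morphism is a roof $s^{-1}f$ with $s$ inverted. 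Hence $\pi_\mathcal{A}\eta$ descends to a unique natural transformation $\overline{\eta}\colon\mathrm{id}\Rightarrow\overline{G}\,\overline{F}$, and similarly $\pi_\mathcal{B}\epsilon$ descends to $\overline{\epsilon}\colon\overline{F}\,\overline{G}\Rightarrow\mathrm{id}$.

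The key check is naturality with respect to inverted morphisms. If $s\colon U'\to U$ has cone in $\mathcal{A}$, then $\pi_\mathcal{A}(GF(s))$ is invertible, because $GF$ preserves $\mathcal{A}$. Naturality of $\eta$ along $f$ and $s$ then gives naturality along $s^{-1}f$. The triangle identities for $\overline{\eta},\overline{\epsilon}$ hold after precomposing with $\pi_\mathcal{A}$ (respectively $\pi_\mathcal{B}$), since they are the images of the triangle identities for $\eta,\epsilon$. The faithfulness part of the universal property then gives the identities themselves. This produces the adjunction $\overline{F}\dashv\overline{G}$, with unit and counit induced by $\pi_\mathcal{A}(\eta_U)$ and $\pi_\mathcal{B}(\epsilon_V)$ as claimed.

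Alternatively, one can construct the bijection on Hom sets through roofs. One sends $\overline{F}U\to V$, represented by $FU\xleftarrow{t}W\to V$, to its adjoint. Using $G(\mathcal{B})\subseteq\mathcal{A}$ and $F(\mathcal{A})\subseteq\mathcal{B}$ one shows this is well defined. I would present the $2$-categorical route and mention the roof check only for the naturality step, which I expect to be the main (though routine) obstacle.

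\emph{Thickness.} For $\mathcal{T}_1$, a morphism of distinguished triangles $E_1\to E_2\to E_3\to E_1[1]$ maps via $\eta$ to $GF$ of the triangle, since $GF$ is exact and $\eta$ is compatible with shifts. By the octahedral axiom, or the $3\times3$ lemma, one obtains a distinguished triangle relating the three cones $\operatorname{cone}(\eta_{E_i})$ up to isomorphism. Since $\mathcal{A}$ is triangulated, two of the cones lying in $\mathcal{A}$ forces the third to lie there.

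A slicker formulation avoids this choice-of-cones issue: $E\in\mathcal{T}_1$ if and only if $\pi_\mathcal{A}(\eta_E)=\overline{\eta}_{\pi_\mathcal{A} E}$ is an isomorphism. The objects where a natural transformation between exact functors is an isomorphism form a thick subcategory, by the five lemma together with shift compatibility. Pulling a thick subcategory back along the exact functor $\pi_\mathcal{A}$ keeps it thick, which handles closure under summands, since $\eta_{E\oplus E'}=\eta_E\oplus\eta_{E'}$. The same argument applies to $\mathcal{S}_1$ with $\overline{\epsilon}$.
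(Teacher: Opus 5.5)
Your proposal is correct and is essentially the paper's argument. You descend $F$ and $G$ via the universal property, define the unit and counit by $\pi_{\mathcal{A}}(\eta_U)$ and $\pi_{\mathcal{B}}(\epsilon_V)$ with naturality along roofs coming from $GF(\mathcal{A})\subseteq\mathcal{A}$, and get the triangle identities as images of those for $F\dashv G$. Your ``slicker'' thickness argument is exactly how the paper handles extensions: $E\in\mathcal{T}_1$ if and only if $\pi_{\mathcal{A}}(\eta_E)$ is invertible, combined with two-out-of-three in a morphism of triangles. Your remark $\eta_{E\oplus E'}=\eta_E\oplus\eta_{E'}$ is the content of the paper's explicit identification $\operatorname{cone}(\eta_{\oplus_i E_i})\cong\bigoplus_i\operatorname{cone}(\eta_{E_i})$ for summands.
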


\begin{proof}
    By \cite[\href{https://stacks.math.columbia.edu/tag/05RJ}{Tag 05RJ}]{StacksProject}, the hypotheses $F(\mathcal{A})\subseteq\mathcal{B}$ and $G(\mathcal{B})\subseteq\mathcal{A}$ induce exact functors $\overline{F}\colon \mathcal{T}/\mathcal{A}\to\mathcal{S}/\mathcal{B}$ and $\overline{G}\colon \mathcal{S}/\mathcal{B}\to\mathcal{T}/\mathcal{A}$ such that $\overline{F} \circ \pi_{\mathcal{A}}=\pi_{\mathcal{B}} \circ F,$ and $\overline{G} \circ \pi_{\mathcal{B}}=\pi_{\mathcal{A}} \circ G$. 
    Since the Verdier quotient has the same objects as the original category, define
    \begin{displaymath}
        \overline\eta_{\pi_{\mathcal{A}} (U)}:=\pi_{\mathcal{A}}(\eta_U)\colon \pi_{\mathcal{A}}(U)\to(\overline{G}\circ \overline{F})(\pi_{\mathcal{A}}(U))
    \end{displaymath}
    for $\pi_{\mathcal{A}}(U)\in\mathcal{T}/\mathcal{A}$, and
    \begin{displaymath}
        \overline \epsilon_{\pi_{\mathcal{B}}(V)}:=\pi_{\mathcal{B}}(\epsilon_V)\colon  (\overline{F}\circ \overline{G})(\pi_{\mathcal{B}} (V))\to \pi_{\mathcal{B}}(V)
    \end{displaymath}
    for $\pi_{\mathcal{B}}(V)\in\mathcal{S}/\mathcal{B}$.

    We check the naturality of $\overline{\epsilon}$ and $\overline{\eta}$.
    Choose $\pi_{\mathcal{A}}(U) \in \mathcal{T}/\mathcal{A}$ and $\alpha \colon \pi_{\mathcal{A}} (U) \to \pi_{\mathcal{A}} (U^\prime)$ in $\mathcal{T}/\mathcal{A}$. 
    Choose a roof which represents $\alpha$,
    \begin{displaymath}
        \begin{tikzcd}
            & X & \\
            U && {U^\prime}
            \arrow["{\alpha_1}"', from=1-2, to=2-1]
            \arrow["{\alpha_2}", from=1-2, to=2-3]
        \end{tikzcd}
    \end{displaymath}
    where $\operatorname{cone}(\alpha_1)\in \mathcal{A}$. 
    By the naturality of $\eta$, the following diagram commutes,
    \begin{displaymath}
        \begin{tikzcd}
            &&& {GF(X)} & \\
            & X & {GF(U)} \\
            U &&&& {GF(U^\prime)} \\
            && {U^\prime}
            \arrow["{GF(\alpha_1)}", dashed, from=1-4, to=2-3]
            \arrow["{GF(\alpha_2)}", from=1-4, to=3-5]
            \arrow["{\eta_X}", from=2-2, to=1-4]
            \arrow["{\alpha_1}"', from=2-2, to=3-1]
            \arrow["{\alpha_2}", from=2-2, to=4-3]
            \arrow["{\eta_U}"'{pos=0.2}, dashed, from=3-1, to=2-3]
            \arrow["{\eta_{U^\prime}}", from=4-3, to=3-5]
        \end{tikzcd}
    \end{displaymath}
    where $\operatorname{cone}((G \circ F)(\alpha_1))\in \mathcal{A}$.
    Indeed, there exists an isomorphism
    \begin{displaymath}
        \operatorname{cone}((G\circ F)(\alpha_1)) \cong (G\circ F) (\operatorname{cone}(\alpha_1)),
    \end{displaymath}
    and the hypothesis implies $(G \circ F)(\mathcal{A})\subseteq \mathcal{A}$.

    Applying $\pi_\mathcal{A}$ yields 
    \begin{displaymath}
        \begin{aligned}
            \overline{\eta}_{\pi_{\mathcal{A}} (U^\prime)} \circ \alpha
            &= \pi_\mathcal{A} (\eta_{U^\prime}) \circ \pi_\mathcal{A}(\alpha_2) \circ (\pi_\mathcal{A} (\alpha_1))^{-1} 
            \\&= \pi_\mathcal{A} (\eta_{U^\prime} \circ \alpha_2) \circ (\pi_\mathcal{A} (\alpha_1))^{-1} 
            \\&= \pi_\mathcal{A} ( (G \circ F)(\alpha_2) \circ \eta_X) \circ (\pi_\mathcal{A} (\alpha_1))^{-1} 
            \\&= \pi_\mathcal{A} ( (G \circ F)(\alpha_2)) \circ \pi_\mathcal{A}(\eta_X) \circ (\pi_\mathcal{A} (\alpha_1))^{-1} 
            \\&= \pi_\mathcal{A} ( (G \circ F)(\alpha_2)) \circ (\pi_{\mathcal{A}} ((G \circ F)(\alpha_1)))^{-1} \circ \pi_\mathcal{A}(\eta_U)
            \\&= \overline{G}\overline{F}(\alpha) \circ \overline{\eta}_{\pi_\mathcal{A}(U)}.
        \end{aligned}
    \end{displaymath}
    Thus, $\overline{\eta}$ is a natural transformation.
    The same argument proves $\overline{\epsilon}$ is natural.

    We are left to verify the triangle identities. 
    The first follows from a direct computation:
    \begin{displaymath}
       \begin{aligned}
            \overline{\epsilon}_{\overline{F}(\pi_{\mathcal{A}} (U))} \circ \overline{F} (\overline{\eta}_{\pi_{\mathcal{A}} (U)})
            &= \overline{\epsilon}_{\overline{F}(U)} \circ \overline{F} (\pi_\mathcal{A} (\eta_U))
            \\&= \overline{\epsilon}_{\pi_\mathcal{B} (F(U))} \circ \pi_\mathcal{B} (F(\eta_U))
            \\&= \pi_\mathcal{B} (\epsilon_{F(U)}) \circ \pi_\mathcal{B} (F(\eta_U))
            \\&= \pi_\mathcal{B} (\epsilon_{F(U)} \circ F(\eta_U))
            \\&= \pi_\mathcal{B} (1_{F(U)})
            \\&= 1_{\overline{F}(U)}.
       \end{aligned}
    \end{displaymath}
    A similar argument shows that $\overline{G}(\overline{\epsilon}_{\pi_{\mathcal{B}}(V)}) \circ \overline{\eta}_{\overline{G}(\pi_{\mathcal{B}}(V))} = 1_{\overline{G}(V)}$ for all $\pi_{\mathcal{B}}(V)\in \mathcal{S}/\mathcal{B}$.

    Lastly, we prove that the strictly full subcategory of $E\in \mathcal{T}$ such that $\operatorname{cone}(\eta_E) \in \mathcal{A}$ is thick. 
    Denote by $\mathcal{T}_1$ the strictly full subcategory of such objects in $\mathcal{T}$. 
    The argument for $B\in \mathcal{S}$ satisfying $\operatorname{cone}(\epsilon_B) \in \mathcal{B}$ is similar.
    If $A\in \mathcal{T}_1$ and $n\in \mathbb{Z}$, then 
    \begin{displaymath}
        \operatorname{cone}(\eta_{A[n]})\cong \operatorname{cone}(\eta_A)[n]\in \mathcal{A}.
    \end{displaymath}
    Hence, $\mathcal{T}_1$ is closed under shifts. 

    Pick $A,B\in \mathcal{T}_1$ and $\alpha \colon A \to B$. 
    By the naturality of $\eta$, there exists a morphism of distinguished triangles
    \begin{displaymath}
        \begin{tikzcd}
            A & B & {\operatorname{cone}(\alpha)} & {A[1]} \\
            {(G\circ F)(A)} & {(G\circ F)(B)} & {(G\circ F)(\operatorname{cone}(\alpha))} & {(G\circ F)(A)[1].}
            \arrow["\alpha", from=1-1, to=1-2]
            \arrow["{\eta_A}"', from=1-1, to=2-1]
            \arrow[from=1-2, to=1-3]
            \arrow["{\eta_B}"', from=1-2, to=2-2]
            \arrow[from=1-3, to=1-4]
            \arrow["{\eta_{\operatorname{cone}(\alpha)}}"', from=1-3, to=2-3]
            \arrow["{\eta_A [1]}"', from=1-4, to=2-4]
            \arrow[from=2-1, to=2-2]
            \arrow[from=2-2, to=2-3]
            \arrow[from=2-3, to=2-4]
        \end{tikzcd}
    \end{displaymath}
    See e.g.\ the proof of \cite[Lemma 5.3.6]{Neeman:2001} or \cite[Proposition (3.3.8)]{Lipman/Hashimoto:2009}. 
    Applying $\pi_\mathcal{A}$ yields a morphism of distinguished triangles,
    \begin{displaymath}
        \begin{tikzcd}[row sep=1em,column sep=0.5em]
            {\pi_\mathcal{A}(A)} & {\pi_\mathcal{A}(B)} & {\pi_\mathcal{A}(\operatorname{cone}(\alpha))} & {\pi_\mathcal{A}(A)[1]} \\
            {(\pi_\mathcal{A} \circ G \circ F)(A)} & {(\pi_\mathcal{A} \circ G \circ F)(B)} & {(\pi_\mathcal{A} \circ G \circ F)(\operatorname{cone}(\alpha))} & {(\pi_\mathcal{A} \circ G \circ F)(A)[1].}
            \arrow["{\pi_\mathcal{A}(\alpha)}", from=1-1, to=1-2]
            \arrow["{\pi_\mathcal{A}(\eta_A)}"', from=1-1, to=2-1]
            \arrow[from=1-2, to=1-3]
            \arrow["{\pi_\mathcal{A}(\eta_B)}"', from=1-2, to=2-2]
            \arrow[from=1-3, to=1-4]
            \arrow["{\pi_\mathcal{A}(\eta_{\operatorname{cone}(\alpha)})}"', from=1-3, to=2-3]
            \arrow["{\pi_\mathcal{A}(\eta_A) [1]}"', from=1-4, to=2-4]
            \arrow[from=2-1, to=2-2]
            \arrow[from=2-2, to=2-3]
            \arrow[from=2-3, to=2-4]
        \end{tikzcd}
    \end{displaymath}
    Since $\pi_\mathcal{A}(\eta_A)$ and $\pi_\mathcal{A}(\eta_B)$ are isomorphisms, so is $\pi_\mathcal{A}(\eta_{\operatorname{cone}(\alpha)})$. 
    Hence, $\operatorname{cone}(\eta_{\operatorname{cone}(\alpha)}) \in \mathcal{A}$, which shows that $\mathcal{T}_1$ is closed under extensions. 

    Next we check that $\mathcal{T}_1$ is closed under direct summands. 
    Let $I$ be a finite set.
    Choose $E_i \in \mathcal{T}$ with $i\in I$. 
    Assume $\oplus_{i\in I} E_i \in \mathcal{T}_1$. 
    By the naturality of $\eta$, 
    \begin{displaymath}
        \begin{tikzcd}
            {\bigoplus_{i\in I} E_i} & {GF(\bigoplus_{i\in I} E_i)} \\
            {E_i} & {GF(E_i).}
            \arrow["{\eta_{\oplus_{i \in I} E_i}}", from=1-1, to=1-2]
            \arrow["{p_i}", from=2-1, to=1-1]
            \arrow["{\eta_{E_i}}", from=2-1, to=2-2]
            \arrow["{GF(p_i)}"', from=2-2, to=1-2]
        \end{tikzcd}
    \end{displaymath}
    Moreover, the unique (dashed) morphism whose restriction to each $GF(E_i)$ is $GF(p_i)$ makes the following diagram commute:
    \begin{displaymath}
        \begin{tikzcd}
            {\bigoplus_{i\in I} E_i} & {\bigoplus_{i\in I} GF(E_i)} \\
            & {GF(\bigoplus_{i\in I} E_i).}
            \arrow["{\oplus_{i \in I} \eta_{E_i}}", from=1-1, to=1-2]
            \arrow["{\eta_{\oplus_{i \in I} E_i}}"', from=1-1, to=2-2]
            \arrow[dashed, from=1-2, to=2-2]
        \end{tikzcd}
    \end{displaymath}
    The dashed morphism is an isomorphism because $G\circ F$ is additive.
    Applying the octahedral axiom yields 
    \begin{displaymath}
        \operatorname{cone}(\oplus_{i \in I} \eta_{E_i}) 
        \cong \operatorname{cone}(\eta_{\oplus_{i \in I} E_i}).
    \end{displaymath}
    However, the distinguished triangle 
    \begin{displaymath}
        \bigoplus_{i\in I} E_i \to \bigoplus_{i\in I} GF(E_i) \to \operatorname{cone}(\oplus_{i \in I} \eta_{E_i}) \to (\bigoplus_{i\in I} E_i)[1]
    \end{displaymath}
    is the direct sum of the distinguished triangles
    \begin{displaymath}
        E_i \xrightarrow{\eta_{E_i}} GF(E_i) \to \operatorname{cone}(\eta_{E_i}) \to E_i[1].
    \end{displaymath}
    Hence, it follows that 
    \begin{displaymath}
        \bigoplus_{i\in I} \operatorname{cone}(\eta_{E_i}) \cong \operatorname{cone}(\bigoplus_{i\in I} \eta_{E_i}).
    \end{displaymath}
    Since $\oplus_{i\in I} E_i \in \mathcal{T}_1$, $\operatorname{cone}(\eta_{\oplus_{i \in I} E_i})\in \mathcal{A}$.
    As $\mathcal{A}$ is thick, every direct summand of $\operatorname{cone}(\eta_{\oplus_{i \in I} E_i})$ is in $\mathcal{A}$. 
    Consequently, $\operatorname{cone}(\eta_{E_i}) \in \mathcal{A}$ for each $i\in I$.
    This completes the proof.
\end{proof}

\begin{setup}
    \label{setup:relatively_perfect_adjoint_pair}
    Let $S$ be a quasi-compact quasi-separated quasi-excellent algebraic space.
    Fix proper flat morphisms $f_i \colon Y_i \to S$ of Noetherian algebraic spaces for $i\in \{1,2\}$. 
    Denote by $p_i \colon Y_1\times_S Y_2 \to Y_i$ the canonical projections. 
    Choose $E\in D^b_{\operatorname{coh}}(Y_1 \times_S Y_2)$ such that $E$ is relatively perfect over each $Y_i$.
    Set $E_R := \operatorname{\mathbf{R}\mathcal{H}\!\mathit{om}}(E, p_2^! \mathcal{O}_{Y_2})$.
    By \cite[Proposition 4.19 \& Corollary 4.20]{GuisadoVillaalgordo/Lank:2026}, $E_R\in D^b_{\operatorname{coh}}(Y_1\times_S Y_2)$, $\Phi_E$ is left adjoint to $\Phi_{E_R}$, and the adjunction restricts to $D^b_{\operatorname{coh}}$. 
    Assume that $E_R$ is relatively perfect over each $Y_i$. 
    Applying \cite[Corollary 1.9]{GuisadoVillaalgordo/Lank:2026}, both $\Phi_E$ and $\Phi_{E_R}$ induce functors on $D_{\operatorname{sg}}$, which are respectively denoted as $\Phi_E^{\operatorname{sg}}$ and $\Phi_{E_R}^{\operatorname{sg}}$. 
    By \Cref{lem:restrict_adjunction_to_Verdier_quotient}, $\Phi_E^{\operatorname{sg}}$ and $\Phi_{E_R}^{\operatorname{sg}}$ form an adjoint pair.
    Write $\epsilon$ and $\eta$ respectively for the counit and unit of the adjoint pair $\Phi_E \dashv \Phi_{E_R}$ on $D_{\operatorname{qc}}$.
    By \Cref{cor:classical_generator_for_J2}, each $D^b_{\operatorname{coh}}(Y_i)$ admits a classical generator $G_i$. 
    Indeed, $f_i$ is proper, and hence, finitely presented.
    Since $S$ is quasi-excellent, \Cref{rmk:quasi-excellet_is_j-2} says $S$ is $J\textrm{-}2$, and so each $Y_i$ is $J\textrm{-}2$. 
    Moreover, for any $K\in D_{\operatorname{qc}}(Y_1\times_S Y_2)$ and $t\colon T \to S$, we let $K_t := \mathbf{L}(t^\prime)^\ast K$ where 
    \begin{displaymath}
        t^\prime \colon Y_1 \times_S Y_2 \times_S T \to Y_1 \times_S Y_2
    \end{displaymath}
    is the canonical morphism obtained by base change.
\end{setup}

\begin{lemma}
    \label{lem:fully_faithfull_left_adjoint_dsg}
    Assume \Cref{setup:relatively_perfect_adjoint_pair}.
    Then $\Phi_E^{\operatorname{sg}}$ is fully faithful on $D_{\operatorname{sg}}$ if, and only if, $\operatorname{cone}(\eta_A) \in \operatorname{Perf}(Y_1)$ for all $A\in D^b_{\operatorname{coh}}(Y_1)$. 
\end{lemma}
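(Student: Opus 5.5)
The plan is to reduce full faithfulness to the unit of the induced adjunction on the Verdier quotients. Write $\pi_1\colon D^b_{\operatorname{coh}}(Y_1)\to D_{\operatorname{sg}}(Y_1)$ for the quotient functor. In \Cref{setup:relatively_perfect_adjoint_pair} we have the adjunction $\Phi_E\dashv\Phi_{E_R}$ restricted to $D^b_{\operatorname{coh}}$. Both functors send perfect complexes to perfect complexes, since $E$ and $E_R$ are relatively perfect over each $Y_i$; this is \cite[Corollary 1.9]{GuisadoVillaalgordo/Lank:2026}. So \Cref{lem:restrict_adjunction_to_Verdier_quotient} applies with $\mathcal{T}=D^b_{\operatorname{coh}}(Y_1)$, $\mathcal{A}=\operatorname{Perf}(Y_1)$, $\mathcal{S}=D^b_{\operatorname{coh}}(Y_2)$ and $\mathcal{B}=\operatorname{Perf}(Y_2)$. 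It gives the adjunction $\Phi_E^{\operatorname{sg}}\dashv\Phi_{E_R}^{\operatorname{sg}}$, whose unit at $\pi_1(A)$ is $\overline{\eta}_{\pi_1(A)}=\pi_1(\eta_A)$.

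Next I would invoke the standard categorical fact that a left adjoint is fully faithful if and only if its unit is a natural isomorphism. Hence $\Phi_E^{\operatorname{sg}}$ is fully faithful if and only if $\pi_1(\eta_A)$ is an isomorphism in $D_{\operatorname{sg}}(Y_1)$ for every object $\pi_1(A)$. Every object of $D_{\operatorname{sg}}(Y_1)$ has the form $\pi_1(A)$ with $A\in D^b_{\operatorname{coh}}(Y_1)$, because the Verdier quotient has the same objects as $D^b_{\operatorname{coh}}(Y_1)$. So it suffices to check this condition on all $A\in D^b_{\operatorname{coh}}(Y_1)$.

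Finally, I would use that a morphism $u$ in $\mathcal{T}$ becomes an isomorphism in $\mathcal{T}/\mathcal{A}$ if and only if $\operatorname{cone}(u)\in\mathcal{A}$, provided $\mathcal{A}$ is thick. Here $\operatorname{Perf}(Y_1)$ is thick in $D^b_{\operatorname{coh}}(Y_1)$. To see the fact, apply $\pi_1$ to the triangle $A\to GF(A)\to\operatorname{cone}(\eta_A)\to A[1]$, where $F=\Phi_E$ and $G=\Phi_{E_R}$. Then $\pi_1(\eta_A)$ is an isomorphism if and only if $\pi_1(\operatorname{cone}(\eta_A))\cong 0$. Since $\mathcal{A}$ is thick, the kernel of $\pi_1$ on objects is exactly $\mathcal{A}$ \cite[\href{https://stacks.math.columbia.edu/tag/05RA}{Tag 05RA}]{StacksProject}, so this happens if and only if $\operatorname{cone}(\eta_A)$ is perfect. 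Combining the three steps gives the equivalence.

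The only step needing care is the identification of the unit of the induced adjunction with $\pi_1(\eta_A)$. That identification is supplied by \Cref{lem:restrict_adjunction_to_Verdier_quotient}, so I expect no real obstacle. The remaining points are routine: the cone of $\eta_A$ can be taken in $D^b_{\operatorname{coh}}(Y_1)$, since $\Phi_{E_R}\Phi_E$ preserves $D^b_{\operatorname{coh}}$, and the perfectness condition does not depend on the choice of cone.
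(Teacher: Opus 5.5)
Your proposal is correct and follows essentially the same route as the paper: \Cref{lem:restrict_adjunction_to_Verdier_quotient} identifies the unit of $\Phi_E^{\operatorname{sg}}\dashv\Phi_{E_R}^{\operatorname{sg}}$ with $\pi_{Y_1}^{\operatorname{sg}}(\eta_A)$, a left adjoint is fully faithful exactly when its unit is invertible, and thickness of $\operatorname{Perf}(Y_1)$ makes it the kernel of the quotient functor. Hence $\pi_{Y_1}^{\operatorname{sg}}(\eta_A)$ is an isomorphism if and only if $\operatorname{cone}(\eta_A)$ is perfect.
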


\begin{proof}
    By \Cref{lem:restrict_adjunction_to_Verdier_quotient}, $\pi^{\operatorname{sg}}_{Y_1}(\eta_A)$ is the component for the unit of $\Phi_E^{\operatorname{sg}}\dashv \Phi_{E_R}^{\operatorname{sg}}$ at any $\pi^{\operatorname{sg}}_{Y_1} (A) \in D_{\operatorname{sg}}(Y_1)$.
    Moreover, by \cite[\href{https://stacks.math.columbia.edu/tag/07RB}{Tag 07RB}]{StacksProject}, $\Phi_E^{\operatorname{sg}}$ is fully faithful if, and only if, $\pi^{\operatorname{sg}}_{Y_1}(\eta_A)$ is an isomorphism for all $\pi^{\operatorname{sg}}_{Y_1} (A) \in D_{\operatorname{sg}}(Y_1)$.
    Furthermore, $\pi^{\operatorname{sg}}_{Y_1}(\eta_A)$ is an isomorphism for all $\pi^{\operatorname{sg}}_{Y_1} (A) \in D_{\operatorname{sg}}(Y_1)$ is equivalent to $\operatorname{cone}(\eta_A)\in \operatorname{Perf}(Y_1)$ for all $A\in D^b_{\operatorname{coh}}(Y_1)$.
    Indeed, use the fact that $\operatorname{Perf}(Y_1)$ is thick, and that $\operatorname{Perf}(Y_1) = \ker (\pi^{\operatorname{sg}}_{Y_1})$ \cite[\href{https://stacks.math.columbia.edu/tag/05RI}{Tag 05RI}]{StacksProject}.
    Therefore, the desired claim follows, which completes the proof. 
\end{proof}

\begin{lemma}
    \label{lem:fully_faithfull_right_adjoint_dsg}
    Assume \Cref{setup:relatively_perfect_adjoint_pair}.
    Then $\Phi_{E_R}^{\operatorname{sg}}$ is fully faithful on $D_{\operatorname{sg}}$ if, and only if, $\operatorname{cone}(\epsilon_B) \in \operatorname{Perf}(Y_2)$ for all $B\in D^b_{\operatorname{coh}}(Y_2)$. 
\end{lemma}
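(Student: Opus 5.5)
This is the dual of \Cref{lem:fully_faithfull_left_adjoint_dsg}, and I would prove it the same way, swapping the roles of unit and counit.

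First, by \Cref{setup:relatively_perfect_adjoint_pair} and \Cref{lem:restrict_adjunction_to_Verdier_quotient}, $\Phi_E^{\operatorname{sg}} \dashv \Phi_{E_R}^{\operatorname{sg}}$ is an adjoint pair. Its counit at $\pi^{\operatorname{sg}}_{Y_2}(B)$ is $\pi^{\operatorname{sg}}_{Y_2}(\epsilon_B)$ for every $B \in D^b_{\operatorname{coh}}(Y_2)$. Here $\epsilon_B$ is the counit of $\Phi_E \dashv \Phi_{E_R}$ restricted to $D^b_{\operatorname{coh}}$, which is legitimate since the adjunction restricts to $D^b_{\operatorname{coh}}$ by \cite[Proposition 4.19 \& Corollary 4.20]{GuisadoVillaalgordo/Lank:2026}.

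Second, I would invoke the standard fact \cite[\href{https://stacks.math.columbia.edu/tag/07RB}{Tag 07RB}]{StacksProject}: a right adjoint is fully faithful if and only if the counit is an isomorphism. Thus $\Phi_{E_R}^{\operatorname{sg}}$ is fully faithful if and only if $\pi^{\operatorname{sg}}_{Y_2}(\epsilon_B)$ is an isomorphism for all $B \in D^b_{\operatorname{coh}}(Y_2)$. Since every object of $D_{\operatorname{sg}}(Y_2)$ is of the form $\pi^{\operatorname{sg}}_{Y_2}(B)$, no objects are missed.

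Third, I would translate this condition into perfectness of the cone. Because $\operatorname{Perf}(Y_2)$ is thick and equals $\ker(\pi^{\operatorname{sg}}_{Y_2})$ \cite[\href{https://stacks.math.columbia.edu/tag/05RI}{Tag 05RI}]{StacksProject}, a morphism $s$ in $D^b_{\operatorname{coh}}(Y_2)$ becomes an isomorphism in the Verdier quotient exactly when $\operatorname{cone}(s) \in \operatorname{Perf}(Y_2)$. One direction holds because $\pi^{\operatorname{sg}}_{Y_2}$ sends the triangle on $s$ to a triangle whose third term is zero. For the converse, if $\pi^{\operatorname{sg}}_{Y_2}(s)$ is invertible, then $\pi^{\operatorname{sg}}_{Y_2}(\operatorname{cone}(s)) \cong 0$, so $\operatorname{cone}(s)$ lies in the kernel, which is $\operatorname{Perf}(Y_2)$ precisely because that subcategory is thick. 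Applying this with $s = \epsilon_B$ gives the statement.

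There is no real obstacle; the only point needing care is the last step, where thickness of $\operatorname{Perf}(Y_2)$ is what lets us identify the kernel of the quotient functor with $\operatorname{Perf}(Y_2)$ itself rather than its thick closure.
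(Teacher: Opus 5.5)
Your proposal is correct and is exactly the argument the paper intends: the paper proves this lemma by declaring it dual to \Cref{lem:fully_faithfull_left_adjoint_dsg}, whose proof uses the same three steps you give (identify the induced counit as $\pi^{\operatorname{sg}}_{Y_2}(\epsilon_B)$ via \Cref{lem:restrict_adjunction_to_Verdier_quotient}, use that a right adjoint is fully faithful iff the counit is an isomorphism, and use that $\ker(\pi^{\operatorname{sg}}_{Y_2}) = \operatorname{Perf}(Y_2)$ by thickness). Nothing is missing.
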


\begin{proof}
    This is dual to \Cref{lem:fully_faithfull_left_adjoint_dsg}.
\end{proof}

\begin{remark}
    \label{rmk:serre_subcategory}
    Let $X$ be a locally Noetherian algebraic space. 
    In this case, $\operatorname{coh}(X)$ is a Serre subcategory of $\operatorname{Qcoh}(X)$.
    Indeed, apply \cite[\href{https://stacks.math.columbia.edu/tag/02MP}{Tags 02MP}, \href{https://stacks.math.columbia.edu/tag/07UC}{07UC}, \& \href{https://stacks.math.columbia.edu/tag/07UD}{07UD}]{StacksProject}.
    Denote by $D_{\operatorname{coh}}(X)$ the strictly full subcategory of $D(X)$ consisting of objects with coherent cohomology. 
    There exists a canonical functor 
    \begin{displaymath}
        D(\operatorname{coh}(X)) \to D_{\operatorname{coh}}(X).
    \end{displaymath}
    See \cite[\href{https://stacks.math.columbia.edu/tag/06UP}{Tag 06UP}]{StacksProject}.
\end{remark}

\begin{lemma}
    \label{lem:enhanced_bounded above_equivalence}
    Let $X$ be a Noetherian algebraic space. 
    Then the functor 
    \begin{displaymath}
        D^-(\operatorname{coh}(X)) \to D^-_{\operatorname{coh}(X)}(\operatorname{Qcoh}(X))
    \end{displaymath}
    is an equivalence.
\end{lemma}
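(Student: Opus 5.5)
We prove this with the standard criterion for when a derived category of a Serre subcategory embeds fully faithfully. Recall that if $\mathcal{B}\subseteq\mathcal{A}$ is a Serre subcategory of an abelian category, then $D^-(\mathcal{B})\to D^-_{\mathcal{B}}(\mathcal{A})$ is an equivalence provided the following condition holds: for every epimorphism $A\twoheadrightarrow B$ in $\mathcal{A}$ with $B\in\mathcal{B}$, there exists $B'\in\mathcal{B}$ and a morphism $B'\to A$ whose composite $B'\to B$ is still an epimorphism. This is \cite[\href{https://stacks.math.columbia.edu/tag/0FCL}{Tag 0FCL}]{StacksProject}, whose proof constructs bounded above resolutions by objects of $\mathcal{B}$ term by term. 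By \Cref{rmk:serre_subcategory}, $\operatorname{coh}(X)$ is a Serre subcategory of $\operatorname{Qcoh}(X)$ when $X$ is Noetherian. So it suffices to verify this lifting condition for $\mathcal{A}=\operatorname{Qcoh}(X)$ and $\mathcal{B}=\operatorname{coh}(X)$.

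To verify it, let $\phi\colon F\twoheadrightarrow M$ be a surjection with $F$ quasi-coherent and $M$ coherent. Since $X$ is a quasi-compact quasi-separated algebraic space, $F$ is the filtered colimit of its finite type quasi-coherent subsheaves \cite[\href{https://stacks.math.columbia.edu/tag/07UV}{Tag 07UV}]{StacksProject}. As $X$ is locally Noetherian, these subsheaves are coherent \cite[\href{https://stacks.math.columbia.edu/tag/07UB}{Tag 07UB}]{StacksProject}. Write $F=\operatorname{colim}_\lambda F_\lambda$. Then $M=\operatorname{colim}_\lambda \phi(F_\lambda)$ is an increasing filtered union of quasi-coherent subsheaves of $M$. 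Because $M$ is coherent and $X$ is Noetherian, $M$ satisfies the ascending chain condition on quasi-coherent subsheaves. Equivalently, a finite type quasi-coherent sheaf that is a filtered union of subsheaves equals one of them, which one can check on an affine étale presentation using quasi-compactness. Hence $\phi(F_\lambda)=M$ for some $\lambda$. Taking $B'=F_\lambda$ gives the required coherent lift.

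Full faithfulness and essential surjectivity then follow formally from the cited lemma. If one prefers to argue directly, one resolves any $K\in D^-_{\operatorname{coh}}(\operatorname{Qcoh}(X))$ by a bounded above complex of coherent sheaves quasi-isomorphic to it. This is done by descending induction on degrees, using the lifting property at each step together with the fact that kernels of maps between coherent sheaves are coherent. The same construction shows that quasi-isomorphisms into objects of $D^-(\operatorname{coh}(X))$ can be refined, which yields full faithfulness via the calculus of fractions.

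The main obstacle is the Noetherian-space input in the second step. We need that every quasi-coherent sheaf on a Noetherian algebraic space is the union of its coherent subsheaves, and that coherent sheaves have the ascending chain condition on quasi-coherent subsheaves. For algebraic spaces rather than schemes, these must be taken from the Stacks Project's treatment of quasi-coherent sheaves on spaces rather than proven by hand. The rest is formal.
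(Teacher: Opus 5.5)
Your proposal is correct and is essentially the paper's proof: both invoke \Cref{rmk:serre_subcategory} and \cite[\href{https://stacks.math.columbia.edu/tag/0FCL}{Tag 0FCL}]{StacksProject}, and both verify the lifting condition by writing the quasi-coherent source as a filtered colimit of coherent subsheaves. The only difference is that the paper cites \cite[\href{https://stacks.math.columbia.edu/tag/07V9}{Tag 07V9}]{StacksProject} for this colimit directly, and it leaves implicit the finite-type argument you spell out for why a single coherent subsheaf already surjects onto the coherent target.
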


\begin{proof}
    By \Cref{rmk:serre_subcategory}, $\operatorname{coh}(X)$ is a Serre subcategory of $\operatorname{Qcoh}(X)$.
    Moreover, if $G\to F$ is a surjection from a quasi-coherent $\mathcal{O}_X$-module to a coherent $\mathcal{O}_X$-module, then there exists a coherent submodule $G^\prime \subseteq G$ which surjects onto $F$.
    Indeed, by \cite[\href{https://stacks.math.columbia.edu/tag/07V9}{Tag 07V9}]{StacksProject}, $G$ is the filtered colimit of its coherent submodules.  Choosing one of these provides the required $G^\prime$. 
    Thus, the claim follows from \cite[\href{https://stacks.math.columbia.edu/tag/0FCL}{Tag 0FCL}]{StacksProject}.
\end{proof}

\begin{lemma}
    \label{lem:enhanced_to_D_coh_equivalences}
    Let $X$ be a Noetherian algebraic space. 
    Then the functors 
    \begin{displaymath}
        D^-(\operatorname{coh}(X)) \to D^-_{\operatorname{coh}}(X)
    \end{displaymath}
    and
    \begin{displaymath}
        D^b(\operatorname{coh}(X)) \to D^b_{\operatorname{coh}}(X)
    \end{displaymath}
    are equivalences.
\end{lemma}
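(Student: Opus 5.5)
The plan is to factor the functor through the equivalence of \Cref{lem:enhanced_bounded above_equivalence}. The canonical functor $D^-(\operatorname{coh}(X)) \to D^-_{\operatorname{coh}}(X)$ is the composite
\begin{displaymath}
    D^-(\operatorname{coh}(X)) \to D^-_{\operatorname{coh}(X)}(\operatorname{Qcoh}(X)) \to D^-_{\operatorname{coh}}(X).
\end{displaymath}
The second arrow is the restriction of the canonical functor $D(\operatorname{Qcoh}(X)) \to D_{\operatorname{qc}}(X)$. The first arrow is an equivalence by \Cref{lem:enhanced_bounded above_equivalence}, so it suffices to prove that the second arrow is an equivalence.

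For this I will use the comparison $D(\operatorname{Qcoh}(X)) \to D_{\operatorname{qc}}(X)$ for quasi-compact quasi-separated algebraic spaces. The Stacks Project shows that this functor is an equivalence on bounded below objects for quasi-separated quasi-compact spaces. In the Noetherian case (finite cohomological dimension of $\Gamma$ on quasi-coherent modules over affine étale pieces), it is an equivalence on all of $D$; see the Stacks Project results on $D(\operatorname{Qcoh}(X)) \to D_{\operatorname{qc}}(X)$ for algebraic spaces. This is the main obstacle: I need to locate and verify the precise tag in the algebraic-space setting that gives the bounded-above (or unbounded) statement.

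If only the bounded-below version is available, I would argue by truncations instead. Any object of $D^-_{\operatorname{coh}}(X)$ is a homotopy colimit of its truncations $\tau^{\geq -n}$, and each truncation lies in the bounded part, where the functor is an equivalence. Full faithfulness then follows by comparing $\operatorname{Hom}$'s. This uses that $\operatorname{Hom}(A,B)$ depends only on $\tau^{\geq -n}A$ for $n \gg 0$ when $B$ is bounded below, and a Milnor-sequence argument handles $B$ merely bounded above. Essential surjectivity follows by lifting the inverse system of truncations and taking a derived limit in $D(\operatorname{Qcoh}(X))$.

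Once this is in place, the functor preserves the coherence conditions on cohomology sheaves: the cohomology of an object of $D(\operatorname{Qcoh}(X))$ computed in $\operatorname{Qcoh}(X)$ agrees with its cohomology in $\operatorname{Mod}(X)$, since $\operatorname{Qcoh}(X) \subseteq \operatorname{Mod}(X)$ is exact. Hence it restricts to an equivalence $D^-_{\operatorname{coh}(X)}(\operatorname{Qcoh}(X)) \simeq D^-_{\operatorname{coh}}(X)$, which proves the first claim.

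The bounded statement then follows formally. The composite equivalence is compatible with cohomology, so an object of $D^-(\operatorname{coh}(X))$ has bounded cohomology if, and only if, its image does. The equivalence therefore restricts to the full subcategories of bounded-cohomology objects. The identification $D^b(\operatorname{coh}(X)) \simeq D^-(\operatorname{coh}(X))^{\text{bounded cohomology}}$ is standard via truncation. On the other side, $D^b_{\operatorname{coh}}(X)$ is, by the definition in \Cref{sec:prelim_categories}, the bounded objects with coherent cohomology, which uses that pseudocoherence is equivalent to coherent cohomology in the Noetherian case.
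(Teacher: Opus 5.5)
Your proposal is correct and is essentially the paper's proof. The paper uses the same two inputs: \Cref{lem:enhanced_bounded above_equivalence} and the equivalence $D(\operatorname{Qcoh}(X)) \to D_{\operatorname{qc}}(X)$ from \cite[\href{https://stacks.math.columbia.edu/tag/09TN}{Tag 09TN}]{StacksProject}. It places them in a commutative square and reads off full faithfulness and essential surjectivity of the top arrow, which amounts to your factorization into two equivalences. That equivalence holds on all of $D$ for any quasi-compact quasi-separated algebraic space, so it restricts directly to bounded above objects, and your truncation fallback is unnecessary. The fallback also contains a slip as written: the $\tau^{\geq -n}E$ form an inverse system, so $E$ would be a homotopy limit of them, not a homotopy colimit.
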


\begin{proof}
    Consider the commutative diagram 
    \begin{displaymath}
        \begin{tikzcd}
            {D^-(\operatorname{coh}(X))} & {D^-_{\operatorname{coh}}(X)} \\
            {D^-(\operatorname{Qcoh}(X))} & {D^-_{\operatorname{qc}}(X).}
            \arrow[from=1-1, to=1-2]
            \arrow[from=1-1, to=2-1]
            \arrow[from=1-2, to=2-2]
            \arrow[from=2-1, to=2-2]
        \end{tikzcd}
    \end{displaymath}
    By \Cref{lem:enhanced_bounded above_equivalence}, the left vertical functor is fully faithful.
    By \cite[\href{https://stacks.math.columbia.edu/tag/09TN}{Tag 09TN}]{StacksProject}, the bottom functor is an equivalence. 
    By construction, the right vertical arrow is fully faithful. 
    Hence, the top functor is fully faithful. 
    If $E\in D^-_{\operatorname{coh}}(X)$, the object $E^\prime \in D^-(\operatorname{Qcoh}(X))$ which corresponds to it by \cite[\href{https://stacks.math.columbia.edu/tag/09TN}{Tag 09TN}]{StacksProject} will have coherent cohomology.
    Hence, by \Cref{lem:enhanced_bounded above_equivalence}, $E^\prime$ is in the essential image of the left vertical arrow.
    Consequently, the top horizontal arrow is essentially surjective, which finishes the proof for the bounded above case. 
    The bounded case follows immediately from the bounded above case.
\end{proof}

\begin{lemma}
    \label{lem:counit_isomorphism_for_localization}
    Let $f\colon Y \to X$ be a finitely presented morphism to an affine Noetherian scheme from an algebraic space.
    Denote by $l_p \colon \operatorname{Spec}(\mathcal{O}_{X,p}) \to X$ the canonical morphism induced by some $p\in |X|$. 
    Form the fibered square
    \begin{displaymath}
        \begin{tikzcd}
            {Y\times_X \operatorname{Spec}(\mathcal{O}_{X,p}) } & {\operatorname{Spec}(\mathcal{O}_{X,p}) } \\
            Y & {X.}
            \arrow["{f^\prime}", from=1-1, to=1-2]
            \arrow["{g_p}"', from=1-1, to=2-1]
            \arrow["{l_p}", from=1-2, to=2-2]
            \arrow["f"', from=2-1, to=2-2]
        \end{tikzcd}
    \end{displaymath}
    Then the counit $\mathbf{L}g_p^\ast \mathbf{R}(g_p)_\ast \to 1$ is an isomorphism on $D_{\operatorname{qc}}(Y\times_X\operatorname{Spec}(\mathcal{O}_{X,p}))$.
    Moreover,  
    \begin{displaymath}
        \mathbf{L}g_p^\ast \colon D^b_{\operatorname{coh}}(Y) \to D^b_{\operatorname{coh}}(Y\times_X\operatorname{Spec}(\mathcal{O}_{X,p}))
    \end{displaymath}
    is essentially surjective.
\end{lemma}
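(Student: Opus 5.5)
The plan is to derive both claims from the fact that $g_p$ is a quasi-affine, in fact flat, localization morphism. First, $l_p\colon \operatorname{Spec}(\mathcal{O}_{X,p})\to X$ is flat and affine, and $\mathcal{O}_{X,p}\otimes_{\mathcal{O}(X)}\mathcal{O}_{X,p}\cong\mathcal{O}_{X,p}$. So $l_p$ is a monomorphism of schemes, and the counit $\mathbf{L}l_p^\ast\mathbf{R}(l_p)_\ast\to 1$ is an isomorphism on $D_{\operatorname{qc}}(\operatorname{Spec}(\mathcal{O}_{X,p}))$. Concretely, for an $\mathcal{O}_{X,p}$-module $M$, restricting scalars to $\mathcal{O}(X)$ and tensoring back gives $M\otimes_{\mathcal{O}(X)}\mathcal{O}_{X,p}\cong M$. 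Both properties are stable under base change: $g_p$ is affine (hence quasi-affine), flat, and a monomorphism, since $Y\times_X\operatorname{Spec}(\mathcal{O}_{X,p})\times_Y(Y\times_X\operatorname{Spec}(\mathcal{O}_{X,p}))\cong Y\times_X\operatorname{Spec}(\mathcal{O}_{X,p}\otimes\mathcal{O}_{X,p})$.

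For the counit claim, I would use flat base change for the affine morphism $g_p$ and the cartesian square obtained by pulling $g_p$ back along itself. Its diagonal is an isomorphism because $g_p$ is a monomorphism. This gives $\mathbf{L}g_p^\ast\mathbf{R}(g_p)_\ast K\cong \mathbf{R}(\mathrm{pr}_2)_\ast\mathbf{L}\mathrm{pr}_1^\ast K\cong K$, and one checks that the composite is the counit. Alternatively, one can check étale locally on $Y$. Take an étale presentation $V\to Y$ with $V$ affine. Since $g_p$ is affine, pullback and pushforward commute with this flat base change, so the statement reduces to affine schemes. There the counit is $M\otimes_{\mathcal{O}(X)}\mathcal{O}_{X,p}\to M$ for an $\mathcal{O}(V)\otimes\mathcal{O}_{X,p}$-module $M$, which is an isomorphism since $M$ is already $\mathcal{O}_{X,p}$-local. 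Flatness makes the derived tensor underived, so the complex case follows termwise.

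For essential surjectivity, take $B\in D^b_{\operatorname{coh}}(Y\times_X\operatorname{Spec}(\mathcal{O}_{X,p}))$. The object $\mathbf{R}(g_p)_\ast B$ lies in $D^b_{\operatorname{qc}}(Y)$, and by the counit isomorphism it pulls back to $B$. The issue is that it is not coherent, and this is the main obstacle. I would write $\mathcal{O}_{X,p}=\operatorname{colim}_{s\notin p}\mathcal{O}(X)_s$ and use a spreading-out argument. By Lemma \ref{lem:enhanced_to_D_coh_equivalences}, $B$ may be represented by a bounded complex of coherent sheaves. Each coherent sheaf and each differential descends to some principal open $Y_s=f^{-1}(D(s))$, since $f$ is finitely presented and limits of finitely presented objects behave well. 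After enlarging $s$, the relations $d^2=0$ also hold over $Y_s$. This produces $B_s\in D^b_{\operatorname{coh}}(Y_s)$ pulling back to $B$.

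Finally, I would extend $B_s$ from the open subspace $Y_s\subseteq Y$ to an object of $D^b_{\operatorname{coh}}(Y)$. The Verdier localization $D^b_{\operatorname{coh}}(Y)\to D^b_{\operatorname{coh}}(Y_s)$ of \cite[Lemma A.4]{Lank:2026c}, already used in Proposition \ref{prop:j_condition_stacks}, is essentially surjective. Since $l_p$ factors through $D(s)$, this gives an object $A\in D^b_{\operatorname{coh}}(Y)$ with $\mathbf{L}g_p^\ast A\cong B$. If the descent of morphisms through the colimit proves awkward, I would fall back on \cite[Proposition 1.4]{Lank:2026c}-style essential density combined with the counit isomorphism.
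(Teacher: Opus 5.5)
Your proof is correct, but it is organized differently from the paper's. For the counit, the paper follows what you list as your alternative. It pulls back along an affine \'{e}tale presentation $U\to Y$, where the counit of the base-changed morphism is a localization of rings. It transfers this to $g_p$ via tor-independent base change and the compatibility of counits with mates \cite[Lemma D.4(2)]{GuisadoVillaalgordo/Lank:2026}. It then concludes by conservativity of pullback along an \'{e}tale surjection. Your main argument uses instead that $g_p$ is a flat affine monomorphism. Flat base change of $g_p$ along itself, whose self-fiber product is identified with its source by the diagonal, then yields the counit isomorphism directly. This is shorter and chart-free, leaving only the routine identification of the base change map with $\epsilon$.

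For essential surjectivity the routes genuinely diverge. The paper uses the first claim. A bounded complex $B_0$ of coherent sheaves representing $B$ satisfies $B_0\cong g_p^\ast(g_p)_\ast B_0$, where $(g_p)_\ast B_0$ is a bounded quasi-coherent complex on $Y$. The paper exhausts this complex by coherent subcomplexes. Exactness of $g_p^\ast$, together with finite type of each $B_0^n$ on a quasi-compact space, then produces a single coherent subcomplex $K$ with $g_p^\ast K = g_p^\ast(g_p)_\ast B_0$. This needs only that quasi-coherent sheaves are filtered unions of coherent subsheaves, and it is why the counit statement is packaged into the lemma.

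Your argument never uses the counit. You write $\operatorname{Spec}(\mathcal{O}_{X,p})=\lim_{s\notin p}D(s)$ and descend the finitely many terms, differentials and relations $d^2=0$ to some $f^{-1}(D(s))$. This uses the colimit description of finitely presented modules on a limit of quasi-compact quasi-separated algebraic spaces with affine transition maps. The relevant inputs there are quasi-compactness and quasi-separatedness, affine transitions, and Noetherianity (so that coherent means finitely presented), rather than finite presentation of $f$ as such. You then extend across the open immersion $f^{-1}(D(s))\subseteq Y$ using \cite[Lemma A.4]{Lank:2026c}. This imports heavier machinery, namely limits of algebraic spaces and extension of bounded coherent complexes from opens, but it is valid, and your fallback is unnecessary.
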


\begin{proof}
    This is motivated by the proof of \cite[Theorem 4.4]{Elagin/Lunts/Schnurer:2020}. 
    We first prove that the counit $\mathbf{L}g_p^\ast \mathbf{R}(g_p)_\ast \to 1$ is an isomorphism on $D_{\operatorname{qc}}(Y\times_X\operatorname{Spec}(\mathcal{O}_{X,p}))$.
    Let $s\colon U \to Y$ be an \'{e}tale presentation by an affine scheme. 
    Form the commutative diagram 
    \begin{displaymath}
        \begin{tikzcd}
            {U\times_X \operatorname{Spec}(\mathcal{O}_{X,p}) } & {Y\times_X \operatorname{Spec}(\mathcal{O}_{X,p}) } & {\operatorname{Spec}(\mathcal{O}_{X,p}) } \\
            U & Y & X.
            \arrow["{s^\prime}", from=1-1, to=1-2]
            \arrow["{h_p}"', from=1-1, to=2-1]
            \arrow["{f^\prime}", from=1-2, to=1-3]
            \arrow["{g_p}"', from=1-2, to=2-2]
            \arrow["{l_p}", from=1-3, to=2-3]
            \arrow["s"', from=2-1, to=2-2]
            \arrow["f"', from=2-2, to=2-3]
        \end{tikzcd}
    \end{displaymath}
    By \cite[\href{https://stacks.math.columbia.edu/tag/01SG}{Tag 01SG}]{StacksProject}, $l_p$ is an affine morphism. 
    Hence, by base change, so are $h_p$ and $g_p$.
    Moreover, $l_p$ is flat.
    Thus, $h_p$ and $g_p$ are flat and affine.
    Consequently, $h_p^\ast$ is exact on modules, while $(h_p)_\ast$ is exact on quasi-coherent modules.
    Thus $\mathbf{L}h_p^\ast=h_p^\ast$, and $\mathbf{R}(h_p)_\ast$ agrees with $(h_p)_\ast$ on complexes of quasi-coherent modules (see e.g.\ \cite[\href{https://stacks.math.columbia.edu/tag/073H}{Tags 073H}, \href{https://stacks.math.columbia.edu/tag/08GY}{08GY}, \& \href{https://stacks.math.columbia.edu/tag/09TN}{09TN}]{StacksProject}).
    The analogous statements hold for $g_p$.

    We first show that the counit
    \begin{equation}
        \label{eq:counit-local}
        h_p^\ast (h_p)_\ast E \to E
    \end{equation} 
    is an isomorphism for all $E\in D_{\operatorname{qc}} (U\times_X \operatorname{Spec}(\mathcal{O}_{X,p}))$.
    This can be checked on cohomology.
    Hence, by exactness of $h_p^\ast$ and $(h_p)_\ast$, we may assume that $E$ is a quasi-coherent sheaf.
    Indeed, apply \cite[\href{https://stacks.math.columbia.edu/tag/08GY}{Tags 08GY} \& \href{https://stacks.math.columbia.edu/tag/09TN}{09TN}]{StacksProject}.
    Since $X$ and $U$ are affine, $h_p$ is induced by a localization of rings.
    Therefore, \eqref{eq:counit-local} is an isomorphism.

    We now consider the fibered square
    \begin{displaymath}
        \begin{tikzcd}
            {U\times_X \operatorname{Spec}(\mathcal{O}_{X,p})}
                & {Y\times_X \operatorname{Spec}(\mathcal{O}_{X,p})} \\
            U & Y.
            \arrow["{s^\prime}", from=1-1, to=1-2]
            \arrow["{h_p}"', from=1-1, to=2-1]
            \arrow["{g_p}", from=1-2, to=2-2]
            \arrow["s"', from=2-1, to=2-2]
        \end{tikzcd}
    \end{displaymath}
    Denote by $\epsilon^{g_p}\colon \mathbf{L}g_p^\ast \mathbf{R}(g_p)_\ast \to 1$ and $\epsilon^{h_p}\colon \mathbf{L}h_p^\ast \mathbf{R}(h_p)_\ast \to 1$ the respective counits.
    Pseudofunctoriality of derived pullback gives a natural isomorphism
    \begin{displaymath}
        \alpha\colon
        \mathbf{L}(s^\prime)^\ast\mathbf{L}g_p^\ast
        \xrightarrow{\cong}
        \mathbf{L}h_p^\ast\mathbf{L}s^\ast.
    \end{displaymath}
    Denote its inverse by
    \begin{displaymath}
        \widetilde{\alpha}\colon
        \mathbf{L}h_p^\ast\mathbf{L}s^\ast
        \xrightarrow{\cong}
        \mathbf{L}(s^\prime)^\ast\mathbf{L}g_p^\ast.
    \end{displaymath}
    By the construction of the pullback--pushforward base change morphism
    (see \cite[Appendix D]{GuisadoVillaalgordo/Lank:2026} or \cite[\href{https://stacks.math.columbia.edu/tag/07A7}{Tag 07A7}]{StacksProject}),
    \begin{displaymath}
        \beta\colon
        \mathbf{L}s^\ast\mathbf{R}(g_p)_\ast
        \longrightarrow
        \mathbf{R}(h_p)_\ast\mathbf{L}(s^\prime)^\ast
    \end{displaymath}
    is the right mate of $\widetilde{\alpha}$ with respect to the adjunctions
    \begin{displaymath}
        \mathbf{L}g_p^\ast\dashv\mathbf{R}(g_p)_\ast
        \qquad\text{and}\qquad
        \mathbf{L}h_p^\ast\dashv\mathbf{R}(h_p)_\ast.
    \end{displaymath}
    By tor-independent base change
    \cite[\href{https://stacks.math.columbia.edu/tag/08IR}{Tag 08IR}]{StacksProject},
    $\beta$ is an isomorphism.

    Applying \cite[Lemma D.4(2)]{GuisadoVillaalgordo/Lank:2026}
    to the mate pair $(\widetilde{\alpha},\beta)$, and using
    $\widetilde{\alpha}=\alpha^{-1}$, gives a commutative diagram
    \begin{displaymath}
        \begin{tikzcd}
            {\mathbf{L}(s^\prime)^\ast\mathbf{L}g_p^\ast\mathbf{R}(g_p)_\ast E} & {\mathbf{L}h_p^\ast\mathbf{R}(h_p)_\ast\mathbf{L}(s^\prime)^\ast E} \\
            {\mathbf{L}(s^\prime)^\ast E} & {\mathbf{L}(s^\prime)^\ast E.}
            \arrow["\cong", from=1-1, to=1-2]
            \arrow["{\mathbf{L}(s^\prime)^\ast(\epsilon^{g_p}_E)}"', from=1-1, to=2-1]
            \arrow["{\epsilon^{h_p}_{\mathbf{L}(s^\prime)^\ast E}}", from=1-2, to=2-2]
            \arrow[equals, from=2-1, to=2-2]
        \end{tikzcd}
    \end{displaymath}
    Here the upper horizontal morphism is the composite
    \begin{displaymath}
        \mathbf{L}(s^\prime)^\ast\mathbf{L}g_p^\ast\mathbf{R}(g_p)_\ast
        \xrightarrow{\alpha\,\mathbf{R}(g_p)_\ast}
        \mathbf{L}h_p^\ast\mathbf{L}s^\ast\mathbf{R}(g_p)_\ast
        \xrightarrow{\mathbf{L}h_p^\ast(\beta)}
        \mathbf{L}h_p^\ast\mathbf{R}(h_p)_\ast
        \mathbf{L}(s^\prime)^\ast.
    \end{displaymath}
    By \eqref{eq:counit-local}, the right vertical morphism is an isomorphism.
    Hence, $\mathbf{L}(s^\prime)^\ast(\epsilon^{g_p}_E)$ is an isomorphism.
    Base change implies $s^\prime$ is \'{e}tale and surjective.
    Then \cite[Lemma 2.14]{GuisadoVillaalgordo/Lank:2026} says $\mathbf{L}(s^\prime)^\ast$ is conservative.
    Therefore, $\epsilon^{g_p}_E$ is an isomorphism, which proves the first claim.

    Next, we prove the second claim.
    Since $g_p$ is flat, $g_p^\ast$ is exact, and so the canonical morphism $\mathbf{L}g_p^\ast \to g_p^\ast$ is an isomorphism \cite[\href{https://stacks.math.columbia.edu/tag/015F}{Tags 015F} \& \href{https://stacks.math.columbia.edu/tag/0735}{0735}]{StacksProject}.
    We first establish the following observation:  
    Suppose $M$ is a bounded complex of quasi-coherent $\mathcal{O}_Y$-modules such that $g_p^\ast M$ is a bounded complex of coherent $\mathcal{O}_{Y\times_X \operatorname{Spec}(\mathcal{O}_{X,p})}$-modules. 
    Then there exists a subcomplex $K \subseteq M$ consisting of coherent $\mathcal{O}_Y$-modules such that $g_p^\ast K = g_p^\ast M$. 
    Indeed, every quasi-coherent $\mathcal{O}_Y$-module is the filtered colimit of its coherent $\mathcal{O}_Y$-submodules \cite[\href{https://stacks.math.columbia.edu/tag/07V9}{Tag 07V9}]{StacksProject}. 
    Consequently, every complex $M$ of quasi-coherent $\mathcal{O}_Y$-modules is a filtered colimit of subcomplexes consisting of coherent $\mathcal{O}_Y$-modules. 
    Indeed, each $M^n$ is a filtered colimit of coherent $\mathcal{O}_Y$-submodules. 
    If $N^n \subseteq M^n$ is a coherent $\mathcal{O}_Y$-submodule for each $n \in \mathbb Z$, then $N^n + d(N^{n-1}) \subseteq M^n$ defines a subcomplex of $M$ consisting of coherent $\mathcal{O}_Y$-modules.
    Write $M = \varinjlim_i M_i$ as a filtered colimit of such subcomplexes. Since $g_p^\ast$ commutes with filtered colimits, $g_p^\ast M \cong \varinjlim_i g_p^\ast M_i$.
    Hence, for each $n \in \mathbb Z$, $(g_p^\ast M)^n \cong \varinjlim_i g_p^\ast(M_i^n)$. 
    As $(g_p^\ast M)^n$ is coherent, there exists some $i$ such that $(g_p^\ast M)^n = g_p^\ast(M_i^n)$.
    Indeed, $g^\ast_p$ is exact, so the $g_p^\ast(M_i^n)$ form a filtered family of quasi-coherent subsheaves of $(g_p^\ast M)^n$ whose union is $(g_p^\ast M)^n$.
    As $(g_p^\ast M)^n$ is of finite type and $Y\times_X \operatorname{Spec}(\mathcal{O}_{X,p})$ is quasi-compact, one of them is all of $(g_p^\ast M)^n$. 
    Since $M$ is bounded, we may choose a single $i$ such that this equality holds for every $n \in \mathbb Z$. Setting $K := M_i$, we obtain $\mathbf{L}g_p^\ast K = g_p^\ast M$ as desired.
    
    We now complete the proof of the second claim.
    Let $E\in D^b_{\operatorname{coh}}(Y\times_X \operatorname{Spec}(\mathcal{O}_{X,p}))$.
    By \Cref{lem:enhanced_to_D_coh_equivalences}, there exists 
    \begin{displaymath}
        E_0 \in D^b(\operatorname{coh}(Y\times_X \operatorname{Spec}(\mathcal{O}_{X,p})))
    \end{displaymath}
    such that $E_0 \cong E$. 
    As discussed earlier, $\mathbf{R}(g_p)_\ast E_0$ can be represented by applying $(g_p)_\ast$ component-wise. 
    Hence, $(g_p)_\ast E_0$ is a bounded complex of quasi-coherent $\mathcal{O}_Y$-modules. 
    The counit $g_p^\ast (g_p)_\ast E_0 \to E_0$ is an isomorphism. 
    Then, by the observation above, there exists a bounded complex of coherent $\mathcal{O}_Y$-modules $A$ such that $\mathbf{L}g_p^\ast A \cong E_0$. 
    Since $A\in D^b_{\operatorname{coh}}(Y)$, $\mathbf{L}g_p^\ast$ is essentially surjective.
\end{proof}

\begin{lemma}
    \label{lem:perfect_over_local_ring_iff_perfect_over_closed_fiber}
    Let $(R,\mathfrak{m},k)$ be a Noetherian local ring. 
    Suppose $f\colon Y \to \operatorname{Spec}(R)$ is a proper morphism of algebraic spaces. 
    Denote by $i\colon \operatorname{Spec}(k)\to \operatorname{Spec}(R)$ the canonical closed immersion. 
    Form the fibered square
    \begin{displaymath}
        \begin{tikzcd}
            {Y\times_{\operatorname{Spec}(R)} \operatorname{Spec}(k)} & {\operatorname{Spec}(k)} \\
            Y & {\operatorname{Spec}(R).}
            \arrow["{f^\prime}", from=1-1, to=1-2]
            \arrow["{i^\prime}"', from=1-1, to=2-1]
            \arrow["i", from=1-2, to=2-2]
            \arrow["f"', from=2-1, to=2-2]
        \end{tikzcd}
    \end{displaymath}
    Then $E\in D^b_{\operatorname{coh}}(Y)$ is perfect if, and only if, $\mathbf{L}(i^\prime)^\ast E\in \operatorname{Perf}(Y\times_{\operatorname{Spec}(R)} \operatorname{Spec}(k))$. 
\end{lemma}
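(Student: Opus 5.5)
The forward direction is immediate: derived pullback preserves perfect complexes, so if $E$ is perfect then $\mathbf{L}(i^\prime)^\ast E$ is perfect. The real content is the converse. I would reduce it to a pointwise statement about the closed fiber. Set $Y_0 := Y\times_{\operatorname{Spec}(R)}\operatorname{Spec}(k)$. For $E\in D^b_{\operatorname{coh}}(Y)$ on a Noetherian algebraic space, perfectness is étale local. After choosing an étale presentation $U\to Y$ by an affine scheme, it amounts to the stalks of the pullback having finite projective dimension. So the locus of points of $|Y|$ where $E$ fails to be perfect is a subset $Z\subseteq |Y|$, and we want to show $Z$ is empty.

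The first step is to show that $Z$ is closed. On the affine scheme $U$, the pullback $E_U$ lies in $D^b_{\operatorname{coh}}$. The locus where $E_U$ is not perfect is closed: locally $E_U$ is pseudo-coherent, so one can take a resolution by finite free modules, and non-perfectness at a point is detected by nonvanishing of a coherent $\mathcal{H}^{-n}$ of $E_U\otimes^{\mathbf{L}}\kappa$, or equivalently by a suitable truncation syzygy failing to be locally free. Its image under the open map $|U|\to|Y|$ is then the complement of an open set. Since $f$ is proper, $|f|(Z)$ is closed in $\operatorname{Spec}(R)$. If $Z\neq\emptyset$, then $|f|(Z)$ is a nonempty closed subset of a local scheme, so it contains the closed point $\mathfrak{m}$. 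Hence there is a point $y\in Z$ lying over $\mathfrak{m}$, that is, $y\in|Y_0|$.

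The second step, which I expect to be the main obstacle, is the local algebra at such a point. Pass to an étale neighborhood, so that $A:=\mathcal{O}_{U,u}$ is a Noetherian local ring, flat or not over $R$, with $\mathfrak{m}A\subseteq\mathfrak{m}_A$. The derived fiber $\mathbf{L}(i^\prime)^\ast E$ has stalk $M\otimes^{\mathbf{L}}_R k\cong M\otimes^{\mathbf{L}}_A (A\otimes^{\mathbf{L}}_R k)$, where $M:=E_u$. Flatness of $f$ is not assumed, so $A\otimes^{\mathbf{L}}_R k$ is a derived ring and not simply $A/\mathfrak{m}A$; I must be careful at this point. The cleanest route is to use that perfectness over $A$ is detected by $M\otimes^{\mathbf{L}}_A \kappa(u)$ being bounded. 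Then I write
\begin{displaymath}
M\otimes^{\mathbf{L}}_A\kappa(u)\cong (M\otimes^{\mathbf{L}}_R k)\otimes^{\mathbf{L}}_{A\otimes^{\mathbf{L}}_R k}\kappa(u).
\end{displaymath}
The hypothesis gives that $M\otimes^{\mathbf{L}}_R k$ is perfect over the structure sheaf of the derived fiber. Its stalk at $u$ is therefore finitely built from $A\otimes^{\mathbf{L}}_R k$, so tensoring down to $\kappa(u)$ yields something finitely built from $\kappa(u)$, hence bounded. This gives perfectness of $M$ at $u$, contradicting $u\in Z$.

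If the paper intends $Y_0$ as the classical fiber $Y\times_{\operatorname{Spec}(R)}\operatorname{Spec}(k)$ with $\mathbf{L}(i^\prime)^\ast$ the derived pullback along the closed immersion $i^\prime$, then the same computation applies with $A/\mathfrak{m}A$. In that case I would use the standard Nakayama-type criterion: $M\in D^b_{\operatorname{coh}}(A)$ is perfect if $M\otimes^{\mathbf{L}}_A A/I$ is perfect over $A/I$ for an ideal $I\subseteq\mathfrak{m}_A$. This follows by computing $M\otimes^{\mathbf{L}}_A\kappa(u)$ through $A/I$ and using the fact that a minimal free resolution has Betti numbers $\dim\operatorname{Tor}^A_n(M,\kappa(u))$. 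Either way, $Z=\emptyset$, and $E$ is perfect.
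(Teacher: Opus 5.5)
Your proposal is correct and is essentially the paper's argument. In both, properness of $f$ forces the relevant points into the closed fiber: all closed points of $|Y|$ in the paper, a point of the closed non-perfect locus in yours. Perfectness is then detected by boundedness of the derived fiber at such points. The paper cites this criterion as \cite[Proposition 3.12]{GuisadoVillaalgordo/Lank:2026}; you reprove it via openness of the perfect locus and the isomorphism $M\otimes^{\mathbf{L}}_A\kappa(u)\cong(M\otimes^{\mathbf{L}}_A A/\mathfrak{m}A)\otimes^{\mathbf{L}}_{A/\mathfrak{m}A}\kappa(u)$.

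One correction: the fiber product here is the classical one, so the stalk of $\mathbf{L}(i^\prime)^\ast E$ at $u$ is $M\otimes^{\mathbf{L}}_A A/\mathfrak{m}A$. It is not $M\otimes^{\mathbf{L}}_R k$, which in general differs when $A$ is not $R$-flat. Drop the derived-ring detour; only your final-paragraph version of the local step is needed.
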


\begin{proof}
    If $E$ is perfect, then $\mathbf{L}(i^\prime)^\ast E$ is perfect \cite[\href{https://stacks.math.columbia.edu/tag/08H6}{Tag 08H6}]{StacksProject}, and so 
    \begin{displaymath}
        \mathbf{L}(i^\prime)^\ast E
        \in\operatorname{Perf}(Y\times_{\operatorname{Spec}(R)} \operatorname{Spec}(k)).
    \end{displaymath}
    Conversely, suppose that $\mathbf{L}(i^\prime)^\ast E$ is perfect.
    As $f$ is proper, $|f|$ is a closed morphism.
    Choose any closed point $q\in |Y|$. 
    Then $|f|(q) \in |\operatorname{Spec}(R)|$ is the unique closed point. 
    Hence, by \cite[\href{https://stacks.math.columbia.edu/tag/03H4}{Tag 03H4}]{StacksProject}, every closed point of $|Y|$ is contained in 
    \begin{displaymath}
        |i^\prime| (|Y\times_{\operatorname{Spec}(R)} \operatorname{Spec}(k)|).
    \end{displaymath}
    Indeed, $q \in |Y|$ is an arbitrary closed point, and it lies in the inverse image of the unique closed point of $\operatorname{Spec}(R)$ along $f$. 
    Applying \cite[\href{https://stacks.math.columbia.edu/tag/04CD}{Tag 04CD}]{StacksProject}, it follows that $q \in |Y|$ can be represented by a morphism of the form 
    \begin{displaymath}
        t_q \colon \operatorname{Spec}(k^\prime)\to Y\times_{\operatorname{Spec}(R)} \operatorname{Spec}(k) \xrightarrow{i^\prime} Y.
    \end{displaymath} 
    To see this, use that $|i^\prime|$ is a homeomorphism onto a closed subset of $|Y|$. 
    Since
    \begin{displaymath}
        \mathbf{L}(i^\prime)^\ast E\in \operatorname{Perf}(Y\times_{\operatorname{Spec}(R)} \operatorname{Spec}(k))
    \end{displaymath}
    it follows that $\mathbf{L}t_q^\ast E\in D^b_{\operatorname{coh}}(k^\prime)$. 
    By \cite[Proposition 3.12]{GuisadoVillaalgordo/Lank:2026}, $E$ is perfect. 
\end{proof}

\begin{lemma}
    \label{lem:fully_faithful_iff_cone_of_classical_generator_is_perfect}
    Assume \Cref{setup:relatively_perfect_adjoint_pair}.
    Then $\Phi^{\operatorname{sg}}_E$ (resp.\ $\Phi^{\operatorname{sg}}_{E_R}$) is fully faithful if, and only if, $\operatorname{cone}(\eta_{G_1})\in \operatorname{Perf}(Y_1)$ (resp.\ $\operatorname{cone}(\epsilon_{G_2})\in \operatorname{Perf}(Y_2)$).
\end{lemma}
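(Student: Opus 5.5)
The plan is to combine \Cref{lem:fully_faithfull_left_adjoint_dsg} with the thickness statement in \Cref{lem:restrict_adjunction_to_Verdier_quotient}. By \Cref{lem:fully_faithfull_left_adjoint_dsg}, $\Phi^{\operatorname{sg}}_E$ is fully faithful if, and only if, $\operatorname{cone}(\eta_A)\in\operatorname{Perf}(Y_1)$ for every $A\in D^b_{\operatorname{coh}}(Y_1)$. One direction is then immediate: since $G_1\in D^b_{\operatorname{coh}}(Y_1)$, full faithfulness forces $\operatorname{cone}(\eta_{G_1})\in\operatorname{Perf}(Y_1)$.

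For the converse, I would apply the last part of \Cref{lem:restrict_adjunction_to_Verdier_quotient}. Take $\mathcal{T}=D^b_{\operatorname{coh}}(Y_1)$, $\mathcal{S}=D^b_{\operatorname{coh}}(Y_2)$, $\mathcal{A}=\operatorname{Perf}(Y_1)$, $\mathcal{B}=\operatorname{Perf}(Y_2)$, $F=\Phi_E$ and $G=\Phi_{E_R}$. This is legitimate because \Cref{setup:relatively_perfect_adjoint_pair} guarantees two things: the adjunction $\Phi_E\dashv\Phi_{E_R}$ restricts to $D^b_{\operatorname{coh}}$, and both functors preserve perfect complexes, the latter since $E$ and $E_R$ are relatively perfect over each $Y_i$.

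The lemma then says that the strictly full subcategory $\mathcal{T}_1$ of objects $A$ with $\operatorname{cone}(\eta_A)\in\operatorname{Perf}(Y_1)$ is thick. The hypothesis puts $G_1$ in $\mathcal{T}_1$, so $\mathcal{T}_1\supseteq\langle G_1\rangle=D^b_{\operatorname{coh}}(Y_1)$. Hence $\operatorname{cone}(\eta_A)$ is perfect for all $A$, and \Cref{lem:fully_faithfull_left_adjoint_dsg} gives full faithfulness.

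The statement for $\Phi^{\operatorname{sg}}_{E_R}$ is dual. I would use \Cref{lem:fully_faithfull_right_adjoint_dsg} together with the thickness of $\mathcal{S}_1=\{B : \operatorname{cone}(\epsilon_B)\in\operatorname{Perf}(Y_2)\}$ and the classical generator $G_2$.

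The only point needing care is that the unit $\eta$ of the adjunction on $D_{\operatorname{qc}}$ restricts to the unit of the restricted adjunction on $D^b_{\operatorname{coh}}$. This should hold because the restriction is to full subcategories preserved by both functors, so the hom-set bijection, and hence the unit, is the same.
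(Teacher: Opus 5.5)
Your proposal is correct and follows the paper's proof essentially verbatim. The forward direction comes from \Cref{lem:fully_faithfull_left_adjoint_dsg} applied to $G_1$, the converse from the thickness of $\mathcal{T}_1$ in \Cref{lem:restrict_adjunction_to_Verdier_quotient} together with $\langle G_1\rangle = D^b_{\operatorname{coh}}(Y_1)$, and the $E_R$ case is dual. Your extra check that the restricted adjunction on $D^b_{\operatorname{coh}}$ has the same unit is left implicit in the paper, and it holds for the reason you give.
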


\begin{proof}
    We prove the case for full faithfulness of $\Phi^{\operatorname{sg}}_E$. 
    The other can be argued analogously. 
    By \Cref{lem:fully_faithfull_left_adjoint_dsg}, if $\Phi^{\operatorname{sg}}_E$ is fully faithful, then $\operatorname{cone}(\eta_{G_1})\in \operatorname{Perf}(Y_1)$.
    Conversely, let $\mathcal{T}_1$ be the strictly full subcategory of objects $A\in D^b_{\operatorname{coh}}(Y_1)$ such that $\operatorname{cone}(\eta_A)\in \operatorname{Perf}(Y_1)$.
    By \Cref{lem:restrict_adjunction_to_Verdier_quotient}, $\mathcal{T}_1$ is thick. 
    The hypothesis implies $G_1 \in \mathcal{T}_1$, and so $\mathcal{T}_1 = D^b_{\operatorname{coh}}(Y_1)$.
    Applying \Cref{lem:fully_faithfull_left_adjoint_dsg}, the desired claim follows.
\end{proof}

\section{Germs}
\label{sec:germs}

\begin{definition}
    \label{def:germ}
    Let $X$ be an algebraic space.
    Pick $p\in |X|$. 
    An \textbf{\'{e}tale germ} of $(X,p)$ is an \'{e}tale morphism $s\colon U \to X$ from a scheme and a choice of point $u$ in $|s|^{-1}(p)$.
    At times, it is denoted by $s\colon (U,u) \to (X,p)$. 
\end{definition}

\begin{definition}
    \label{def:singular_FM_locus}
    Assume \Cref{setup:relatively_perfect_adjoint_pair}.
    Write $\operatorname{fm}_{\operatorname{sg}}(E)$ (resp.\ $\operatorname{FM}_{\operatorname{sg}}(E)$) for the collection of $p\in |S|$ such that there exists an \'{e}tale germ $s\colon (U,u)\to (S,p)$ such that $\Phi_{\mathbf{L}(s^\prime)^\ast E}^{\operatorname{sg}}$ is fully faithful (resp.\ an equivalence); here
    \begin{displaymath}
        s^\prime \colon Y_1\times_S Y_2 \times_S \operatorname{Spec}(\mathcal{O}_{U,u}) \to Y_1\times_S Y_2
    \end{displaymath}
    is the canonical projection induced by the composite
    \begin{displaymath}
        \operatorname{Spec}(\mathcal{O}_{U,u}) \to U \xrightarrow{s} S
    \end{displaymath}
    of the canonical morphism and $s$.
    In particular, we say that $\Phi_E^{\operatorname{sg}}$ is \textbf{fully faithful} (resp.\ an \textbf{equivalence}) \textbf{at $p\in |S|$} if $p\in \operatorname{fm}_{\operatorname{sg}}(E)$ (resp.\ $p\in \operatorname{FM}_{\operatorname{sg}}(E)$).
\end{definition}

\begin{remark}
    Assume \Cref{setup:relatively_perfect_adjoint_pair}.
    Let $s\colon (U,u) \to (S,p)$ be an \'{e}tale germ.
    Since $U$ is quasi-excellent, so is $\mathcal{O}_{U,u}$ \cite[\href{https://stacks.math.columbia.edu/tag/07QU}{Tag 07QU}]{StacksProject}. 
    By \cite[Corollary 4.11]{GuisadoVillaalgordo/Lank:2026}, $E_{s\circ t}$ is relatively perfect over each $Y_i \times_S \operatorname{Spec}(\mathcal{O}_{U,u})$ where $t\colon \operatorname{Spec}(\mathcal{O}_{U,u}) \to U$ is the canonical morphism.
    A similar statement holds for $E_R$ by loc.\ cit.
    By \cite[Lemma 2.18]{GuisadoVillaalgordo/Lank:2026}, $s \circ t$ is quasi-affine.
    Moreover, \cite[Proposition 4.19]{GuisadoVillaalgordo/Lank:2026} says that $\Phi_{(E_R)_{s \circ t}} \cong \Phi_{(E_{s\circ t})_R}$.
\end{remark}

\begin{remark}
    \label{rmk:kollar_curve}
    A field-valued analog of \Cref{def:singular_FM_locus} is not useful.
    Indeed, equivalences on singularity categories need not base change.
    See e.g.\ \cite[Example 5.31]{GuisadoVillaalgordo/Lank:2026}.
    Loc.\ cit.\ gives a projective scheme $X$ over a field $k$ which is regular but its base change $X_L$ along a suitable field extension $L/k$ is singular.
    Hence, $D_{\operatorname{sg}}(X)$ is the trivial category, whereas $D_{\operatorname{sg}}(X_L)$ is nontrivial.
\end{remark}

\begin{lemma}
    \label{lem:quasi-perfect_morphism}
    A proper morphism $f\colon Y \to X$ of finite tor-dimension between Noetherian algebraic spaces is quasi-perfect.
\end{lemma}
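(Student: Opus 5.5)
We use the definition of quasi-perfectness from \cite{Lipman/Hashimoto:2009}, adapted to algebraic spaces as in \cite[Appendix C]{GuisadoVillaalgordo/Lank:2026}. A morphism $f\colon Y\to X$ of quasi-compact quasi-separated algebraic spaces is quasi-perfect if $\mathbf{R}f_\ast$ sends perfect complexes to perfect complexes. Equivalently, its right adjoint $f^\times$ on $D_{\operatorname{qc}}$ preserves small coproducts; this holds because $D_{\operatorname{qc}}(Y)$ is compactly generated by $\operatorname{Perf}(Y)$, as recalled in \Cref{sec:prelim_perfects}. So it suffices to show that $\mathbf{R}f_\ast(\operatorname{Perf}(Y))\subseteq \operatorname{Perf}(X)$.

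Fix $P\in \operatorname{Perf}(Y)$. Since $f$ is proper and $Y,X$ are Noetherian, coherence of higher direct images for algebraic spaces \cite[\href{https://stacks.math.columbia.edu/tag/08GL}{Tag 08GL}]{StacksProject} (the $D^b_{\operatorname{coh}}$-version, using that $\mathbf{R}f_\ast$ has finite cohomological dimension for a qcqs morphism) gives $\mathbf{R}f_\ast P\in D^b_{\operatorname{coh}}(X)$. Since $X$ is Noetherian, $\mathbf{R}f_\ast P$ is then pseudo-coherent. A pseudo-coherent complex is perfect precisely when it has finite tor-amplitude \cite[\href{https://stacks.math.columbia.edu/tag/08G4}{Tag 08G4}]{StacksProject}. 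It therefore remains to bound the tor-amplitude of $\mathbf{R}f_\ast P$.

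For the tor-amplitude bound we use the projection formula. For $M\in \operatorname{Qcoh}(X)$ we have
\begin{displaymath}
    \mathbf{R}f_\ast P\otimes^{\mathbf{L}} M \cong \mathbf{R}f_\ast(P\otimes^{\mathbf{L}} \mathbf{L}f^\ast M).
\end{displaymath}
Since $f$ has finite tor-dimension, say $d$, the complex $\mathbf{L}f^\ast M$ lies in degrees $[-d,0]$. As $P$ is perfect with tor-amplitude in some interval $[a,b]$ (using quasi-compactness of $Y$), the complex $P\otimes^{\mathbf{L}}\mathbf{L}f^\ast M$ lies in degrees $[a-d,b]$. Finally, $\mathbf{R}f_\ast$ has bounded cohomological dimension $c$ on quasi-coherent complexes, by \cite[\href{https://stacks.math.columbia.edu/tag/073G}{Tag 073G}]{StacksProject}. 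Hence $\mathbf{R}f_\ast P\otimes^{\mathbf{L}} M$ lies in degrees $[a-d,b+c]$ uniformly in $M$. It suffices to test tor-amplitude against quasi-coherent $M$, and this can be checked étale locally on an affine presentation of $X$ \cite[\href{https://stacks.math.columbia.edu/tag/08CG}{Tag 08CG}]{StacksProject}. So $\mathbf{R}f_\ast P$ has finite tor-amplitude, hence is perfect.

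The main obstacle is bookkeeping rather than substance. One must make sure that the projection formula and the uniform cohomological dimension bound are available for qcqs morphisms of algebraic spaces; both are in the Stacks Project (Tags 08EU and 073G). One must also check that the chosen definition of quasi-perfect matches the one used later in the paper. If instead the definition is that $f^\times$ commutes with coproducts, or that $f^\times(\mathcal{O}_X)\otimes \mathbf{L}f^\ast(-)\cong f^\times(-)$, we will cite the equivalence of these conditions from \cite[Appendix C]{GuisadoVillaalgordo/Lank:2026} or \cite{Neeman:2023}.
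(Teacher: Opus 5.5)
Your argument is correct, but it takes a different route from the paper's. The paper's proof is a two-line reduction to the relative-perfectness machinery of \cite{GuisadoVillaalgordo/Lank:2026}:
\begin{itemize}
\item finite tor-dimension says exactly that $\mathcal{O}_Y$ is $f$-perfect;
\item \cite[Theorem 1.8]{GuisadoVillaalgordo/Lank:2026} upgrades this, for proper $f$, to $f$-quasi-perfectness, giving $\mathbf{R}f_\ast\operatorname{Perf}(Y)\subseteq\operatorname{Perf}(X)$;
\item \cite[Lemma 3.1]{DeDeyn/Lank/ManaliRahul:2025b} translates this into quasi-perfectness.
\end{itemize}

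You instead prove the middle step directly, by the classical SGA~6/Lipman argument. Properness gives $\mathbf{R}f_\ast P\in D^b_{\operatorname{coh}}(X)$, hence pseudo-coherence. The projection formula, the tor-dimension $d$ of $f$, and the uniform cohomological dimension $c$ of $\mathbf{R}f_\ast$ then bound the tor-amplitude by $[a-d,b+c]$. Your final step is Neeman's criterion: $\mathbf{R}f_\ast$ preserves compacts if and only if $f^\times$ preserves coproducts, given that $\operatorname{Perf}$ equals the compacts and $D_{\operatorname{qc}}(Y)$ is compactly generated. This is essentially what the cited Lemma 3.1 packages.

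Your route is self-contained modulo standard Stacks Project facts and yields an explicit amplitude bound. The paper's route is shorter and stays inside the relatively-perfect framework used throughout; the same theorem handles any $f$-perfect kernel, not just $\mathcal{O}_Y$.

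One bookkeeping point. Tor-amplitude on $X_{\textrm{\'{e}tale}}$ is tested against all $\mathcal{O}_X$-modules. Checking \'{e}tale locally on an affine $U=\operatorname{Spec}(A)\to X$ means testing against $A$-modules, which need not come from quasi-coherent modules on $X$. So your sentence combining ``quasi-coherent $M$ on $X$'' with ``\'{e}tale locally'' does not quite match up. There are two clean fixes:
\begin{itemize}
\item First base change along $U\to X$. Flat base change identifies $(\mathbf{R}f_\ast P)|_U$ with the pushforward of $P|_{Y\times_X U}$. Both $d$ and $c$ survive this base change, the latter by the uniformity in Tag 073G. Then run your estimate against $A$-modules.
\item Alternatively, cite directly the fact that for objects of $D_{\operatorname{qc}}(X)$, tor-amplitude may be tested on quasi-coherent modules.
\end{itemize}
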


\begin{proof}
    As $f$ has finite tor-dimension, $\mathcal{O}_Y$ is $f$-perfect.
    By \cite[Theorem 1.8]{GuisadoVillaalgordo/Lank:2026}, $\mathcal{O}_Y$ is $f$-quasi-perfect.
    Hence, $\mathbf{R}f_\ast \operatorname{Perf}(Y)\subseteq \operatorname{Perf}(X)$.
    By \cite[Lemma 3.1]{DeDeyn/Lank/ManaliRahul:2025b}, $f$ is quasi-perfect.
\end{proof}

\begin{lemma}
    \label{lem:singular_FM_locus_independence}
    Assume \Cref{setup:relatively_perfect_adjoint_pair}.
    For any $p\in |S|$, the following are equivalent, both for full faithfulness and for equivalence:
    \begin{enumerate}
        \item \label{lem:singular_FM_locus_independence1} For every \'{e}tale germ $s\colon (U,u)\to (S,p)$, the base-changed functor $\Phi_{E_{s\circ l_u}}^{\operatorname{sg}}$ is fully faithful (resp.\ an equivalence), where $l_u\colon \operatorname{Spec}(\mathcal{O}_{U,u})\to U$ is the canonical morphism.
        \item \label{lem:singular_FM_locus_independence2} There is some \'{e}tale germ $s\colon (U,u)\to (S,p)$ for which $\Phi_{E_{s\circ l_u}}^{\operatorname{sg}}$ is fully faithful (resp.\ an equivalence).
    \end{enumerate}
\end{lemma}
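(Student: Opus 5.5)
The implication (1)$\Rightarrow$(2) is immediate, since étale germs of $(S,p)$ exist: take any étale presentation of $S$ and a point over $p$. For (2)$\Rightarrow$(1), let $s\colon (U,u)\to(S,p)$ be the given germ and $s'\colon (U',u')\to(S,p)$ an arbitrary one. I will compare both germs with a common refinement. Form $W := U\times_S U'$. It is an algebraic space which is étale over both $U$ and $U'$, and it is a scheme if $S$ has quasi-affine (or at least representable) diagonal. In general, replace $W$ by an étale presentation of it; this is harmless, because $|W|\to |U|\times_{|S|}|U'|$ is surjective and so there is a point $w$ of the presentation mapping to $u$ and $u'$. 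Then $(W,w)\to (U,u)$ and $(W,w)\to(U',u')$ are étale germs. The problem therefore reduces to the following key claim: if $g\colon (W,w)\to (U,u)$ is an étale morphism of schemes with $g(w)=u$, then the base change of $\Phi^{\operatorname{sg}}_E$ to $\operatorname{Spec}\mathcal{O}_{U,u}$ is fully faithful (resp.\ an equivalence) if and only if the base change to $\operatorname{Spec}\mathcal{O}_{W,w}$ is.

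To prove the key claim, set $A=\mathcal{O}_{U,u}$ and $B=\mathcal{O}_{W,w}$. The map $A\to B$ is a flat local homomorphism, hence faithfully flat. It is also essentially étale: it is unramified with $\mathfrak{m}_AB=\mathfrak{m}_B$ and $B/\mathfrak{m}_B$ finite separable over $A/\mathfrak{m}_A$. Denote by $\eta^A,\eta^B$ the units of the base-changed adjunctions. By \Cref{lem:fully_faithful_iff_cone_of_classical_generator_is_perfect}, full faithfulness over $A$ is equivalent to $\operatorname{cone}(\eta^A_{G})\in\operatorname{Perf}$ for a classical generator $G$. Since the base changes are quasi-excellent, such generators exist by \Cref{cor:classical_generator_for_J2}. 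I can take $G$ to be the pullback of $G_1$: by \Cref{lem:counit_isomorphism_for_localization} (applied over an affine étale chart) pullback to the local ring is essentially surjective on $D^b_{\operatorname{coh}}$, so the pullback of $G_1$ is a classical generator. By flat (hence tor-independent) base change, the formation of $\Phi_E$, of $\Phi_{E_R}$ (using the remark that $\Phi_{(E_R)_t}\cong\Phi_{(E_t)_R}$), and of the unit commutes with pullback along $h\colon Y_1\times_S\operatorname{Spec}B\to Y_1\times_S\operatorname{Spec}A$. This is the compatibility of units with base change, cf.\ \cite[Remark 4.29]{GuisadoVillaalgordo/Lank:2026}. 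Hence $\operatorname{cone}(\eta^B_{h^\ast G})\cong h^\ast\operatorname{cone}(\eta^A_G)$, and it remains to show that an object $C\in D^b_{\operatorname{coh}}$ is perfect if and only if $h^\ast C$ is.

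One direction of this last point is clear. For the converse, use \Cref{lem:perfect_over_local_ring_iff_perfect_over_closed_fiber}: perfectness over $\operatorname{Spec}A$ is detected on the closed fibre over $k_A$, and likewise over $B$. The closed fibre over $B$ is the base change of the closed fibre over $A$ along the finite separable extension $k_B/k_A$, which is faithfully flat. Perfectness of a bounded coherent complex descends along faithfully flat maps; this can be checked at points via \cite[Proposition 3.12]{GuisadoVillaalgordo/Lank:2026}, since $\mathbf{L}t_q^\ast$ having bounded cohomology is insensitive to field extension. The equivalence case follows by also treating the counit at $G_2$ in the same way, since an adjunction whose unit and counit are both isomorphisms is an equivalence.

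The main obstacle is justifying that units and counits commute with the flat base change $h$ when $h$ is not quasi-affine over the original $Y_i$. My first attempt would be to use that the composite $\operatorname{Spec}B\to S$ is quasi-affine and to apply \cite[Remark 4.29]{GuisadoVillaalgordo/Lank:2026} to both composites, comparing each with the pullback from $S$ itself.
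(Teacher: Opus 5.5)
Your proof is correct, and its core is the paper's. You pull back $G_1$ from $Y_1$, use that $\operatorname{Spec}(\mathcal{O}_{U,u})\to S$ is quasi-affine together with \cite[Remark 4.29]{GuisadoVillaalgordo/Lank:2026} to identify the cone of the base-changed unit with the pullback of $\operatorname{cone}(\eta_{G_1})$, and then transport perfectness. The fix you suggest for your ``main obstacle'' is exactly what the paper does, and it works. Because $\operatorname{Spec}(B)\to S$ factors through $\operatorname{Spec}(A)$, pseudofunctoriality gives $\operatorname{cone}(\eta^B_{h^\ast G})\cong h^\ast\operatorname{cone}(\eta^A_G)$, so you never need to base change the adjunction along $h$ itself.

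The difference lies in how the two germs are compared. You build a common \'{e}tale refinement $(W,w)$ and prove invariance under a single \'{e}tale map of pointed schemes, using the easy direction along $W\to U$ and descent along $W\to U'$. The paper never forms $U\times_S V$. It picks a field $k$ mapping compatibly to $\kappa(u)$ and $\kappa(v)$ over $S$, and observes that both closed fibres become $Y_i\times_S\operatorname{Spec}(k)$ after base change. It then descends perfectness along $Y_{1,k,v}\to Y_{1,\kappa(v)}$ and returns to $Y_{1,v}$ via \Cref{lem:perfect_over_local_ring_iff_perfect_over_closed_fiber}. The paper's route uses only points of $|S|$, but it must pass through closed fibres because $Y_{1,u}$ and $Y_{1,v}$ are not related by any morphism.

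Your route produces such a morphism. $h$ is the base change of the faithfully flat map of affine schemes $\operatorname{Spec}(B)\to\operatorname{Spec}(A)$, hence affine and faithfully flat. So \cite[Lemma 3.7]{GuisadoVillaalgordo/Lank:2026} descends perfectness along $h$ directly, and the closed-fibre step can be dropped. This is just as well, since as written that step is slightly loose. The restriction of $C$ to the closed fibre is only pseudocoherent, not bounded, so a descent statement for bounded coherent complexes does not literally apply there.

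Two small points. First, $U\times_S U'$ is always a scheme, since the diagonal of an algebraic space is representable, so the extra presentation is unnecessary. Second, to see that the pullback of $G_1$ is a classical generator you also need essential density of the \'{e}tale pullback $D^b_{\operatorname{coh}}(Y_1)\to D^b_{\operatorname{coh}}(Y_1\times_S U)$ from \cite[Proposition 1.4]{Lank:2026c}, not only \Cref{lem:counit_isomorphism_for_localization}.
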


\begin{proof}
    We prove the case for full faithfulness.
    The case for equivalence is similar. 
    It is obvious that $\eqref{lem:singular_FM_locus_independence1} \implies \eqref{lem:singular_FM_locus_independence2}$. 
    We prove the converse. 

    Suppose $\Phi_{E_{s\circ l_u}}^{\operatorname{sg}}$ is fully faithful for some \'{e}tale germ $s\colon (U,u)\to (S,p)$.
    After choosing an affine open neighborhood of $u$ in $U$, we can impose that $U$ is affine. 
    Fix an \'{e}tale germ $t\colon (V,v)\to (S,p)$. 
    Again, after choosing an affine open neighborhood of $v$ in $V$, we can impose that $V$ is affine. 
    For each $i$, there exist diagrams with fibered faces:
    \begin{displaymath}
        \begin{tikzcd}
            {Y_{i,\kappa(u)}} & {Y_{i,u}} & {Y_{i,U}} & {Y_i} \\
            {\operatorname{Spec}(\kappa(u))} & {\operatorname{Spec}(\mathcal{O}_{U,u})} & U & S
            \arrow["{u_3^i}", from=1-1, to=1-2]
            \arrow["{f^{\kappa(u)}_i}", from=1-1, to=2-1]
            \arrow["{u_2^i}", from=1-2, to=1-3]
            \arrow["{f^u_i}", from=1-2, to=2-2]
            \arrow["{u_1^i}", from=1-3, to=1-4]
            \arrow["{f^U_i}", from=1-3, to=2-3]
            \arrow["{f_i}", from=1-4, to=2-4]
            \arrow["{r^U_u}"', from=2-1, to=2-2]
            \arrow["{l^U_u}"', from=2-2, to=2-3]
            \arrow["u"', from=2-3, to=2-4]
        \end{tikzcd}
    \end{displaymath}
    and 
    \begin{displaymath}
        \begin{tikzcd}
            {Y_{i,\kappa(v)}} & {Y_{i,v}} & {Y_{i,V}} & {Y_i} \\
            {\operatorname{Spec}(\kappa(v))} & {\operatorname{Spec}(\mathcal{O}_{V,v})} & V & {S.}
            \arrow["{v_3^i}", from=1-1, to=1-2]
            \arrow["{f^{\kappa(v)}_i}", from=1-1, to=2-1]
            \arrow["{v_2^i}", from=1-2, to=1-3]
            \arrow["{f^v_i}", from=1-2, to=2-2]
            \arrow["{v_1^i}", from=1-3, to=1-4]
            \arrow["{f^V_i}", from=1-3, to=2-3]
            \arrow["{f_i}", from=1-4, to=2-4]
            \arrow["{r^V_v}"', from=2-1, to=2-2]
            \arrow["{l^V_v}"', from=2-2, to=2-3]
            \arrow["v"', from=2-3, to=2-4]
        \end{tikzcd}
    \end{displaymath}
    Here $r^\#_{?}$ and $l^\#_{?}$ are the canonical morphisms where $\# \in \{U,V\}$ and $?\in \{u,v\}$. 
    Base change implies each $f^\#_i$, $f^{?}_i$, and $f^{\kappa(?)}_i$ are proper and flat where $i\in \{1,2\}$, $\# \in \{U,V\}$, and $?\in \{u,v\}$. 
    Since $s$ and $t$ are \'{e}tale germs of $p\in |S|$, there exist a field $k$ and a commutative diagram (see \cite[\href{https://stacks.math.columbia.edu/tag/03BT}{Tag 03BT}]{StacksProject} and \cite[Lemma 2.7]{GuisadoVillaalgordo/Lank:2026})
    \begin{displaymath}
        \begin{tikzcd}
            {\operatorname{Spec}(k)} & {\operatorname{Spec}(\kappa(u))} \\
            {\operatorname{Spec}(\kappa(v))} & {S.}
            \arrow["{a_u}", from=1-1, to=1-2]
            \arrow["{a_v}"', from=1-1, to=2-1]
            \arrow["{u\circ l^U_u \circ r^U_u}", from=1-2, to=2-2]
            \arrow["{v\circ l^V_v \circ r^V_v}"', from=2-1, to=2-2]
        \end{tikzcd}
    \end{displaymath}
    Combining these diagrams, we obtain a commutative diagram
    \begin{equation}
        \label{eq:cookie}
        \begin{tikzcd}
            & {Y_{i,k,u}} & {Y_{i,\kappa(u)}} \\
            {Y_{i,k,v}} & {\operatorname{Spec}(k)} & {\operatorname{Spec}(\kappa(u))} \\
            {Y_{i,\kappa(v)}} & {\operatorname{Spec}(\kappa(v))}
            \arrow["{a^i_u}", from=1-2, to=1-3]
            \arrow["{g_u^i}"', from=1-2, to=2-2]
            \arrow["{f^{\kappa(u)}_i}"', from=1-3, to=2-3]
            \arrow["{g_v^i}", from=2-1, to=2-2]
            \arrow["{a^i_v}"', from=2-1, to=3-1]
            \arrow["{a_u}", from=2-2, to=2-3]
            \arrow["{a_v}"', from=2-2, to=3-2]
            \arrow["{f^{\kappa(v)}_i}", from=3-1, to=3-2]
        \end{tikzcd}
    \end{equation}
    with fibered faces.
    Note that 
    \begin{displaymath}
        v \circ l^V_v \circ r^V_v \circ a_v = u \circ l^U_u \circ r^U_u \circ a_u.
    \end{displaymath} 
    It follows that the rectangles 
    \begin{displaymath}
        \begin{tikzcd}
            & {Y_{i,k,u}} & {Y_{i,\kappa(u)}} & {Y_{i,u}} & {Y_{i,U}} & {Y_i} \\
            {Y_{i,k,v}} & {\operatorname{Spec}(k)} & {\operatorname{Spec}(\kappa(u))} & {\operatorname{Spec}(\mathcal{O}_{U,u})} & U & S \\
            {Y_{i,\kappa(v)}} & {\operatorname{Spec}(\kappa(v))} \\
            {Y_{i,v}} & {\operatorname{Spec}(\mathcal{O}_{V,v})} \\
            {Y_{i,V}} & V \\
            {Y_i} & S
            \arrow["{a^i_u}", from=1-2, to=1-3]
            \arrow["{g^i_u}"', from=1-2, to=2-2]
            \arrow["{u^i_3}", from=1-3, to=1-4]
            \arrow["{f^{\kappa(u)}_i}", from=1-3, to=2-3]
            \arrow["{u^i_2}", from=1-4, to=1-5]
            \arrow["{f^u_i}", from=1-4, to=2-4]
            \arrow["{u^i_1}", from=1-5, to=1-6]
            \arrow["{f^U_i}", from=1-5, to=2-5]
            \arrow["{f_i}", from=1-6, to=2-6]
            \arrow["{g^i_v}", from=2-1, to=2-2]
            \arrow["{a^i_v}"', from=2-1, to=3-1]
            \arrow["{a_u}"', from=2-2, to=2-3]
            \arrow["{a_v}", from=2-2, to=3-2]
            \arrow["{r^U_u}"', from=2-3, to=2-4]
            \arrow["{l^U_u}"', from=2-4, to=2-5]
            \arrow["u"', from=2-5, to=2-6]
            \arrow["{f^{\kappa(v)}_i}"', from=3-1, to=3-2]
            \arrow["{v^i_3}"', from=3-1, to=4-1]
            \arrow["{r^V_v}", from=3-2, to=4-2]
            \arrow["{f^v_i}"', from=4-1, to=4-2]
            \arrow["{v^i_2}"', from=4-1, to=5-1]
            \arrow["{l^V_v}", from=4-2, to=5-2]
            \arrow["{f^V_i}"', from=5-1, to=5-2]
            \arrow["{v^i_1}"', from=5-1, to=6-1]
            \arrow["v", from=5-2, to=6-2]
            \arrow["{f_i}"', from=6-1, to=6-2]
        \end{tikzcd}
    \end{displaymath}
    both form the base change of $f_i$ along the composition $\operatorname{Spec}(k) \to S$.
    This gives an isomorphism $h \colon Y_{i,k,v} \to Y_{i,k,u}$ compatible with the diagrams above.
    Indeed, both rectangles above are pastings of fibered squares, which are fiber products of $f_i$ and the common map $\operatorname{Spec}(k) \to S$.
    That is, we have a commutative diagram
    \begin{displaymath}
        \begin{tikzcd}
            {Y_{i,k,v}} &&& {Y_i} && \\
            & {\operatorname{Spec}(k)} &&&& S \\
            {Y_{i,k,u}} &&& {Y_i} \\
            & {\operatorname{Spec}(k)} &&&& {S.}
            \arrow["{v^i_1 \circ v^i_2 \circ v^i_3 \circ a^i_v}", from=1-1, to=1-4]
            \arrow[equals, from=2-2, to=4-2]
            \arrow["{g^i_v}", from=1-1, to=2-2]
            \arrow["h"', from=1-1, to=3-1]
            \arrow["{f_i}", from=1-4, to=2-6]
            \arrow[equals, dashed, from=1-4, to=3-4]
            \arrow["{v \circ l^V_v \circ r^V_v \circ a_v}"{pos=0.3}, from=2-2, to=2-6]
            \arrow[equals, from=2-6, to=4-6]
            \arrow["{u^i_1 \circ u^i_2 \circ u^i_3 \circ a^i_u}"{pos=0.6}, dashed, from=3-1, to=3-4]
            \arrow["{g^i_u}", from=3-1, to=4-2]
            \arrow["{f_i}", from=3-4, to=4-6]
            \arrow["{u \circ l^U_u \circ r^U_u \circ a_u}"', from=4-2, to=4-6]
        \end{tikzcd}
    \end{displaymath}   
    In particular, it follows that $v^i_1 \circ v^i_2 \circ v^i_3 \circ a^i_v = u^i_1 \circ u^i_2 \circ u^i_3 \circ a^i_u \circ h$. 

    By \cite[Proposition 1.4]{Lank:2026c} and \Cref{lem:counit_isomorphism_for_localization}, 
    \begin{displaymath}
        \mathbf{L}(?_1^1 \circ ?_2^1)^\ast \colon D^b_{\operatorname{coh}}(Y_1) \to D^b_{\operatorname{coh}}(Y_{1,?})
    \end{displaymath}
    is essentially dense for $?\in \{u,v\}$. 
    Hence, $\mathbf{L}(?_1^1\circ ?_2^1)^\ast G_1$ is a classical generator for $D^b_{\operatorname{coh}}(Y_{1,?})$ for $?\in \{u,v\}$. 
    Denote by $\eta^{1,?}$ the unit for the adjoint pair $\Phi_{E_{?_1^1 \circ ?_2^1}} \dashv \Phi_{(E_R)_{?_1^1 \circ ?_2^1}}$ for $?\in \{u,v\}$.
    By \Cref{lem:fully_faithful_iff_cone_of_classical_generator_is_perfect}, it suffices to prove that 
    \begin{displaymath}
        \operatorname{cone}(\eta^{1,v}_{\mathbf{L}(v_1^1 \circ v_2^1)^\ast G_1})\in \operatorname{Perf}(Y_{1,v}).
    \end{displaymath}
    Applying \cite[Lemma 2.18]{GuisadoVillaalgordo/Lank:2026}, $w \circ l^W_w$ is quasi-affine for each $w\in \{u,v\}$ and $W\in \{U,V\}$.
    Base change implies that $w_1^i \circ w_2^i$ is quasi-affine for $w\in \{u,v\}$.
    By \cite[Remark 4.29]{GuisadoVillaalgordo/Lank:2026}, there exist diagrams for $w\in \{u,v\}$:
    \begin{displaymath}
        \begin{tikzcd}[column sep=large,row sep=large]
        \mathbf{L}(w_1^1\circ w_2^1)^\ast G_1
        \arrow[r,"\eta^{1,w}_{\mathbf{L}(w_1^1\circ w_2^1)^\ast G_1}"]
        \arrow[d,"\mathbf{L}(w_1^1\circ w_2^1)^\ast (\eta_{G_1})"']
        &
        \Phi_{(E_R)_{w_1^1\circ w_2^1}}
        \Phi_{E_{w_1^1\circ w_2^1}}
        (\mathbf{L}(w_1^1\circ w_2^1)^\ast G_1)
        \arrow[d,"\cong"]
        \\
        \mathbf{L}(w_1^1\circ w_2^1)^\ast \Phi_{E_R}\Phi_E(G_1)
        \arrow[r,"\cong"']
        &
        \Phi_{(E_R)_{w_1^1\circ w_2^1}}
        \mathbf{L}(w_1^2\circ w_2^2)^\ast \Phi_E(G_1).
        \end{tikzcd}
    \end{displaymath}
    Applying the octahedral axiom to the diagrams above, it follows that there exist isomorphisms for $w\in \{u,v\}$:
    \begin{equation}
        \label{eq:12}
        \mathbf{L}(w_1^1\circ w_2^1)^\ast \operatorname{cone}(\eta_{G_1})
        \cong \operatorname{cone}(\eta^{1,w}_{\mathbf{L}(w_1^1\circ w_2^1 )^\ast G_1}).
    \end{equation}

    Now we finish the proof. 
    The hypothesis says that $\operatorname{cone}(\eta^{1,u}_{\mathbf{L}(u_1^1\circ u_2^1)^\ast G_1}) \in \operatorname{Perf}(Y_{1,u})$. 
    By \eqref{eq:12},
    \begin{displaymath}
        \mathbf{L}(u_1^1\circ u_2^1)^\ast \operatorname{cone}(\eta_{G_1})
        \cong \operatorname{cone}(\eta^{1,u}_{\mathbf{L}(u_1^1\circ u_2^1 )^\ast G_1}).
    \end{displaymath}
    By \cite[\href{https://stacks.math.columbia.edu/tag/08H6}{Tag 08H6}]{StacksProject},
    \begin{displaymath}
        \begin{aligned}
            \mathbf{L}(u_1^1\circ u_2^1 \circ u_3^1 \circ a_u^1)^\ast 
            & \operatorname{cone}(\eta_{G_1})
            \\&\cong \mathbf{L}(u_3^1 \circ a_u^1)^\ast \mathbf{L} (u_1^1\circ u_2^1)^\ast \operatorname{cone}(\eta_{G_1}) && (\textrm{pseudofunctoriality})
            \\&\in \operatorname{Perf}(Y_{1,k,u}).
        \end{aligned}
    \end{displaymath}
    However,
    \begin{displaymath}
        v^i_1 \circ v^i_2 \circ v^i_3 \circ a^i_v = u^i_1 \circ u^i_2 \circ u^i_3 \circ a^i_u \circ h,
    \end{displaymath}
    and so it follows that 
    \begin{displaymath}
        \mathbf{L}(u_1^1\circ u_2^1 \circ u_3^1 \circ a_u^1 \circ h)^\ast \operatorname{cone}(\eta_{G_1})
        \cong \mathbf{L}(v_1^1\circ v_2^1 \circ v_3^1 \circ a_v^1)^\ast \operatorname{cone}(\eta_{G_1})
    \end{displaymath}
    is perfect. 
    Since $a^i_v$ is affine and faithfully flat, \cite[Lemma 3.7]{GuisadoVillaalgordo/Lank:2026} implies 
    \begin{displaymath}
        \mathbf{L}(v_1^1\circ v_2^1 \circ v_3^1)^\ast \operatorname{cone}(\eta_{G_1})
    \end{displaymath}
    is perfect. 
    Consequently, \Cref{lem:perfect_over_local_ring_iff_perfect_over_closed_fiber} yields that $\mathbf{L}(v_1^1\circ v_2^1)^\ast \operatorname{cone}(\eta_{G_1})$ is perfect.
    Applying \eqref{eq:12}, it follows that $\operatorname{cone}(\eta^{1,v}_{\mathbf{L}(v_1^1\circ v_2^1 )^\ast G_1})$ is perfect. 
    This completes the proof.
\end{proof}

\section{Openness \& base change}
\label{sec:opennes_base_change}

\begin{theorem}
    \label{thm:openness_singular_fm}
    Assume \Cref{setup:relatively_perfect_adjoint_pair}.
    Then $\operatorname{fm}_{\operatorname{sg}}(E)$ and $\operatorname{FM}_{\operatorname{sg}}(E)$ are open subsets of $|S|$. 
\end{theorem}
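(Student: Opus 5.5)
The plan is to describe the complement of $\operatorname{fm}_{\operatorname{sg}}(E)$ as the image under the closed map $|f_1|$ of a closed subset of $|Y_1|$. For the equivalence locus, we use that $\Phi_E^{\operatorname{sg}}$ is an equivalence iff both $\Phi_E^{\operatorname{sg}}$ and $\Phi_{E_R}^{\operatorname{sg}}$ are fully faithful. This characterization passes to each germ, using the remark that $\Phi_{(E_R)_{s\circ t}}\cong\Phi_{(E_{s\circ t})_R}$. Hence $\operatorname{FM}_{\operatorname{sg}}(E)=\operatorname{fm}_{\operatorname{sg}}(E)\cap \operatorname{fm}'$, where $\operatorname{fm}'$ is the analogous locus for the counit at $G_2$. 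It therefore suffices to prove the full faithfulness case; the counit case is symmetric, with $Y_2$, $f_2$, $\epsilon_{G_2}$ in place of $Y_1$, $f_1$, $\eta_{G_1}$.

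Set $C:=\operatorname{cone}(\eta_{G_1})\in D^b_{\operatorname{coh}}(Y_1)$, and let $Z\subseteq |Y_1|$ be the set of points $y$ at which $C$ is not perfect. Concretely, $y\in Z$ iff for some (equivalently every) étale germ of $(Y_1,y)$ the pullback of $C$ to the local ring is not perfect. I would invoke \cite[Lemma 3.46]{GuisadoVillaalgordo/Lank:2026} (or argue directly via an étale presentation and openness of the perfect locus of a bounded coherent complex on a Noetherian scheme) to see that $Z$ is closed. Since $f_1$ is proper, $|f_1|(Z)$ is closed. The claim is then
\[
|S|\setminus \operatorname{fm}_{\operatorname{sg}}(E)=|f_1|(Z).
\]

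Fix $p\in|S|$ and an affine étale germ $s\colon(U,u)\to(S,p)$. Write $w\colon Y_{1,u}\to Y_1$ for the base change morphism, which is quasi-affine and flat. In the proof of \Cref{lem:singular_FM_locus_independence} we established that $\mathbf{L}w^\ast G_1$ is a classical generator (via essential density, \Cref{lem:counit_isomorphism_for_localization}) and the isomorphism \eqref{eq:12}, namely $\mathbf{L}w^\ast C\cong \operatorname{cone}(\eta^{1,u}_{\mathbf{L}w^\ast G_1})$. Combined with \Cref{lem:fully_faithful_iff_cone_of_classical_generator_is_perfect}, this shows $p\in\operatorname{fm}_{\operatorname{sg}}(E)$ iff $\mathbf{L}w^\ast C$ is perfect on $Y_{1,u}$.

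It remains to show that $\mathbf{L}w^\ast C$ is perfect iff no point of $Y_1$ over $p$ lies in $Z$. Perfectness can be tested at closed points, and \Cref{lem:perfect_over_local_ring_iff_perfect_over_closed_fiber} reduces this to the closed fiber. Every closed point of $Y_{1,u}$ lies over the closed point $u$, and these points map onto the points of $|Y_1|$ over $p$. The local rings of $Y_{1,u}$ at such points are étale-germ-compatible with those of $Y_1$, because $U\to S$ is étale and $\operatorname{Spec}\mathcal{O}_{U,u}\to U$ is a localization. Hence perfectness of $\mathbf{L}w^\ast C$ at a point over $u$ is equivalent to perfectness of $C$ at its image. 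This gives both inclusions in the displayed equality.

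The main obstacle is this last pointwise comparison for algebraic spaces. One must check that the points of $Y_{1,u}$ lying over $u$ surject onto $|f_1|^{-1}(p)$, and that perfectness is preserved and detected along the flat map $w$ at these points. For detection I would use \cite[Lemma 3.7]{GuisadoVillaalgordo/Lank:2026}, applied after passing to an étale presentation of $Y_1$, where $w$ becomes a localization of affine schemes. The closedness of $Z$ is the other point needing care, and I would cite Lemma 3.46 of loc.\ cit.\ for it.
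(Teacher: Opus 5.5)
Your proposal is correct and follows essentially the paper's route: you identify $|S|\setminus\operatorname{fm}_{\operatorname{sg}}(E)$ with the $|f_1|$-image of the closed locus where $\operatorname{cone}(\eta_{G_1})$ fails to be perfect, using the base-change isomorphism of this cone to the germ together with \Cref{lem:fully_faithful_iff_cone_of_classical_generator_is_perfect}, and you handle $\operatorname{FM}_{\operatorname{sg}}(E)$ by adjoining the analogous $|f_2|$-image for $\epsilon_{G_2}$. The only difference is cosmetic: the paper phrases the pointwise condition as boundedness of derived fibers at field-valued points (via \cite[Proposition 3.12, Lemmas 3.8 \& 3.46]{GuisadoVillaalgordo/Lank:2026}), while you phrase it as perfectness of stalks and compare local rings along $w$. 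Note two small corrections: it is all points of $Y_{1,u}$ over $u$, not only the closed ones, that surject onto $|f_1|^{-1}(p)$; and detection of perfectness along $w$ comes from the isomorphism of local rings, not from faithful flatness, since a localization is not surjective.
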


\begin{proof} 
    Denote by $U_1$ the collection of $y\in |Y_1|$ such that $\mathbf{L}t^\ast \operatorname{cone}(\eta_{G_1})\in D^b_{\operatorname{coh}}(k)$ where $t$ is a representative of $y$. 
    Moreover, let $U_2$ be the collection of $y\in |Y_2|$ such that $\mathbf{L}t^\ast \operatorname{cone}(\epsilon_{G_2})\in D^b_{\operatorname{coh}}(k)$ where $t$ is a representative of $y$. 
    By \cite[Lemma 3.46]{GuisadoVillaalgordo/Lank:2026}, $U_1$ and $U_2$ are open. 

    We first claim that 
    \begin{displaymath}
        |S| \setminus \operatorname{fm}_{\operatorname{sg}}(E) = |f_1|(|Y_1| \setminus U_1).
    \end{displaymath}
    Since $|f_1|$ is closed, the desired statement for full faithfulness would follow. 
    Choose $p\in |S|\setminus \operatorname{fm}_{\operatorname{sg}}(E)$.
    Pick an \'{e}tale germ $s\colon (U,u) \to (S,p)$ from an affine scheme.
    Denote by $s_u \colon \operatorname{Spec}(\mathcal{O}_{U,u}) \to U$ the canonical morphism.
    By the choice of $p$, $\Phi_{E_{s\circ s_u}}^{\operatorname{sg}}$ is not fully faithful.
    Form the fibered rectangle 
    \begin{displaymath}
        \begin{tikzcd}
            {Y_1 \times_S \operatorname{Spec}(\mathcal{O}_{U,u})} & {\operatorname{Spec}(\mathcal{O}_{U,u})} \\
            {Y_1\times_S U} & U \\
            {Y_1} & {S.}
            \arrow["{f^u_1}", from=1-1, to=1-2]
            \arrow["{s^\prime_u}"', from=1-1, to=2-1]
            \arrow["{s_u}", from=1-2, to=2-2]
            \arrow["{f_1^\prime}"', from=2-1, to=2-2]
            \arrow["{s^\prime}"', from=2-1, to=3-1]
            \arrow["s", from=2-2, to=3-2]
            \arrow["{f_1}"', from=3-1, to=3-2]
        \end{tikzcd}
    \end{displaymath}
    By \cite[Proposition 1.4]{Lank:2026c} and \Cref{lem:counit_isomorphism_for_localization}, 
    \begin{displaymath}
        \mathbf{L}(s^\prime \circ s^\prime_u)^\ast \colon D^b_{\operatorname{coh}}(Y_1) \to D^b_{\operatorname{coh}}(Y_1\times_S \operatorname{Spec}(\mathcal{O}_{U,u}))
    \end{displaymath}
    is essentially dense. 
    Hence, $\mathbf{L}(s^\prime \circ s^\prime_u)^\ast G_1$ is a classical generator for $D^b_{\operatorname{coh}}(Y_1\times_S \operatorname{Spec}(\mathcal{O}_{U,u}))$. 
    Denote by $\eta^\prime$ the unit for the adjoint pair $\Phi_{E_{s \circ s_u}} \dashv \Phi_{(E_R)_{s \circ s_u}}$. 
    By \cite[Lemma 2.18]{GuisadoVillaalgordo/Lank:2026}, $s\circ s_u$ is quasi-affine, and so base change yields that $s^\prime \circ s^\prime_u$ is quasi-affine.
    By \cite[Remark 4.29]{GuisadoVillaalgordo/Lank:2026}, there exist diagrams 
    \begin{displaymath}
        \begin{tikzcd}[column sep=large,row sep=large]
        \mathbf{L}(s^\prime \circ s^\prime_u)^\ast G_1
        \arrow[r,"\eta^\prime_{\mathbf{L}(s^\prime \circ s^\prime_u)^\ast G_1}"]
        \arrow[d,"\mathbf{L}(s^\prime \circ s^\prime_u)^\ast (\eta_{G_1})"']
        &
        \Phi_{(E_R)_{s \circ s_u}}
        \Phi_{E_{s \circ s_u}}
        (\mathbf{L}(s^\prime \circ s^\prime_u)^\ast G_1)
        \arrow[d,"\cong"]
        \\
        \mathbf{L}(s^\prime \circ s^\prime_u)^\ast \Phi_{E_R}\Phi_E(G_1)
        \arrow[r,"\cong"']
        &
        \Phi_{(E_R)_{s \circ s_u}}
        \mathbf{L}(s^\prime \circ s^\prime_u)^\ast \Phi_E(G_1).
        \end{tikzcd}
    \end{displaymath}
    Applying the octahedral axiom yields an isomorphism,
    \begin{displaymath}
        \mathbf{L}(s^\prime \circ s^\prime_u)^\ast \operatorname{cone}(\eta_{G_1})
        \cong \operatorname{cone}(\eta^\prime_{\mathbf{L}(s^\prime \circ s^\prime_u)^\ast G_1}).
    \end{displaymath}
    By \Cref{lem:fully_faithful_iff_cone_of_classical_generator_is_perfect}, the choice of $p$ implies
    \begin{displaymath}
        \operatorname{cone}(\eta^\prime_{\mathbf{L}(s^\prime \circ s^\prime_u)^\ast G_1})\not\in \operatorname{Perf}(Y_1\times_S \operatorname{Spec}(\mathcal{O}_{U,u})).
    \end{displaymath}
    Hence, $\mathbf{L}(s^\prime \circ s^\prime_u)^\ast \operatorname{cone}(\eta_{G_1})$ is not perfect. 
    Then, by \cite[Proposition 3.12]{GuisadoVillaalgordo/Lank:2026}, there exist a closed point $q\in |Y_1\times_S \operatorname{Spec}(\mathcal{O}_{U,u})|$ and a representative $t\colon \operatorname{Spec}(k)\to Y_1 \times_S \operatorname{Spec}(\mathcal{O}_{U,u})$ such that 
    \begin{displaymath}
        \mathbf{L}(s^\prime \circ s^\prime_u \circ t)^\ast \operatorname{cone}(\eta_{G_1})
    \end{displaymath}
    is not in $D^b_{\operatorname{coh}}(k)$. 
    Consequently, by \cite[Lemmas 2.7 \&  3.8]{GuisadoVillaalgordo/Lank:2026}, $|s^\prime \circ s^\prime_u|(q)\not\in U_1$. 
    As $q$ is a closed point and $f^u_1$ is proper, it follows that $|f^u_1|(q)$ is the unique closed point. 
    Thus, $s\circ s_u \circ f^u_1 \circ t$ represents $p$, and so 
    \begin{displaymath}
       |S| \setminus \operatorname{fm}_{\operatorname{sg}}(E) \subseteq |f_1|(|Y_1| \setminus U_1).
    \end{displaymath}
    We show the reverse inclusion.

    Pick $p\in |f_1|(|Y_1| \setminus U_1)$. 
    Choose $q \in |Y_1| \setminus U_1$ such that $|f_1|(q)=p$. 
    Consider an \'{e}tale presentation $s\colon U \to S$ from an affine scheme.
    We can find $u\in |U|$ such that $|s|(u)=p$.
    Denote by $s_u \colon \operatorname{Spec}(\mathcal{O}_{U,u}) \to U$ the canonical morphism. 
    Form the fibered rectangle 
    \begin{displaymath}
        \begin{tikzcd}
            {Y_1 \times_S \operatorname{Spec}(\mathcal{O}_{U,u})} & {\operatorname{Spec}(\mathcal{O}_{U,u})} \\
            {Y_1\times_S U} & U \\
            {Y_1} & {S.}
            \arrow["{f^u_1}", from=1-1, to=1-2]
            \arrow["{s^\prime_u}"', from=1-1, to=2-1]
            \arrow["{s_u}", from=1-2, to=2-2]
            \arrow["{f_1^\prime}"', from=2-1, to=2-2]
            \arrow["{s^\prime}"', from=2-1, to=3-1]
            \arrow["s", from=2-2, to=3-2]
            \arrow["{f_1}"', from=3-1, to=3-2]
        \end{tikzcd}
    \end{displaymath}
    By \cite[Proposition 1.4]{Lank:2026c} and \Cref{lem:counit_isomorphism_for_localization}, 
    \begin{displaymath}
        \mathbf{L}(s^\prime \circ s^\prime_u)^\ast \colon D^b_{\operatorname{coh}}(Y_1) \to D^b_{\operatorname{coh}}(Y_1\times_S \operatorname{Spec}(\mathcal{O}_{U,u}))
    \end{displaymath}
    is essentially dense. 
    Hence, $\mathbf{L}(s^\prime \circ s^\prime_u)^\ast G_1$ is a classical generator for $D^b_{\operatorname{coh}}(Y_1\times_S \operatorname{Spec}(\mathcal{O}_{U,u}))$. 
    Denote by $\eta^\prime$ the unit for the adjoint pair $\Phi_{E_{s \circ s_u}} \dashv \Phi_{(E_R)_{s \circ s_u}}$. 
    By \cite[Lemma 2.18]{GuisadoVillaalgordo/Lank:2026}, $s\circ s_u$ is quasi-affine, and base change implies $s^\prime \circ s^\prime_u$ is quasi-affine.
    Moreover, \cite[Remark 4.29]{GuisadoVillaalgordo/Lank:2026} yields diagrams 
    \begin{displaymath}
        \begin{tikzcd}[column sep=large,row sep=large]
        \mathbf{L}(s^\prime \circ s^\prime_u)^\ast G_1
        \arrow[r,"\eta^\prime_{\mathbf{L}(s^\prime \circ s^\prime_u)^\ast G_1}"]
        \arrow[d,"\mathbf{L}(s^\prime \circ s^\prime_u)^\ast (\eta_{G_1})"']
        &
        \Phi_{(E_R)_{s \circ s_u}}
        \Phi_{E_{s \circ s_u}}
        (\mathbf{L}(s^\prime \circ s^\prime_u)^\ast G_1)
        \arrow[d,"\cong"]
        \\
        \mathbf{L}(s^\prime \circ s^\prime_u)^\ast \Phi_{E_R}\Phi_E(G_1)
        \arrow[r,"\cong"']
        &
        \Phi_{(E_R)_{s \circ s_u}}
        \mathbf{L}(s^\prime \circ s^\prime_u)^\ast \Phi_E(G_1).
        \end{tikzcd}
    \end{displaymath}
    Applying the octahedral axiom gives an isomorphism,
    \begin{displaymath}
        \mathbf{L}(s^\prime \circ s^\prime_u)^\ast \operatorname{cone}(\eta_{G_1})
        \cong \operatorname{cone}(\eta^\prime_{\mathbf{L}(s^\prime \circ s^\prime_u)^\ast G_1}).
    \end{displaymath}
    Denote by $i\colon \operatorname{Spec}(\kappa(u))\to \operatorname{Spec}(\mathcal{O}_{U,u})$ the canonical closed immersion. 
    By construction, $s\circ s_u \circ i$ represents $p$. 
    After picking a representative $t\colon \operatorname{Spec}(k) \to Y_1$ of $q$, $f_1 \circ t$ represents $p$ \cite[Lemma 2.7]{GuisadoVillaalgordo/Lank:2026}.
    Hence, there exist a field $\ell$ and a commutative diagram
    \begin{displaymath}
        \begin{tikzcd}
            {\operatorname{Spec}(\ell)} & {\operatorname{Spec}(\kappa(u))} \\
            {\operatorname{Spec}(k)} & S
            \arrow["b", from=1-1, to=1-2]
            \arrow["a"', from=1-1, to=2-1]
            \arrow["{s\circ s_u \circ i}", from=1-2, to=2-2]
            \arrow["{f_1 \circ t}"', from=2-1, to=2-2]
        \end{tikzcd}
    \end{displaymath}
    Consequently, there exist a unique (dashed) morphism $h$ and a commutative diagram
    \begin{displaymath}
        \begin{tikzcd}
            {\operatorname{Spec}(\ell)} && \\
            & {Y_1 \times_S \operatorname{Spec}(\kappa(u))} & {\operatorname{Spec}(\kappa(u))} \\
            & {Y_1 \times_S \operatorname{Spec}(\mathcal{O}_{U,u})} & {\operatorname{Spec}(\mathcal{O}_{U,u})} \\
            & {Y_1\times_S U} & U \\
            & {Y_1} & {S.}
            \arrow["h"{description}, dashed, from=1-1, to=2-2]
            \arrow["b"', bend right = -10pt, from=1-1, to=2-3]
            \arrow["{t\circ a}"', bend right = 30pt, from=1-1, to=5-2]
            \arrow["g"', from=2-2, to=2-3]
            \arrow["{i_u}", from=2-2, to=3-2]
            \arrow["i"', from=2-3, to=3-3]
            \arrow["{f^u_1}", from=3-2, to=3-3]
            \arrow["{s^\prime_u}"', from=3-2, to=4-2]
            \arrow["{s_u}", from=3-3, to=4-3]
            \arrow["{f_1^\prime}"', from=4-2, to=4-3]
            \arrow["{s^\prime}"', from=4-2, to=5-2]
            \arrow["s", from=4-3, to=5-3]
            \arrow["{f_1}"', from=5-2, to=5-3]
        \end{tikzcd}
    \end{displaymath}
    By the choice of $q$, it follows that 
    \begin{displaymath}
        \begin{aligned}
            \mathbf{L}(t\circ a)^\ast \operatorname{cone}(\eta_{G_1})
            &\cong \mathbf{L}(s^\prime \circ s^\prime_u\circ i_u \circ h)^\ast \operatorname{cone}(\eta_{G_1})
            \\&\cong \mathbf{L}(i_u \circ h)^\ast \operatorname{cone}(\eta^\prime_{\mathbf{L}(s^\prime \circ s^\prime_u)^\ast G_1})
        \end{aligned}
    \end{displaymath}
    is not in $D^b_{\operatorname{coh}}(\ell)$. 
    Hence, $\operatorname{cone}(\eta^\prime_{\mathbf{L}(s^\prime \circ s^\prime_u)^\ast G_1})$ cannot be perfect. 
    By \Cref{lem:fully_faithfull_left_adjoint_dsg}, $\Phi_{E_{s\circ s_u}}^{\operatorname{sg}}$ is not fully faithful.
    By \Cref{lem:singular_FM_locus_independence}, $p\not\in \operatorname{fm}_{\operatorname{sg}}(E)$. 

    Next we prove that $\operatorname{FM}_{\operatorname{sg}}(E)$ is open. 
    Arguing as above with $G_2$, $\epsilon$, and \Cref{lem:fully_faithfull_right_adjoint_dsg} in place of $G_1$, $\eta$, and \Cref{lem:fully_faithfull_left_adjoint_dsg}, it follows that $|f_2|(|Y_2|\setminus U_2)$ is the collection of $p\in |S|$ such that $\Phi_{(E_R)_{s\circ s_u}}^{\operatorname{sg}}$ is not fully faithful for some (equivalently, every) \'{e}tale germ $s\colon (U,u)\to (S,p)$ from an affine scheme where $s_u \colon \operatorname{Spec}(\mathcal{O}_{U,u}) \to U$ is the canonical morphism.
    Recall that $\Phi_{(E_R)_{s\circ s_u}}^{\operatorname{sg}}$ is right adjoint to $\Phi_{E_{s\circ s_u}}^{\operatorname{sg}}$.
    Note that a functor admitting a right adjoint is an equivalence if, and only if, both the functor and its right adjoint are fully faithful.
    Combining this with the first part of the proof, it follows that
    \begin{displaymath}
       |S| \setminus \operatorname{FM}_{\operatorname{sg}}(E) 
       = |f_1|(|Y_1| \setminus U_1) \bigcup |f_2|(|Y_2| \setminus U_2).
    \end{displaymath}
    This completes the proof.
\end{proof}

\begin{proposition}
    \label{prop:germwise_criteria}
    Assume \Cref{setup:relatively_perfect_adjoint_pair}.
    The following are equivalent:
    \begin{enumerate}
        \item \label{prop:germwise_criteria1} $\Phi_E^{\operatorname{sg}}$ is fully faithful (resp.\ an equivalence)
        \item \label{prop:germwise_criteria2} $\Phi_E^{\operatorname{sg}}$ is fully faithful (resp.\ an equivalence) at all $p\in |S|$
        \item \label{prop:germwise_criteria3} $\Phi_E^{\operatorname{sg}}$ is fully faithful (resp.\ an equivalence) at all closed points $p\in |S|$.
    \end{enumerate}
\end{proposition}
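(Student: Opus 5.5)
We prove $\eqref{prop:germwise_criteria1}\implies\eqref{prop:germwise_criteria2}\implies\eqref{prop:germwise_criteria3}\implies\eqref{prop:germwise_criteria1}$. The middle implication is trivial. Throughout, the key tool is the description from the proof of \Cref{thm:openness_singular_fm}:
\begin{displaymath}
    |S|\setminus \operatorname{fm}_{\operatorname{sg}}(E) = |f_1|(|Y_1|\setminus U_1),
    \qquad
    |S|\setminus \operatorname{FM}_{\operatorname{sg}}(E) = |f_1|(|Y_1|\setminus U_1)\cup |f_2|(|Y_2|\setminus U_2).
\end{displaymath}
Here $U_1$ (resp.\ $U_2$) is the open set of points of $|Y_1|$ (resp.\ $|Y_2|$) at which $\operatorname{cone}(\eta_{G_1})$ (resp.\ $\operatorname{cone}(\epsilon_{G_2})$) has derived fiber in $D^b_{\operatorname{coh}}$ of the residue field.

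For $\eqref{prop:germwise_criteria1}\implies\eqref{prop:germwise_criteria2}$, suppose $\Phi_E^{\operatorname{sg}}$ is fully faithful. By \Cref{lem:fully_faithful_iff_cone_of_classical_generator_is_perfect}, $\operatorname{cone}(\eta_{G_1})$ is perfect. Perfectness is preserved by derived pullback, so every derived fiber of $\operatorname{cone}(\eta_{G_1})$ at a point of $|Y_1|$ lies in $D^b_{\operatorname{coh}}$ of the field. Hence $U_1=|Y_1|$, and the displayed formula gives $\operatorname{fm}_{\operatorname{sg}}(E)=|S|$. For equivalence, $\Phi_E^{\operatorname{sg}}$ and its right adjoint $\Phi_{E_R}^{\operatorname{sg}}$ are both fully faithful. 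Applying \Cref{lem:fully_faithful_iff_cone_of_classical_generator_is_perfect} to each gives $U_1=|Y_1|$ and $U_2=|Y_2|$, hence $\operatorname{FM}_{\operatorname{sg}}(E)=|S|$.

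For $\eqref{prop:germwise_criteria3}\implies\eqref{prop:germwise_criteria1}$, treat full faithfulness first. The set $Z:=|f_1|(|Y_1|\setminus U_1)$ is closed in $|S|$, because $|f_1|$ is closed by properness and $U_1$ is open. By hypothesis, $Z$ contains no closed point of $|S|$. Since $S$ is quasi-compact and quasi-separated, every nonempty closed subset of $|S|$ contains a closed point of $|S|$ (\cite[\href{https://stacks.math.columbia.edu/tag/0A4H}{Tag 0A4H}]{StacksProject}, or the fact that $|S|$ is a Noetherian/spectral-like sober space). Hence $Z=\emptyset$, so $U_1=|Y_1|$. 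Thus every derived fiber of $\operatorname{cone}(\eta_{G_1})$ at a point of $Y_1$ is bounded coherent over the field. By \cite[Proposition 3.12]{GuisadoVillaalgordo/Lank:2026}, $\operatorname{cone}(\eta_{G_1})\in\operatorname{Perf}(Y_1)$, and \Cref{lem:fully_faithful_iff_cone_of_classical_generator_is_perfect} shows $\Phi_E^{\operatorname{sg}}$ is fully faithful. The equivalence case runs identically using the second formula: both $U_1=|Y_1|$ and $U_2=|Y_2|$. Then $\Phi_E^{\operatorname{sg}}$ and $\Phi_{E_R}^{\operatorname{sg}}$ are both fully faithful by \Cref{lem:fully_faithful_iff_cone_of_classical_generator_is_perfect}, and an adjoint pair with both functors fully faithful is an equivalence.

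The main subtlety is the topological point that a nonempty closed subset of $|S|$ contains a closed point of $|S|$. This needs the quasi-compact quasi-separated hypothesis on the algebraic space, and I would cite the Stacks Project result that $|S|$ is then a spectral space in which closed points are dense in closed subsets in this sense. A secondary point is that the pointwise criterion from \cite[Proposition 3.12]{GuisadoVillaalgordo/Lank:2026} applies on all of $Y_1$. This holds because $Y_1$ is Noetherian and $\operatorname{cone}(\eta_{G_1})\in D^b_{\operatorname{coh}}(Y_1)$, which follows since the adjunction restricts to $D^b_{\operatorname{coh}}$ by \Cref{setup:relatively_perfect_adjoint_pair}.
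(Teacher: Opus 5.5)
Your proof is correct and essentially the paper's: (3)$\implies$(1) matches it step for step. The paper phrases the topological step as ``$\operatorname{fm}_{\operatorname{sg}}(E)$ is open, hence closed under generalization, and every point of $|S|$ generalizes a closed point,'' which is equivalent to your ``a nonempty closed subset of $|S|$ contains a closed point,'' and then concludes via $U_1=|Y_1|$, \cite[Proposition 3.12]{GuisadoVillaalgordo/Lank:2026}, and \Cref{lem:fully_faithful_iff_cone_of_classical_generator_is_perfect} just as you do. The only difference is in (1)$\implies$(2), where the paper base changes $\operatorname{cone}(\eta_{G_1})$ directly to each germ and applies \Cref{lem:fully_faithful_iff_cone_of_classical_generator_is_perfect} over $\operatorname{Spec}(\mathcal{O}_{U,u})$, while you read the conclusion off the closed-set description in the proof of \Cref{thm:openness_singular_fm}; both are valid and rest on the same base-change isomorphism for the unit.
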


\begin{proof}
    We prove the case for $\Phi_E^{\operatorname{sg}}$ being fully faithful. 
    The case of equivalences follows by applying the same argument to $\Phi_{E_R}^{\operatorname{sg}}$.
    Indeed, $\Phi_E^{\operatorname{sg}}$ is an equivalence if, and only if, both $\Phi_E^{\operatorname{sg}}$ and its right adjoint $\Phi_{E_R}^{\operatorname{sg}}$ are fully faithful.
    It is obvious that $\eqref{prop:germwise_criteria2} \implies \eqref{prop:germwise_criteria3}$.
    We are left to prove that $\eqref{prop:germwise_criteria1} \implies \eqref{prop:germwise_criteria2}$ and $\eqref{prop:germwise_criteria3} \implies \eqref{prop:germwise_criteria1}$.

    First, we check that $\eqref{prop:germwise_criteria1} \implies \eqref{prop:germwise_criteria2}$.
    Choose $p\in |S|$.
    Pick an \'{e}tale germ $s\colon U \to S$ of $p$ from an affine scheme. 
    Fix $u\in |U|$ such that $|s|(u)=p$.
    Denote by $s_u \colon \operatorname{Spec}(\mathcal{O}_{U,u}) \to U$ the canonical morphism.
    Form the fibered rectangle 
    \begin{displaymath}
        \begin{tikzcd}
            {Y_1 \times_S \operatorname{Spec}(\mathcal{O}_{U,u})} & {\operatorname{Spec}(\mathcal{O}_{U,u})} \\
            {Y_1\times_S U} & U \\
            {Y_1} & {S.}
            \arrow["{f^u_1}", from=1-1, to=1-2]
            \arrow["{s^\prime_u}"', from=1-1, to=2-1]
            \arrow["{s_u}", from=1-2, to=2-2]
            \arrow["{f_1^\prime}"', from=2-1, to=2-2]
            \arrow["{s^\prime}"', from=2-1, to=3-1]
            \arrow["s", from=2-2, to=3-2]
            \arrow["{f_1}"', from=3-1, to=3-2]
        \end{tikzcd}
    \end{displaymath}
    By \cite[Proposition 1.4]{Lank:2026c} and \Cref{lem:counit_isomorphism_for_localization}, 
    \begin{displaymath}
        \mathbf{L}(s^\prime \circ s^\prime_u)^\ast \colon D^b_{\operatorname{coh}}(Y_1) \to D^b_{\operatorname{coh}}(Y_1\times_S \operatorname{Spec}(\mathcal{O}_{U,u}))
    \end{displaymath}
    is essentially dense. 
    Hence, $\mathbf{L}(s^\prime \circ s^\prime_u)^\ast G_1$ is a classical generator for $D^b_{\operatorname{coh}}(Y_1\times_S \operatorname{Spec}(\mathcal{O}_{U,u}))$. 
    Denote by $\eta^\prime$ the unit for the adjoint pair $\Phi_{E_{s \circ s_u}} \dashv \Phi_{(E_R)_{s \circ s_u}}$. 
    By \cite[Lemma 2.18]{GuisadoVillaalgordo/Lank:2026}, $s \circ s_u$ is quasi-affine, and so after base change, $s^\prime \circ s^\prime_u$ is quasi-affine.
    By \cite[Remark 4.29]{GuisadoVillaalgordo/Lank:2026}, there exist diagrams 
    \begin{displaymath}
        \begin{tikzcd}[column sep=large,row sep=large]
        \mathbf{L}(s^\prime \circ s^\prime_u)^\ast G_1
        \arrow[r,"\eta^\prime_{\mathbf{L}(s^\prime \circ s^\prime_u)^\ast G_1}"]
        \arrow[d,"\mathbf{L}(s^\prime \circ s^\prime_u)^\ast (\eta_{G_1})"']
        &
        \Phi_{(E_R)_{s \circ s_u}}
        \Phi_{E_{s \circ s_u}}
        (\mathbf{L}(s^\prime \circ s^\prime_u)^\ast G_1)
        \arrow[d,"\cong"]
        \\
        \mathbf{L}(s^\prime \circ s^\prime_u)^\ast \Phi_{E_R}\Phi_E(G_1)
        \arrow[r,"\cong"']
        &
        \Phi_{(E_R)_{s \circ s_u}}
        \mathbf{L}(s^\prime \circ s^\prime_u)^\ast \Phi_E(G_1).
        \end{tikzcd}
    \end{displaymath}
    Applying the octahedral axiom yields an isomorphism,
    \begin{displaymath}
        \mathbf{L}(s^\prime \circ s^\prime_u)^\ast \operatorname{cone}(\eta_{G_1})
        \cong \operatorname{cone}(\eta^\prime_{\mathbf{L}(s^\prime \circ s^\prime_u)^\ast G_1}).
    \end{displaymath}
    By \Cref{lem:fully_faithful_iff_cone_of_classical_generator_is_perfect}, 
    $\operatorname{cone}(\eta_{G_1})$ is perfect. 
    This implies
    \begin{displaymath}
        \operatorname{cone}(\eta^\prime_{\mathbf{L}(s^\prime \circ s^\prime_u)^\ast G_1})\in \operatorname{Perf}(Y_1\times_S \operatorname{Spec}(\mathcal{O}_{U,u})).
    \end{displaymath}
    Applying \Cref{lem:fully_faithful_iff_cone_of_classical_generator_is_perfect} over $\operatorname{Spec}(\mathcal{O}_{U,u})$, it follows that $\Phi_E^{\operatorname{sg}}$ is fully faithful at $p$.

    Lastly, we prove that $\eqref{prop:germwise_criteria3} \implies \eqref{prop:germwise_criteria1}$.
    By \Cref{thm:openness_singular_fm}, $\operatorname{fm}_{\operatorname{sg}}(E)$ is open, and hence closed under generalization. 
    If $\eqref{prop:germwise_criteria3}$ holds, then $\operatorname{fm}_{\operatorname{sg}}(E) = |S|$.
    Indeed, every point of $|S|$ is a generalization of a closed point, see \cite[Remark 3.11]{GuisadoVillaalgordo/Lank:2026}.
    Denote by $U_1$ the collection of $y\in |Y_1|$ such that $\mathbf{L}t^\ast \operatorname{cone}(\eta_{G_1})\in D^b_{\operatorname{coh}}(k)$ where $t$ is a representative of $y$. 
    The proof of \Cref{thm:openness_singular_fm} shows that 
    \begin{displaymath}
        |S| \setminus \operatorname{fm}_{\operatorname{sg}}(E) = |f_1|(|Y_1| \setminus U_1).
    \end{displaymath}
    Hence, $\operatorname{fm}_{\operatorname{sg}}(E) = |S|$ implies $|Y_1| \setminus U_1 = \emptyset$, and so $|Y_1| = U_1$.
    By \cite[Proposition 3.12]{GuisadoVillaalgordo/Lank:2026}, $\operatorname{cone}(\eta_{G_1})\in \operatorname{Perf}(Y_1)$.
    Thus, \Cref{lem:fully_faithful_iff_cone_of_classical_generator_is_perfect} asserts that $\Phi^{\operatorname{sg}}_E$ is fully faithful.
\end{proof}

\begin{corollary}
    \label{cor:criteria}
    Assume \Cref{setup:relatively_perfect_adjoint_pair} with $S=\operatorname{Spec}(k)$ for a field $k$.
    The following are equivalent:
    \begin{enumerate}
        \item $\Phi_E^{\operatorname{sg}}$ is fully faithful (resp.\ an equivalence)
        \item $\Phi_{E_L}^{\operatorname{sg}}$ is fully faithful (resp.\ an equivalence) for all finite separable field extensions $L/k$
        \item $\Phi_{E_L}^{\operatorname{sg}}$ is fully faithful (resp.\ an equivalence) for some finite separable field extension $L/k$.
    \end{enumerate}
\end{corollary}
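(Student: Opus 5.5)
The plan is to reduce everything to \Cref{prop:germwise_criteria} and \Cref{lem:singular_FM_locus_independence}, applied to the base $S=\operatorname{Spec}(k)$. The space $|S|$ has a single point $p$, which is closed. By \Cref{prop:germwise_criteria}, $\Phi_E^{\operatorname{sg}}$ is fully faithful (resp.\ an equivalence) if and only if it is so at $p$. By \Cref{lem:singular_FM_locus_independence}, this holds if and only if it holds for some, equivalently every, \'{e}tale germ $(U,u)\to(\operatorname{Spec}(k),p)$.

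The key step is to identify the germs. An \'{e}tale morphism $U\to\operatorname{Spec}(k)$ from a scheme has $U$ a disjoint union of spectra of finite separable extensions of $k$. Hence every local ring $\mathcal{O}_{U,u}$ is a finite separable extension $L/k$, and the composite $\operatorname{Spec}(\mathcal{O}_{U,u})\to U\to\operatorname{Spec}(k)$ is the morphism induced by $k\subseteq L$. Conversely, every finite separable $L/k$ arises this way: take $U=\operatorname{Spec}(L)$ with its unique point, which gives an \'{e}tale germ with $\mathcal{O}_{U,u}=L$. Under this identification, the base-changed kernel $E_{s\circ l_u}$ of \Cref{lem:singular_FM_locus_independence} is exactly $E_L$. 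So the set of base changes along germs coincides with $\{\Phi^{\operatorname{sg}}_{E_L} : L/k \text{ finite separable}\}$.

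With this, the equivalences follow directly. For (1)$\Rightarrow$(2), \Cref{prop:germwise_criteria} gives that $\Phi^{\operatorname{sg}}_E$ is fully faithful (resp.\ an equivalence) at $p$, and then \Cref{lem:singular_FM_locus_independence}(1) gives the property for every germ, hence for every $E_L$. The step (2)$\Rightarrow$(3) is trivial, taking $L=k$. For (3)$\Rightarrow$(1), a single $L$ provides a germ, so $p\in\operatorname{fm}_{\operatorname{sg}}(E)$ (resp.\ $\operatorname{FM}_{\operatorname{sg}}(E)$), and \Cref{prop:germwise_criteria} concludes.

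I expect the only real care to be needed in checking that the hypotheses of \Cref{setup:relatively_perfect_adjoint_pair} are satisfied over $\operatorname{Spec}(k)$. A field is a quasi-compact, quasi-separated, quasi-excellent scheme, so this holds. A second point to check is the identification of $\operatorname{Spec}(\mathcal{O}_{U,u})\to\operatorname{Spec}(k)$ with $\operatorname{Spec}(L)\to\operatorname{Spec}(k)$, so that the fibre products and kernels agree. This follows from the structure of \'{e}tale $k$-algebras as finite products of finite separable field extensions (Stacks Project).
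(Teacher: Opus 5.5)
Your proposal is correct and matches the paper's proof essentially step for step. Like the paper, you identify the étale germs of $(\operatorname{Spec}(k),p)$ with the finite separable extensions $L/k$, using the structure of étale $k$-algebras, and then combine \Cref{prop:germwise_criteria} with \Cref{lem:singular_FM_locus_independence}.
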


\begin{proof}
    Denote by $p$ the unique point of $|S|$, and let $(U,u) \to (S,p)$ be an \'{e}tale germ.
    By \cite[\href{https://stacks.math.columbia.edu/tag/02GL}{Tag 02GL}]{StacksProject}, $U$ is a disjoint union of spectra of finite separable field extensions of $k$, and so $\mathcal{O}_{U,u}$ is a finite separable field extension of $k$.
    Conversely, for any finite separable field extension $L/k$, the morphism $\operatorname{Spec}(L)\to S$ together with the unique point of $\operatorname{Spec}(L)$ is an \'{e}tale germ of $(S,p)$.
    Therefore, the claim follows from \Cref{prop:germwise_criteria} and \Cref{lem:singular_FM_locus_independence}.
\end{proof}

\begin{proposition}
    \label{prop:base_change}
    Assume \Cref{setup:relatively_perfect_adjoint_pair}.
    For any \'{e}tale morphism $t\colon S^\prime \to S$ of Noetherian algebraic spaces, $|t|^{-1}(\operatorname{fm}_{\operatorname{sg}}(E)) = \operatorname{fm}_{\operatorname{sg}}(E_t)$ and $|t|^{-1}(\operatorname{FM}_{\operatorname{sg}}(E)) = \operatorname{FM}_{\operatorname{sg}}(E_t)$.
\end{proposition}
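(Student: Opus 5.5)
The plan is to reduce the statement to the independence of germs in \Cref{lem:singular_FM_locus_independence}, by observing that étale germs of $(S^\prime,p^\prime)$ are, after composition with $t$, étale germs of $(S,|t|(p^\prime))$ with the same local rings. The first step is to check that the base-changed data over $S^\prime$ again satisfies \Cref{setup:relatively_perfect_adjoint_pair}. The algebraic space $S^\prime$ is quasi-excellent by \Cref{rmk:quasi-excellet_is_j-2}, since an étale cover of $S^\prime$ by a scheme composes with $t$ to an étale morphism to $S$. It is quasi-compact and quasi-separated, being Noetherian. The morphisms $Y_i\times_S S^\prime\to S^\prime$ are proper and flat, and $(Y_1\times_S Y_2)\times_S S^\prime\cong (Y_1\times_S S^\prime)\times_{S^\prime}(Y_2\times_S S^\prime)$. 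Relative perfectness of $E_t$ and of $(E_R)_t\cong (E_t)_R$ follows as in the remark after \Cref{def:singular_FM_locus}, using \cite[Corollary 4.11 \& Proposition 4.19]{GuisadoVillaalgordo/Lank:2026}. Since $t$ is étale, hence flat, the first of these is a flat base change.

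The key step is to fix $p^\prime\in|S^\prime|$, set $p=|t|(p^\prime)$, and choose an étale germ $s\colon(U,u)\to(S^\prime,p^\prime)$ with $U$ affine. Then $t\circ s\colon(U,u)\to(S,p)$ is an étale germ of $(S,p)$. The composite $\operatorname{Spec}(\mathcal{O}_{U,u})\to U\to S^\prime\to S$ is the same morphism whether we regard $U$ over $S^\prime$ or over $S$. By transitivity of fiber products and pseudofunctoriality of derived pullback, this gives an isomorphism $(E_t)_{s\circ l_u}\cong E_{t\circ s\circ l_u}$ over the canonical identification
\begin{displaymath}
    Y_1\times_S Y_2\times_S S^\prime\times_{S^\prime}\operatorname{Spec}(\mathcal{O}_{U,u})\cong Y_1\times_S Y_2\times_S\operatorname{Spec}(\mathcal{O}_{U,u}).
\end{displaymath}
The same holds for the $Y_i$ individually. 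Hence the two induced functors on singularity categories agree up to the identifications, and one is fully faithful (resp.\ an equivalence) if and only if the other is.

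With this in hand, both inclusions follow. If $p^\prime\in\operatorname{fm}_{\operatorname{sg}}(E_t)$, then some germ $(U,u)$ of $p^\prime$ works, so the germ $(U,u)\to(S,p)$ works and $p\in\operatorname{fm}_{\operatorname{sg}}(E)$. Conversely, if $p\in\operatorname{fm}_{\operatorname{sg}}(E)$, then by \Cref{lem:singular_FM_locus_independence} every germ of $(S,p)$ works, in particular $t\circ s$. Hence $p^\prime\in\operatorname{fm}_{\operatorname{sg}}(E_t)$. The argument for $\operatorname{FM}_{\operatorname{sg}}$ is identical.

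The main obstacle is the bookkeeping that the kernel $(E_R)_t$ used over $S^\prime$ matches the right adjoint kernel for $E_t$. Only then are "fully faithful" and "equivalence" in the two settings about the same adjunction. I would handle this via \cite[Proposition 4.19]{GuisadoVillaalgordo/Lank:2026}, using that $t$ and the localization morphisms are quasi-affine by \cite[Lemma 2.18]{GuisadoVillaalgordo/Lank:2026}. Alternatively, I would simply note that full faithfulness and equivalence are properties of $\Phi^{\operatorname{sg}}_{E_{t\circ s\circ l_u}}$ alone, independent of the chosen adjoint.
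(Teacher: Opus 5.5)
The way you compare the loci matches the paper's proof. You compose a germ of $(S^\prime,p^\prime)$ with $t$, identify the base changes by pseudofunctoriality, and use \Cref{lem:singular_FM_locus_independence} on $S$. The gap is in the preliminary step you flagged yourself: verifying \Cref{setup:relatively_perfect_adjoint_pair} over $S^\prime$. Concretely, you must show that $(E_t)_R$, formed over $S^\prime$, is relatively perfect over each $Y_i\times_S S^\prime$. You propose to get this from $(E_R)_t\cong (E_t)_R$ via \cite[Proposition 4.19]{GuisadoVillaalgordo/Lank:2026}, claiming $t$ is quasi-affine by \cite[Lemma 2.18]{GuisadoVillaalgordo/Lank:2026}.

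That lemma only covers morphisms into a quasi-separated space \emph{from a quasi-affine scheme}. Here $S^\prime$ is an arbitrary Noetherian algebraic space, and an \'{e}tale morphism need not be quasi-affine, or even separated. The affine line with doubled origin over $\mathbf{A}^1$ is an example. So the kernel comparison along $t$ is not available from the tools you cite. Your fallback, that full faithfulness is a property of the functor alone, does not repair this. Without relative perfectness of $(E_t)_R$ the setup fails over $S^\prime$, and then $\operatorname{fm}_{\operatorname{sg}}(E_t)$ and $\operatorname{FM}_{\operatorname{sg}}(E_t)$ are not even defined.

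The fix is what the paper does: pass to an \'{e}tale presentation and descend.
\begin{enumerate}
    \item Choose an \'{e}tale presentation $s\colon U\to S^\prime$ with $U$ affine. Then $s$ and $t\circ s$ are quasi-affine, since Lemma 2.18 of loc.\ cit.\ now applies.
    \item Lemma 4.18 of loc.\ cit., applied along $s$ and along $t\circ s$, gives $((E_t)_R)_s\cong (E_{t\circ s})_R\cong (E_R)_{t\circ s}$.
    \item The last object is relatively perfect over each $Y_i\times_S U$ by Corollary 4.11 of loc.\ cit.
    \item Since $s$ is faithfully flat, Theorem 4.9 of loc.\ cit.\ descends relative perfectness to $(E_t)_R$ over each $Y_i\times_S S^\prime$.
\end{enumerate}
With this in place the rest of your argument goes through as written. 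For the inclusion $\operatorname{fm}_{\operatorname{sg}}(E_t)\subseteq |t|^{-1}(\operatorname{fm}_{\operatorname{sg}}(E))$ you use the witnessing germ directly, whereas the paper routes it through \Cref{lem:singular_FM_locus_independence} over $S^\prime$. This difference is harmless.
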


\begin{proof}
    Choose an \'{e}tale presentation $s\colon U \to S^\prime$ by an affine scheme.
    We first check that \Cref{setup:relatively_perfect_adjoint_pair} holds for $E_t$ with respect to the base changes $Y_i\times_S S^\prime \to S^\prime$.
    This ensures that $\operatorname{fm}_{\operatorname{sg}}(E_t)$ and $\operatorname{FM}_{\operatorname{sg}}(E_t)$ are defined.
    As $t\circ s$ is \'{e}tale, \Cref{rmk:quasi-excellet_is_j-2} implies that $U$ is quasi-excellent, and so $S^\prime$ is quasi-excellent.
    Base change implies each $Y_i\times_S S^\prime \to S^\prime$ is proper and flat.
    By \cite[Corollary 4.11]{GuisadoVillaalgordo/Lank:2026}, $E_t$ is relatively perfect over each $Y_i\times_S S^\prime$.
    Denote by $(E_t)_R$ the object associated to $E_t$ over $S^\prime$ as in \Cref{setup:relatively_perfect_adjoint_pair}, and similarly for $(E_{t\circ s})_R$ over $U$.
    By \cite[Proposition 4.19]{GuisadoVillaalgordo/Lank:2026}, $(E_t)_R\in D^b_{\operatorname{coh}}(Y_1\times_S Y_2\times_S S^\prime)$.
    By \cite[Lemma 2.18]{GuisadoVillaalgordo/Lank:2026}, $s$ and $t\circ s$ are quasi-affine.
    Hence, applying \cite[Lemma 4.18]{GuisadoVillaalgordo/Lank:2026} along $s$ and along $t\circ s$ yields isomorphisms
    \begin{displaymath}
        ((E_t)_R)_s \cong (E_{t\circ s})_R \cong (E_R)_{t\circ s}.
    \end{displaymath}
    Here $((E_t)_R)_s$ denotes the derived pullback of $(E_t)_R$ to $Y_1\times_S Y_2\times_S U$.
    By \cite[Corollary 4.11]{GuisadoVillaalgordo/Lank:2026}, $(E_R)_{t\circ s}$ is relatively perfect over each $Y_i\times_S U$.
    As $s$ is faithfully flat, \cite[Theorem 4.9]{GuisadoVillaalgordo/Lank:2026} implies that $(E_t)_R$ is relatively perfect over each $Y_i\times_S S^\prime$.

    For any $u\in |U|$, let $l_u \colon \operatorname{Spec}(\mathcal{O}_{U,u}) \to U$ be the canonical morphism. 
    Moreover, for any $u\in |U|$, there exist commutative diagrams
    \begin{displaymath}
        \begin{tikzcd}
            {Y_1\times_S Y_2\times_S \operatorname{Spec}(\mathcal{O}_{U,u})} & {Y_1\times_S Y_2\times_S U} & {Y_1\times_S Y_2\times_S S^\prime} & {Y_1\times_S Y_2} \\
            {\operatorname{Spec}(\mathcal{O}_{U,u})} & U & {S^\prime} & S
            \arrow["{l^\prime_u}", from=1-1, to=1-2]
            \arrow[from=1-1, to=2-1]
            \arrow["{s^\prime}", from=1-2, to=1-3]
            \arrow[from=1-2, to=2-2]
            \arrow["{t^\prime}", from=1-3, to=1-4]
            \arrow[from=1-3, to=2-3]
            \arrow[from=1-4, to=2-4]
            \arrow["{l_u}", from=2-1, to=2-2]
            \arrow["s", from=2-2, to=2-3]
            \arrow["t", from=2-3, to=2-4]
        \end{tikzcd}
    \end{displaymath}
    where the vertical morphisms are compositions of the canonical ones obtained from the base changes of the $f_i$. 
    We use these diagrams implicitly below.
    We prove the claim for $\operatorname{fm}_{\operatorname{sg}}$.
    The claim for $\operatorname{FM}_{\operatorname{sg}}$ follows from the same argument after replacing `fully faithful' by `an equivalence'.

    Pick some $p\in |t|^{-1}(\operatorname{fm}_{\operatorname{sg}}(E))$. 
    Let $u\in|s|^{-1}(p)$. 
    Then $t\circ s \colon (U,u)\to (S,|t|(p))$ is an \'{e}tale germ. 
    Since $|t|(p)\in \operatorname{fm}_{\operatorname{sg}}(E)$, \Cref{lem:singular_FM_locus_independence} implies that $\Phi^{\operatorname{sg}}_{\mathbf{L}(t^\prime \circ s^\prime \circ l^\prime_u)^\ast E}$ is fully faithful.
    By pseudofunctoriality, $\Phi^{\operatorname{sg}}_{\mathbf{L}(s^\prime \circ l^\prime_u)^\ast \mathbf{L}(t^\prime)^\ast E}$ is fully faithful.
    As $s\colon (U,u) \to (S^\prime,p)$ is an \'{e}tale germ, $\Phi^{\operatorname{sg}}_{E_t}$ is fully faithful at $p$, which yields that $|t|^{-1}(\operatorname{fm}_{\operatorname{sg}}(E)) \subseteq \operatorname{fm}_{\operatorname{sg}}(E_t)$.

    Conversely, let $p\in \operatorname{fm}_{\operatorname{sg}}(E_t)$. 
    Let $u\in |s|^{-1}(p)$.
    By \Cref{lem:singular_FM_locus_independence} applied to $E_t$, it follows that $\Phi^{\operatorname{sg}}_{\mathbf{L}(s^\prime \circ l^\prime_u)^\ast \mathbf{L}(t^\prime)^\ast E}$ is fully faithful.
    Again, by pseudofunctoriality, $\Phi^{\operatorname{sg}}_{\mathbf{L}(t^\prime \circ s^\prime \circ l^\prime_u)^\ast E}$ is fully faithful.
    Since $t\circ s\colon (U,u) \to (S,|t|(p))$ is an \'{e}tale germ, $\Phi^{\operatorname{sg}}_{E}$ is fully faithful at $|t|(p)$. 
    This shows that $\operatorname{fm}_{\operatorname{sg}}(E_t) \subseteq |t|^{-1}(\operatorname{fm}_{\operatorname{sg}}(E))$, which completes the proof.
\end{proof}

\section{Examples}
\label{sec:examples}

\begin{example}
    \label{ex:kernels_for_smooth}
    Let $S$ be a Noetherian algebraic space, $f_i \colon Y_i \to S$ be smooth proper morphisms of algebraic spaces, and let $E\in D^b_{\operatorname{coh}}(Y_1 \times_S Y_2)$ be relatively perfect over each $Y_i$.
    Then $E$ is perfect.
    Indeed, form the fibered square
    \begin{displaymath}
        \begin{tikzcd}
            {Y_1 \times_S Y_2} & {Y_2} \\
            {Y_1} & {S.}
            \arrow["{p_2}", from=1-1, to=1-2]
            \arrow["{p_1}"', from=1-1, to=2-1]
            \arrow["{f_2}", from=1-2, to=2-2]
            \arrow["{f_1}"', from=2-1, to=2-2]
        \end{tikzcd}
    \end{displaymath}
    Base change implies each $p_i$ is smooth and proper. 
    Choose $p\in |S|$.
    Set $t_p \colon \operatorname{Spec}(\kappa(p)) \to S$ to be the canonical morphism from the associated residue field. 
    Consider the fibered cube 
    \begin{displaymath}
        \begin{tikzcd}[column sep=small,row sep=small]
            {Y_1\times_S Y_2 \times_S\operatorname{Spec}(\kappa(p))} && {Y_2 \times_S \operatorname{Spec}(\kappa(p))} & \\
            & {Y_1\times_S \operatorname{Spec}(\kappa(p))} && {\operatorname{Spec}(\kappa(p))} \\
            {Y_1} && {Y_2} \\
            & {Y_1} && {S.}
            \arrow["{q_2}", from=1-1, to=1-3]
            \arrow["{q_1}", from=1-1, to=2-2]
            \arrow["{t^\prime_p}"', from=1-1, to=3-1]
            \arrow["{f^p_2}", from=1-3, to=2-4]
            \arrow["{t_2^p}"{pos=0.3}, dashed, from=1-3, to=3-3]
            \arrow["{f^p_1}"{pos=0.2}, from=2-2, to=2-4]
            \arrow["{t_1^p}"'{pos=0.7}, from=2-2, to=4-2]
            \arrow["{t_p}", from=2-4, to=4-4]
            \arrow["{p_2}"{pos=0.7}, dashed, from=3-1, to=3-3]
            \arrow["{p_1}"', from=3-1, to=4-2]
            \arrow["{f_2}", dashed, from=3-3, to=4-4]
            \arrow["{f_1}"', from=4-2, to=4-4]
        \end{tikzcd}
    \end{displaymath}
    By \cite[Corollary 4.11]{GuisadoVillaalgordo/Lank:2026}, $\mathbf{L}(t^\prime_p)^\ast E$ is relatively perfect over each $Y_i \times_S \operatorname{Spec}(\kappa(p))$.
    Moreover, base change implies each $f^p_i$ and each $q_i$ are smooth, and so $Y_1 \times_S Y_2 \times_S \operatorname{Spec}(\kappa(p))$ is smooth over $\kappa(p)$.
    Hence, $Y_1 \times_S Y_2 \times_S \operatorname{Spec}(\kappa(p))$ is regular. 
    By \cite[Lemma 5.28]{GuisadoVillaalgordo/Lank:2026}, 
    \begin{displaymath}
        D^b_{\operatorname{coh}}(Y_1 \times_S Y_2 \times_S \operatorname{Spec}(\kappa(p))) = \operatorname{Perf}(Y_1 \times_S Y_2 \times_S \operatorname{Spec}(\kappa(p))).
    \end{displaymath}
    Thus, $\mathbf{L}(t^\prime_p)^\ast E$ is perfect for all $p\in |S|$, and so \cite[Lemma 5.30]{GuisadoVillaalgordo/Lank:2026} implies $E$ is perfect. 
\end{example}

\begin{example}
    \label{ex:perfect_kernels}
    Let $S$ be a quasi-compact quasi-separated quasi-excellent algebraic space. 
    Consider proper Gorenstein morphisms $f_i \colon Y_i \to S$ of Noetherian algebraic spaces. 
    Denote by $p_i \colon Y_1 \times_S Y_2 \to Y_i$ the canonical projections. 
    For any $Q\in \operatorname{Perf}(Y_1 \times_S Y_2)$, $Q$ is relatively perfect over each $Y_i$.
    Indeed, base change implies each $p_i$ is proper and Gorenstein (hence, flat), and so \Cref{lem:quasi-perfect_morphism} implies each $p_i$ is quasi-perfect.
    Hence, $\mathbf{R}(p_i)_\ast \operatorname{Perf}(Y_1\times_S Y_2) \subseteq \operatorname{Perf}(Y_i)$ for each $i$. 
    As $Q \otimes^{\mathbf{L}} -$ is an endofunctor on $\operatorname{Perf}(Y_1\times_S Y_2)$, it follows that $Q$ is relatively perfect over each $Y_i$. 
    Indeed, apply \cite[Corollary 1.9]{GuisadoVillaalgordo/Lank:2026} to $\Phi_Q$ and to the integral transform $D_{\operatorname{qc}}(Y_2) \to D_{\operatorname{qc}}(Y_1)$ with kernel $Q$, i.e.\ with the roles of the two factors switched.
    Set $Q_R:=\operatorname{\mathbf{R}\mathcal{H}\!\mathit{om}}(Q, p_2^! \mathcal{O}_{Y_2})$.
    By \cite[Proposition 4.19 \& Corollary 4.20]{GuisadoVillaalgordo/Lank:2026}, $Q_R\in D^b_{\operatorname{coh}}(Y_1\times_S Y_2)$, $\Phi_Q$ is left adjoint to $\Phi_{Q_R}$, and the adjunction restricts to $D^b_{\operatorname{coh}}$. 
    By \cite[Lemma 5.20]{GuisadoVillaalgordo/Lank:2026}, $p_2^! \mathcal{O}_{Y_2}$ is perfect. 
    By \cite[Lemma 2.10]{GuisadoVillaalgordo/Lank:2026},
    \begin{displaymath}
        \operatorname{\mathbf{R}\mathcal{H}\!\mathit{om}}(Q, p_2^! \mathcal{O}_{Y_2}) 
        \cong \operatorname{\mathbb{R}\mathcal{H}\!\mathit{om}}(Q, p_2^! \mathcal{O}_{Y_2}).
    \end{displaymath}
    By \cite[\href{https://stacks.math.columbia.edu/tag/08JJ}{Tag 08JJ}]{StacksProject}, 
    \begin{displaymath}
        \operatorname{\mathbb{R}\mathcal{H}\!\mathit{om}}(Q, p_2^! \mathcal{O}_{Y_2})
        \cong \operatorname{\mathbb{R}\mathcal{H}\!\mathit{om}}(Q, \mathcal{O}_{Y_1\times_S Y_2}) \otimes^{\mathbf{L}} p_2^! \mathcal{O}_{Y_2}.
    \end{displaymath}
    A further application of \cite[Lemma 2.10]{GuisadoVillaalgordo/Lank:2026} yields that 
    \begin{displaymath}
        \operatorname{\mathbb{R}\mathcal{H}\!\mathit{om}}(Q, \mathcal{O}_{Y_1\times_S Y_2}) 
        \cong \operatorname{\mathbf{R}\mathcal{H}\!\mathit{om}}(Q, \mathcal{O}_{Y_1\times_S Y_2}).
    \end{displaymath}
    Hence, we obtain 
    \begin{displaymath}
        \operatorname{\mathbf{R}\mathcal{H}\!\mathit{om}}(Q, p_2^! \mathcal{O}_{Y_2})
        \cong \operatorname{\mathbf{R}\mathcal{H}\!\mathit{om}}(Q, \mathcal{O}_{Y_1\times_S Y_2}) \otimes^{\mathbf{L}} p_2^! \mathcal{O}_{Y_2}.
    \end{displaymath}
    Since $\operatorname{\mathbf{R}\mathcal{H}\!\mathit{om}}(Q, \mathcal{O}_{Y_1\times_S Y_2})$ and $p_2^! \mathcal{O}_{Y_2}$ are perfect, so is $Q_R$.
    Consequently, the first observation implies that $Q_R$ is relatively perfect over each $Y_i$. 
    Thus, \Cref{setup:relatively_perfect_adjoint_pair} applies to every $Q\in \operatorname{Perf}(Y_1\times_S Y_2)$.
\end{example}

\begin{proposition}
    \label{prop:compare_dsg_for_smooth}
    Let $S$ be a quasi-compact quasi-separated quasi-excellent algebraic space, $f\colon Y \to S$ be a proper flat morphism of algebraic spaces, and $g \colon W \to Y$ be a proper Gorenstein morphism of algebraic spaces.
    Set $\Gamma_g = (1_W,g) \colon W \to W \times_S Y$ to be the graph of $g$. 
    Then $\mathbf{R}(\Gamma_g)_\ast \mathcal{O}_W$ satisfies \Cref{setup:relatively_perfect_adjoint_pair} with respect to $f\circ g$ and $f$.
    Moreover, $\Phi_{\mathbf{R}(\Gamma_g)_\ast \mathcal{O}_W}$ is naturally isomorphic to $\mathbf{R}g_\ast$, and $\operatorname{fm}_{\operatorname{sg}}(\mathbf{R}(\Gamma_g)_\ast \mathcal{O}_W)$ and $\operatorname{FM}_{\operatorname{sg}}(\mathbf{R}(\Gamma_g)_\ast \mathcal{O}_W)$ are open subsets of $|S|$.
\end{proposition}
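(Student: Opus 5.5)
We verify the hypotheses of \Cref{setup:relatively_perfect_adjoint_pair} for $Y_1 = W$ (over $S$ via $f\circ g$), $Y_2 = Y$ (via $f$) and $K := \mathbf{R}(\Gamma_g)_\ast \mathcal{O}_W$. Openness is then immediate from \Cref{thm:openness_singular_fm}. Noetherianity is implicit, as in the setup. Since $g$ is Gorenstein it is flat, so $f\circ g$ is proper and flat. Also $\Gamma_g$ is a closed immersion, because $Y\to S$ is separated, being proper. Hence $K \in D^b_{\operatorname{coh}}(W\times_S Y)$.

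\textbf{Identifying the transform.} Let $p_1, p_2$ be the projections of $W\times_S Y$. Then $p_1\circ\Gamma_g = 1_W$ and $p_2\circ \Gamma_g = g$. The projection formula for $\Gamma_g$ gives
\begin{displaymath}
    \mathbf{L}p_1^\ast A \otimes^{\mathbf{L}} \mathbf{R}(\Gamma_g)_\ast\mathcal{O}_W \cong \mathbf{R}(\Gamma_g)_\ast \mathbf{L}\Gamma_g^\ast \mathbf{L}p_1^\ast A \cong \mathbf{R}(\Gamma_g)_\ast A.
\end{displaymath}
Applying $\mathbf{R}(p_2)_\ast$ then yields $\Phi_K \cong \mathbf{R}g_\ast$, and the isomorphism is natural in $A$.

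\textbf{Relative perfectness.} By \cite[Corollary 1.9]{GuisadoVillaalgordo/Lank:2026}, $K$ is relatively perfect over each factor precisely when $\Phi_K$ and the transform in the opposite direction both preserve $D^b_{\operatorname{coh}}$ and $\operatorname{Perf}$. The opposite transform is $\mathbf{R}(p_1)_\ast(\mathbf{L}p_2^\ast(-)\otimes^{\mathbf{L}} K) \cong \mathbf{L}g^\ast$, by the same projection-formula computation.
\begin{itemize}
    \item $\mathbf{R}g_\ast$ preserves $D^b_{\operatorname{coh}}$ because $g$ is proper. It preserves $\operatorname{Perf}$ by \Cref{lem:quasi-perfect_morphism}, since $g$ is flat and hence of finite tor-dimension.
    \item $\mathbf{L}g^\ast$ preserves $\operatorname{Perf}$ in general. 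It preserves $D^b_{\operatorname{coh}}$ because $g$ is flat.
\end{itemize}

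Alternatively, one can argue directly on the kernel. Relative perfectness of $\mathbf{R}(\Gamma_g)_\ast\mathcal{O}_W$ over $W$ holds since $p_1\circ\Gamma_g = 1_W$. Over $Y$ it holds since $p_2\circ\Gamma_g = g$ is flat and of finite type, so $\mathcal{O}_W$ is $g$-perfect.

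\textbf{The right adjoint kernel.} Set $K_R := \mathbf{R}\mathcal{H}\!\mathit{om}(K, p_2^!\mathcal{O}_Y)$. The adjunction of the setup identifies $\Phi_{K_R}$ with the right adjoint $g^!$ of $\mathbf{R}g_\ast$. Grothendieck duality for the closed immersion $\Gamma_g$ gives
\begin{displaymath}
    K_R \cong \mathbf{R}(\Gamma_g)_\ast \Gamma_g^! p_2^! \mathcal{O}_Y \cong \mathbf{R}(\Gamma_g)_\ast g^!\mathcal{O}_Y.
\end{displaymath}
Because $g$ is proper and Gorenstein, $g^!\mathcal{O}_Y$ is an invertible sheaf up to shift, say $\omega_g[d]$ locally; this uses \cite[Lemma 5.20]{GuisadoVillaalgordo/Lank:2026} applied to $g$. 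Hence $K_R \cong \mathbf{R}(\Gamma_g)_\ast \mathcal{L}$ for a shifted line bundle $\mathcal{L}$ on $W$. The projection-formula computation above then gives $\Phi_{K_R} \cong \mathcal{L}\otimes^{\mathbf{L}}\mathbf{L}g^\ast(-)$, and the opposite transform with kernel $K_R$ is $\mathbf{R}g_\ast(\mathcal{L}\otimes^{\mathbf{L}} -)$. Both preserve $D^b_{\operatorname{coh}}$ and $\operatorname{Perf}$ by the same arguments as before. So $K_R$ is relatively perfect over each factor, again by \cite[Corollary 1.9]{GuisadoVillaalgordo/Lank:2026}.

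The main obstacle I expect is the duality step $\Gamma_g^!p_2^! \cong g^!$ and the invertibility of $g^!\mathcal{O}_Y$ on algebraic spaces. I would cite Neeman's pseudofunctoriality of $(-)^!$ for proper maps \cite{Neeman:2023}, together with the Gorenstein characterization in \cite{GuisadoVillaalgordo/Lank:2026}.
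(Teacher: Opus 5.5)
Your proposal is correct. Up to the treatment of the right adjoint kernel it follows the paper: $\Gamma_g$ is a closed immersion, the projection formula gives $\Phi_K\cong\mathbf{R}g_\ast$, $g$ is quasi-perfect, and you conclude with \cite[Corollary 1.9]{GuisadoVillaalgordo/Lank:2026}. Your extra check of the opposite transform $\mathbf{L}g^\ast$ is harmless; the paper only checks $\Phi_K$.

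For $K_R$ the routes genuinely differ. The paper never computes $K_R$. It identifies the functor $\Phi_{K_R}$ with $g^\times\cong g^!$ by uniqueness of right adjoints. It then uses the finite tor-dimension formula $g^!(-)\cong\mathbf{L}g^\ast(-)\otimes^{\mathbf{L}}g^!\mathcal{O}_Y$ of \cite[Lemma C.4]{GuisadoVillaalgordo/Lank:2026}, together with invertibility of $g^!\mathcal{O}_Y$, and checks that $\Phi_{K_R}$ preserves $D^b_{\operatorname{coh}}$ and $\operatorname{Perf}$.

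You instead compute the kernel itself, $K_R\cong\mathbf{R}(\Gamma_g)_\ast g^!\mathcal{O}_Y$, which is a graph twisted by an invertible object. Both transforms with kernel $K_R$ then come out of the same projection-formula computation you used for $K$. This is more explicit, and your remark identifying $\Phi_{K_R}$ with $g^!$ is not needed for your argument. Combined with that identification, your computation in fact reproves the formula of Lemma C.4 in this situation rather than quoting it. The price is invoking sheafified duality for $\Gamma_g$.

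The obstacle you anticipate is mild. Since $\Gamma_g$, $p_2$ and $g$ are proper, $(-)^!$ agrees with the right adjoint $(-)^\times$ of derived pushforward \cite[Lemma C.2]{GuisadoVillaalgordo/Lank:2026}. Hence $\Gamma_g^!p_2^!\cong g^!$ is formal, because right adjoints of a composite compose. The isomorphism $\mathbf{R}\mathcal{H}\!\mathit{om}(\mathbf{R}(\Gamma_g)_\ast M,N)\cong\mathbf{R}(\Gamma_g)_\ast\mathbf{R}\mathcal{H}\!\mathit{om}(M,\Gamma_g^\times N)$ on $D_{\operatorname{qc}}$ follows from the Yoneda lemma and the projection formula.

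Finally, Noetherianity is not merely implicit but automatic. $S$ is quasi-compact, quasi-separated and \'{e}tale locally Noetherian, so it is Noetherian, and $W$ and $Y$ are of finite type over it.
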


\begin{proof}
    Form the fibered square
    \begin{displaymath}
        \begin{tikzcd}
            {W \times_S Y} & Y \\
            W & {S.}
            \arrow["{p_2}", from=1-1, to=1-2]
            \arrow["{p_1}"', from=1-1, to=2-1]
            \arrow["f", from=1-2, to=2-2]
            \arrow["{f\circ g}"', from=2-1, to=2-2]
        \end{tikzcd}
    \end{displaymath}
    Note that $f\circ g$ is proper and flat.
    Base change implies $p_1$ and $p_2$ are proper and flat.
    As $f$ is proper, it is separated, and so $\Gamma_g$ is a closed immersion \cite[\href{https://stacks.math.columbia.edu/tag/03KO}{Tag 03KO}]{StacksProject}.
    In particular, $(\Gamma_g)_\ast$ is exact, and so $\mathbf{R}(\Gamma_g)_\ast \mathcal{O}_W \cong (\Gamma_g)_\ast \mathcal{O}_W$ \cite[\href{https://stacks.math.columbia.edu/tag/073H}{Tag 073H}]{StacksProject}.
    Hence, $\mathbf{R}(\Gamma_g)_\ast \mathcal{O}_W\in D^b_{\operatorname{coh}}(W\times_S Y)$.

    We first identify $\Phi_{\mathbf{R}(\Gamma_g)_\ast \mathcal{O}_W}$ with $\mathbf{R}g_\ast$.
    For $A\in D_{\operatorname{qc}}(W)$, there exist natural isomorphisms:
    \begin{displaymath}
        \begin{aligned}
            \Phi_{\mathbf{R}(\Gamma_g)_\ast \mathcal{O}_W}(A)
            &= \mathbf{R}(p_2)_\ast (\mathbf{L}p_1^\ast A \otimes^{\mathbf{L}} \mathbf{R}(\Gamma_g)_\ast \mathcal{O}_W)
            \\&\cong \mathbf{R}(p_2)_\ast \mathbf{R}(\Gamma_g)_\ast \mathbf{L}\Gamma_g^\ast \mathbf{L}p_1^\ast A && \textrm{(\cite[\href{https://stacks.math.columbia.edu/tag/08IN}{Tag 08IN}]{StacksProject})}
            \\&\cong \mathbf{R}(p_2\circ \Gamma_g)_\ast \mathbf{L}(p_1\circ \Gamma_g)^\ast A && \textrm{(pseudofunctoriality)}
            \\&= \mathbf{R}g_\ast A && (p_1 \circ \Gamma_g = 1_W, p_2 \circ \Gamma_g = g).
        \end{aligned}
    \end{displaymath}
    As $g$ is proper and flat, it has finite tor-dimension, and so \Cref{lem:quasi-perfect_morphism} implies $g$ is quasi-perfect.
    Hence, $\mathbf{R}g_\ast$ preserves perfect complexes.
    Moreover, properness of $g$ implies $\mathbf{R}g_\ast$ preserves bounded pseudocoherent complexes.
    By \cite[Corollary 1.9]{GuisadoVillaalgordo/Lank:2026}, $\mathbf{R}(\Gamma_g)_\ast \mathcal{O}_W$ is relatively perfect over $W$ and $Y$.

    Next, we treat the right adjoint.
    By \cite[Proposition 4.19 \& Corollary 4.20]{GuisadoVillaalgordo/Lank:2026}, 
    \begin{displaymath}
        \operatorname{\mathbf{R}\mathcal{H}\!\mathit{om}}(\mathbf{R}(\Gamma_g)_\ast \mathcal{O}_W, p_2^! \mathcal{O}_Y) \in D^b_{\operatorname{coh}}(W \times_S Y)
    \end{displaymath}
    and on $D_{\operatorname{qc}}$,
    \begin{displaymath}
        \Phi_{\mathbf{R}(\Gamma_g)_\ast \mathcal{O}_W} 
        \dashv \Phi_{\operatorname{\mathbf{R}\mathcal{H}\!\mathit{om}}(\mathbf{R}(\Gamma_g)_\ast \mathcal{O}_W, p_2^! \mathcal{O}_Y)}.
    \end{displaymath}
    On the other hand, $\mathbf{R}g_\ast \dashv g^\times$ on $D_{\operatorname{qc}}$, and $g^\times \cong g^!$ by \cite[Lemma C.2]{GuisadoVillaalgordo/Lank:2026}.
    By the uniqueness of adjoints, there exists a natural isomorphism 
    \begin{displaymath}
        \Phi_{\operatorname{\mathbf{R}\mathcal{H}\!\mathit{om}}(\mathbf{R}(\Gamma_g)_\ast \mathcal{O}_W, p_2^! \mathcal{O}_Y)} \cong g^!.
    \end{displaymath}
    As $g$ has finite tor-dimension, \cite[Lemma C.4]{GuisadoVillaalgordo/Lank:2026} gives a natural isomorphism for all $P\in D_{\operatorname{qc}}(Y)$,
    \begin{displaymath}
        \mathbf{L}g^\ast P \otimes^{\mathbf{L}} g^! \mathcal{O}_Y \xrightarrow{\cong} g^! P.
    \end{displaymath}
    Moreover, $g$ being proper, flat, and Gorenstein implies $g^! \mathcal{O}_Y$ is invertible \cite[Lemma 5.20]{GuisadoVillaalgordo/Lank:2026}, and  hence perfect.
    Consequently, there exists a natural isomorphism
    \begin{displaymath}
        \Phi_{\operatorname{\mathbf{R}\mathcal{H}\!\mathit{om}}(\mathbf{R}(\Gamma_g)_\ast \mathcal{O}_W, p_2^! \mathcal{O}_Y)} (-) \cong \mathbf{L}g^\ast (-) \otimes^{\mathbf{L}} g^! \mathcal{O}_Y.
    \end{displaymath}
    The functor $\mathbf{L}g^\ast$ preserves perfect complexes.
    Since $g$ is flat, $\mathbf{L}g^\ast$ preserves bounded pseudocoherent complexes.
    Moreover, as $g^! \mathcal{O}_Y$ is invertible, $(-)\otimes^{\mathbf{L}} g^! \mathcal{O}_Y$ is an endofunctor on $D^b_{\operatorname{coh}}(W)$ and $\operatorname{Perf}(W)$. 
    Hence, $\Phi_{\operatorname{\mathbf{R}\mathcal{H}\!\mathit{om}}(\mathbf{R}(\Gamma_g)_\ast \mathcal{O}_W, p_2^! \mathcal{O}_Y)}$ preserves perfect complexes and bounded pseudocoherent complexes.
    By \cite[Corollary 1.9]{GuisadoVillaalgordo/Lank:2026},
    \begin{displaymath}
        \operatorname{\mathbf{R}\mathcal{H}\!\mathit{om}}(\mathbf{R}(\Gamma_g)_\ast \mathcal{O}_W, p_2^! \mathcal{O}_Y)
    \end{displaymath}
    is relatively perfect over $W$ and $Y$.

    Consequently, $\mathbf{R}(\Gamma_g)_\ast \mathcal{O}_W$ satisfies \Cref{setup:relatively_perfect_adjoint_pair} with respect to $f\circ g \colon W \to S$ and $f\colon Y \to S$.
    By \Cref{thm:openness_singular_fm}, $\operatorname{fm}_{\operatorname{sg}}(\mathbf{R}(\Gamma_g)_\ast \mathcal{O}_W)$ and $\operatorname{FM}_{\operatorname{sg}}(\mathbf{R}(\Gamma_g)_\ast \mathcal{O}_W)$ are open subsets of $|S|$.
\end{proof}

\begin{proof}
    [Proof of \Cref{prop:ideal_sheaf_diagonal}]
    As $f$ is proper, it is separated, and so $\Delta_f$ is a closed immersion \cite[\href{https://stacks.math.columbia.edu/tag/03HL}{Tag 03HL}]{StacksProject}.
    Form the fibered square
    \begin{displaymath}
        \begin{tikzcd}
            {Y\times_S Y} & Y \\
            Y & {S.}
            \arrow["{p_2}", from=1-1, to=1-2]
            \arrow["{p_1}"', from=1-1, to=2-1]
            \arrow["f", from=1-2, to=2-2]
            \arrow["f"', from=2-1, to=2-2]
        \end{tikzcd}
    \end{displaymath}
    Denote by $\eta$ the unit of $\mathbf{L}\Delta_f^\ast \dashv \mathbf{R}(\Delta_f)_\ast$.
    By \cite[\href{https://stacks.math.columbia.edu/tag/073H}{Tag 073H}]{StacksProject}, the canonical morphism $(\Delta_f)_\ast \mathcal{O}_Y \to \mathbf{R} (\Delta_f)_\ast \mathcal{O}_Y$ is an isomorphism.
    Hence, the short exact sequence 
    \begin{displaymath}
        0 \to \mathcal{I}_{\Delta_f} \to \mathcal{O}_{Y\times_S Y} \to (\Delta_f)_\ast \mathcal{O}_Y \to 0
    \end{displaymath}
    yields a morphism of distinguished triangles
    \begin{displaymath}
        \begin{tikzcd}
            {\mathcal{I}_{\Delta_f}} & {\mathcal{O}_{Y\times_S Y}} & {(\Delta_f)_\ast \mathcal{O}_Y} & {\mathcal{I}_{\Delta_f}[1]} \\
            {\operatorname{cone}(\eta_{\mathcal{O}_{Y\times_S Y}})[-1]} & {\mathcal{O}_{Y\times_S Y}} & {\mathbf{R} (\Delta_f)_\ast \mathcal{O}_Y} & {\operatorname{cone}(\eta_{\mathcal{O}_{Y\times_S Y}})}
            \arrow[from=1-1, to=1-2]
            \arrow[from=1-1, to=2-1]
            \arrow[from=1-2, to=1-3]
            \arrow["{=}", from=1-2, to=2-2]
            \arrow["\delta", from=1-3, to=1-4]
            \arrow["\cong", from=1-3, to=2-3]
            \arrow[from=1-4, to=2-4]
            \arrow[from=2-1, to=2-2]
            \arrow["{\eta_{\mathcal{O}_{Y\times_S Y}}}", from=2-2, to=2-3]
            \arrow[from=2-3, to=2-4]
        \end{tikzcd}
    \end{displaymath}
    whose vertical morphisms are isomorphisms.
    In particular, $\mathcal{I}_{\Delta_f} \cong \operatorname{cone}(\eta_{\mathcal{O}_{Y\times_S Y}})[-1]$.
    By \cite[Lemma 5.32]{GuisadoVillaalgordo/Lank:2026}, $\mathcal{I}_{\Delta_f}$ is relatively perfect over each projection $p_i \colon Y\times_S Y \to Y$.
    Then \cite[Corollary 1.9]{GuisadoVillaalgordo/Lank:2026} implies $\Phi_{\mathcal{I}_{\Delta_f}}$ restricts to an exact endofunctor on $D^b_{\operatorname{coh}}(Y)$ which preserves $\operatorname{Perf}(Y)$.
    Denote by $\pi_Y \colon D^b_{\operatorname{coh}}(Y) \to D_{\operatorname{sg}}(Y)$ the Verdier localization functor.
    By \cite[\href{https://stacks.math.columbia.edu/tag/05RJ}{Tag 05RJ}]{StacksProject}, there exists an exact functor $\Phi_{\mathcal{I}_{\Delta_f}}^{\operatorname{sg}}$ making the following square commute:
    \begin{displaymath}
        \begin{tikzcd}
            {D^b_{\operatorname{coh}}(Y)} & {D^b_{\operatorname{coh}}(Y)} \\
            {D_{\operatorname{sg}}(Y)} & {D_{\operatorname{sg}}(Y).}
            \arrow["{\Phi_{\mathcal{I}_{\Delta_f}}}", from=1-1, to=1-2]
            \arrow["{\pi_Y}"', from=1-1, to=2-1]
            \arrow["{\pi_Y}", from=1-2, to=2-2]
            \arrow["{\Phi_{\mathcal{I}_{\Delta_f}}^{\operatorname{sg}}}"', from=2-1, to=2-2]
        \end{tikzcd}
    \end{displaymath}
    We do not assume $f$ is Gorenstein, so it is not clear a priori whether $\Phi_{\mathcal{I}_{\Delta_f}}^{\operatorname{sg}}$ admits an adjoint.
    Nevertheless, we will prove that there exists a natural isomorphism $\Phi_{\mathcal{I}_{\Delta_f}}^{\operatorname{sg}} \cong [-1]$.
    In such a case, $\Phi_{\mathcal{I}_{\Delta_f}}^{\operatorname{sg}}$ is an autoequivalence of $D_{\operatorname{sg}}(Y)$.

    Apply integral transforms to the top row of the diagram above.
    Use the projection formula to identify $\Phi_{(\Delta_f)_\ast \mathcal{O}_Y}$ with the identity.
    This yields a distinguished triangle which is natural in $A\in D^b_{\operatorname{coh}}(Y)$:
    \begin{displaymath}
        \Phi_{\mathcal{I}_{\Delta_f}}(A) \to \mathbf{R}(p_2)_\ast \mathbf{L}p_1^\ast A \to A \xrightarrow{h_A} \Phi_{\mathcal{I}_{\Delta_f}}(A)[1].
    \end{displaymath}
    Here $h$ is the natural transformation induced by $\delta \colon (\Delta_f)_\ast \mathcal{O}_Y \to \mathcal{I}_{\Delta_f}[1]$.
    By flat base change, there exists a natural isomorphism $\mathbf{R}(p_2)_\ast \mathbf{L}p_1^\ast A \cong \mathbf{L}f^\ast \mathbf{R}f_\ast A$.
    As $f$ is proper, $\mathbf{R}f_\ast A \in D^b_{\operatorname{coh}}(S)$.
    As $S$ is regular, \cite[Lemma 5.28]{GuisadoVillaalgordo/Lank:2026} implies $\mathbf{R}f_\ast A$ is perfect, and so $\mathbf{L}f^\ast \mathbf{R}f_\ast A$ is perfect \cite[\href{https://stacks.math.columbia.edu/tag/08H6}{Tag 08H6}]{StacksProject}.
    Therefore, $\operatorname{cone}(h_A) \cong \mathbf{L}f^\ast \mathbf{R}f_\ast A[1] \in \operatorname{Perf}(Y)$, and so $\pi_Y(h_A)$ is an isomorphism for every $A\in D^b_{\operatorname{coh}}(Y)$.

    We check that the $\pi_Y(h_A)$ are natural in $D_{\operatorname{sg}}(Y)$.
    Write $\Psi := \Phi_{\mathcal{I}_{\Delta_f}}[1]$ and $\Psi^{\operatorname{sg}} := \Phi_{\mathcal{I}_{\Delta_f}}^{\operatorname{sg}}[1]$.
    Let $\alpha \colon \pi_Y(A^\prime) \to \pi_Y(A)$ be a morphism in $D_{\operatorname{sg}}(Y)$.
    Choose a roof which represents $\alpha$,
    \begin{displaymath}
        \begin{tikzcd}
            & {A_0} & \\
            {A^\prime} && A
            \arrow["{\alpha_1}"', from=1-2, to=2-1]
            \arrow["{\alpha_2}", from=1-2, to=2-3]
        \end{tikzcd}
    \end{displaymath}
    where $\operatorname{cone}(\alpha_1)\in \operatorname{Perf}(Y)$.
    Here $\alpha = \pi_Y(\alpha_2) \circ \pi_Y(\alpha_1)^{-1}$.
    As $\operatorname{cone}(\Psi(\alpha_1)) \cong \Psi(\operatorname{cone}(\alpha_1))$ is perfect, $\pi_Y(\Psi(\alpha_1))$ is an isomorphism.
    By the naturality of $h$ in $D^b_{\operatorname{coh}}(Y)$, $h_A \circ \alpha_2 = \Psi(\alpha_2) \circ h_{A_0}$ and $h_{A^\prime} \circ \alpha_1 = \Psi(\alpha_1) \circ h_{A_0}$.
    It follows that
    \begin{displaymath}
        \begin{aligned}
            \pi_Y (h_A) \circ \alpha
            &= \pi_Y(h_A \circ \alpha_2) \circ \pi_Y(\alpha_1)^{-1}
            \\&= \pi_Y(\Psi(\alpha_2)) \circ \pi_Y(h_{A_0}) \circ \pi_Y(\alpha_1)^{-1}
            \\&= \pi_Y(\Psi(\alpha_2)) \circ \pi_Y(\Psi(\alpha_1))^{-1} \circ \pi_Y(h_{A^\prime})
            \\&= \Psi^{\operatorname{sg}}(\alpha) \circ \pi_Y (h_{A^\prime}).
        \end{aligned}
    \end{displaymath}
    Hence, the morphisms $\pi_Y(h_A)$ give a natural isomorphism $1_{D_{\operatorname{sg}}(Y)} \to \Phi^{\operatorname{sg}}_{\mathcal{I}_{\Delta_f}}[1]$.
    That is, $\Phi^{\operatorname{sg}}_{\mathcal{I}_{\Delta_f}} \cong [-1]$, and so $\Phi^{\operatorname{sg}}_{\mathcal{I}_{\Delta_f}}$ is an autoequivalence of $D_{\operatorname{sg}}(Y)$ with quasi-inverse $[1]$.
\end{proof}

\begin{lemma}
    \label{lem:diagonal_ideal_kills_pullbacks}
    Let $f\colon Y \to S$ be a proper flat morphism of Noetherian algebraic spaces.
    Denote by $\mathcal{I}_{\Delta_f}$ the ideal sheaf of the diagonal $\Delta_f \colon Y \to Y\times_S Y$.
    Set $\epsilon$ to be the counit of $\mathbf{L}f^\ast \dashv \mathbf{R}f_\ast$.
    \begin{enumerate}
        \item \label{lem:diagonal_ideal_kills_pullbacks1} For every $B\in D_{\operatorname{qc}}(Y)$, there exists a distinguished triangle
        \begin{displaymath}
            \Phi_{\mathcal{I}_{\Delta_f}}(B) \to \mathbf{L}f^\ast \mathbf{R}f_\ast B \xrightarrow{\epsilon_B} B \to \Phi_{\mathcal{I}_{\Delta_f}}(B)[1].
        \end{displaymath}
        \item \label{lem:diagonal_ideal_kills_pullbacks2} If the canonical morphism $\mathcal{O}_S \to \mathbf{R}f_\ast \mathcal{O}_Y$ is an isomorphism, then for all $A\in D_{\operatorname{qc}}(S)$, we have $\Phi_{\mathcal{I}_{\Delta_f}}(\mathbf{L}f^\ast A) \cong 0$.
    \end{enumerate}
\end{lemma}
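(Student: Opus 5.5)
For part \eqref{lem:diagonal_ideal_kills_pullbacks1}, I will reuse the argument from the proof of \Cref{prop:ideal_sheaf_diagonal}. That argument did not use regularity of $S$ until the final perfectness step. Since $\Delta_f$ is a closed immersion, the short exact sequence $0 \to \mathcal{I}_{\Delta_f} \to \mathcal{O}_{Y\times_S Y} \to (\Delta_f)_\ast \mathcal{O}_Y \to 0$ gives a distinguished triangle of kernels in $D^b_{\operatorname{coh}}(Y\times_S Y)$. Because $B \mapsto \mathbf{R}(p_2)_\ast(\mathbf{L}p_1^\ast B \otimes^{\mathbf{L}} -)$ is exact in the kernel, applying it yields a distinguished triangle
\begin{displaymath}
    \Phi_{\mathcal{I}_{\Delta_f}}(B) \to \Phi_{\mathcal{O}_{Y\times_S Y}}(B) \to \Phi_{(\Delta_f)_\ast\mathcal{O}_Y}(B) \to \Phi_{\mathcal{I}_{\Delta_f}}(B)[1].
\end{displaymath}
I then identify the middle and right terms. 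For the right term, the projection formula and pseudofunctoriality, as in the proof of \Cref{prop:compare_dsg_for_smooth} applied to $g = 1_Y$, give $\Phi_{(\Delta_f)_\ast \mathcal{O}_Y}(B) \cong B$. For the middle term, $\Phi_{\mathcal{O}}(B) = \mathbf{R}(p_2)_\ast \mathbf{L}p_1^\ast B \cong \mathbf{L}f^\ast \mathbf{R}f_\ast B$ by flat (tor-independent) base change \cite[\href{https://stacks.math.columbia.edu/tag/08IR}{Tag 08IR}]{StacksProject}, which applies since $f$ is flat.

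The main obstacle is checking that, under these identifications, the middle map is the counit $\epsilon_B$. I will argue as follows. The map $\mathcal{O} \to (\Delta_f)_\ast\mathcal{O}_Y$ is the unit of $\mathbf{L}\Delta_f^\ast \dashv \mathbf{R}(\Delta_f)_\ast$ at $\mathcal{O}$, so after the identifications it is induced by the composite $\mathbf{R}(p_2)_\ast\mathbf{L}p_1^\ast B \to \mathbf{R}(p_2)_\ast \mathbf{R}(\Delta_f)_\ast \mathbf{L}\Delta_f^\ast \mathbf{L}p_1^\ast B \cong B$. Next I will use that the base change isomorphism $\beta\colon \mathbf{L}f^\ast \mathbf{R}f_\ast \to \mathbf{R}(p_2)_\ast \mathbf{L}p_1^\ast$ is the mate of the pseudofunctoriality isomorphism. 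Composing $\beta$ with the map above is then formally the counit of $\mathbf{L}f^\ast \dashv \mathbf{R}f_\ast$, by the mate calculus of \cite[Appendix D]{GuisadoVillaalgordo/Lank:2026}, using $p_2 \circ \Delta_f = p_1\circ \Delta_f = 1_Y$. This is a diagram chase; I would present it via a commutative square of units and counits rather than computing it explicitly.

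For part \eqref{lem:diagonal_ideal_kills_pullbacks2}, take $B = \mathbf{L}f^\ast A$. It suffices to show that $\epsilon_{\mathbf{L}f^\ast A}$ is an isomorphism. By the triangle identity, $\epsilon_{\mathbf{L}f^\ast A} \circ \mathbf{L}f^\ast(\eta_A) = 1$, where $\eta$ is the unit of the adjunction, so it is enough to show that $\eta_A \colon A \to \mathbf{R}f_\ast\mathbf{L}f^\ast A$ is an isomorphism. By the projection formula \cite[\href{https://stacks.math.columbia.edu/tag/08EU}{Tag 08EU}]{StacksProject}, $\mathbf{R}f_\ast \mathbf{L}f^\ast A \cong A \otimes^{\mathbf{L}} \mathbf{R}f_\ast\mathcal{O}_Y$, and under this isomorphism $\eta_A$ corresponds to $A \otimes^{\mathbf{L}}(\mathcal{O}_S \to \mathbf{R}f_\ast\mathcal{O}_Y)$. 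That map is an isomorphism by hypothesis, so the cone of $\epsilon_{\mathbf{L}f^\ast A}$ vanishes and part \eqref{lem:diagonal_ideal_kills_pullbacks1} gives $\Phi_{\mathcal{I}_{\Delta_f}}(\mathbf{L}f^\ast A)\cong 0$.
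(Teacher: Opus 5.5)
Your proposal is correct and follows essentially the paper's proof. It uses the same short exact sequence, identifies the middle map with $\epsilon_B$ in the same way (pasting the base change morphisms of the fibered square and of the square using $p_2\circ\Delta_f = 1_Y$ via mate calculus), and gives the same triangle-identity argument for part (2). The one step you assert rather than prove is that the projection formula isomorphism carries $\eta_{\mathcal{O}}\otimes 1_M$ to $\eta_M$, which you need for $g=\Delta_f$ in part (1) and for $g=f$ in part (2); the paper records exactly this as a preliminary observation and proves it by passing to adjoints and using the triangle identity.
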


\begin{proof}
    We first record an observation. 
    Let $g\colon X^\prime \to X$ be a quasi-compact quasi-separated morphism of algebraic spaces.
    Denote by $\eta^g$ and $\epsilon^g$ the unit and counit of $\mathbf{L}g^\ast \dashv \mathbf{R}g_\ast$.
    For $M\in D_{\operatorname{qc}}(X)$, the projection formula \cite[\href{https://stacks.math.columbia.edu/tag/08IN}{Tag 08IN}]{StacksProject} gives an isomorphism
    \begin{displaymath}
        \pi_M \colon \mathbf{R}g_\ast \mathcal{O}_{X^\prime} \otimes^{\mathbf{L}} M \to \mathbf{R}g_\ast (\mathcal{O}_{X^\prime} \otimes^{\mathbf{L}} \mathbf{L}g^\ast M) = \mathbf{R}g_\ast \mathbf{L}g^\ast M.
    \end{displaymath}
    By construction (see loc.\ cit.), $\pi_M$ is adjoint to the composite
    \begin{displaymath}
        \begin{aligned}
            \mathbf{L}g^\ast (\mathbf{R}g_\ast \mathcal{O}_{X^\prime} \otimes^{\mathbf{L}} M)
            &\xrightarrow{\mu} \mathbf{L}g^\ast \mathbf{R}g_\ast \mathcal{O}_{X^\prime} \otimes^{\mathbf{L}} \mathbf{L}g^\ast M
            \\&\xrightarrow{\epsilon^g_{\mathcal{O}_{X^\prime}} \otimes 1} \mathcal{O}_{X^\prime} \otimes^{\mathbf{L}} \mathbf{L}g^\ast M.
        \end{aligned}
    \end{displaymath}
    Here $\mu$ is the monoidal structure of $\mathbf{L}g^\ast$ \cite[\href{https://stacks.math.columbia.edu/tag/07A4}{Tag 07A4}]{StacksProject}.
    After making the identifications $\mathcal{O}_X \otimes^{\mathbf{L}} M = M$ and $\mathbf{L}g^\ast \mathcal{O}_X = \mathcal{O}_{X^\prime}$, we claim that
    \begin{equation}
        \label{eq:projection_formula_unit}
        \pi_M \circ (\eta^g_{\mathcal{O}_X} \otimes 1_M) = \eta^g_M.
    \end{equation}
    Indeed, the adjoint of $\eta^g_M$ is $1_{\mathbf{L}g^\ast M}$.
    By the naturality of $\mu$ and the triangle identity $\epsilon^g_{\mathbf{L}g^\ast \mathcal{O}_X} \circ \mathbf{L}g^\ast (\eta^g_{\mathcal{O}_X}) = 1$, the adjoint of the left-hand side is
    \begin{displaymath}
        \begin{aligned}
            (\epsilon^g_{\mathcal{O}_{X^\prime}} \otimes 1) \circ \mu \circ \mathbf{L}g^\ast (\eta^g_{\mathcal{O}_X} \otimes 1_M)
            &= ((\epsilon^g_{\mathcal{O}_{X^\prime}} \circ \mathbf{L}g^\ast (\eta^g_{\mathcal{O}_X})) \otimes 1) \circ \mu
            \\&= \mu.
        \end{aligned}
    \end{displaymath}
    Since $\mu$ is compatible with the units of the monoidal structures, it is the identity of $\mathbf{L}g^\ast M$ under the identifications above.
    This proves $\eqref{eq:projection_formula_unit}$.

    Now we prove \eqref{lem:diagonal_ideal_kills_pullbacks1}.
    Denote by $p_1,p_2 \colon Y\times_S Y \to Y$ the canonical projections.
    As $f$ is proper, it is separated, and so $\Delta_f$ is a closed immersion \cite[\href{https://stacks.math.columbia.edu/tag/03HL}{Tag 03HL}]{StacksProject}.
    By \cite[\href{https://stacks.math.columbia.edu/tag/073H}{Tag 073H}]{StacksProject}, the canonical morphism $(\Delta_f)_\ast \mathcal{O}_Y \to \mathbf{R}(\Delta_f)_\ast \mathcal{O}_Y$ is an isomorphism.
    Denote by $\eta^{\Delta_f}$ the unit of $\mathbf{L}\Delta_f^\ast \dashv \mathbf{R}(\Delta_f)_\ast$.
    Under the identification above, the canonical surjection $\mathcal{O}_{Y\times_S Y} \to (\Delta_f)_\ast \mathcal{O}_Y$ has kernel $\mathcal{I}_{\Delta_f}$, and it is the unit
    \begin{displaymath}
        \eta^{\Delta_f}_{\mathcal{O}_{Y\times_S Y}} \colon \mathcal{O}_{Y\times_S Y} \to \mathbf{R}(\Delta_f)_\ast \mathbf{L}\Delta_f^\ast \mathcal{O}_{Y\times_S Y} = \mathbf{R}(\Delta_f)_\ast \mathcal{O}_Y.
    \end{displaymath}
    Indeed, both sides are concentrated in degree zero, and on zeroth cohomology sheaves the unit is $\Delta_f^\sharp$.
    Fix $B\in D_{\operatorname{qc}}(Y)$.
    Applying $\mathbf{R}(p_2)_\ast (\mathbf{L}p_1^\ast B \otimes^{\mathbf{L}} -)$ to the short exact sequence 
    \begin{displaymath}
        0 \to \mathcal{I}_{\Delta_f} \to \mathcal{O}_{Y\times_S Y} \to (\Delta_f)_\ast \mathcal{O}_Y \to 0
    \end{displaymath}
    yields a distinguished triangle
    \begin{displaymath}
        \begin{aligned}
            \Phi_{\mathcal{I}_{\Delta_f}}(B)
            &\to \mathbf{R}(p_2)_\ast \mathbf{L}p_1^\ast B
            \\&\to \mathbf{R}(p_2)_\ast (\mathbf{L}p_1^\ast B \otimes^{\mathbf{L}} \mathbf{R}(\Delta_f)_\ast \mathcal{O}_Y)
            \to \Phi_{\mathcal{I}_{\Delta_f}}(B)[1],
        \end{aligned}
    \end{displaymath}
    whose middle morphism is $\mathbf{R}(p_2)_\ast (1 \otimes \eta^{\Delta_f}_{\mathcal{O}_{Y\times_S Y}})$.
    By $\eqref{eq:projection_formula_unit}$ for $g = \Delta_f$ and $M = \mathbf{L}p_1^\ast B$ (and the symmetry of $\otimes^{\mathbf{L}}$), the isomorphism $\pi_{\mathbf{L}p_1^\ast B}$ identifies $1 \otimes \eta^{\Delta_f}_{\mathcal{O}_{Y\times_S Y}}$ with the unit $\eta^{\Delta_f}_{\mathbf{L}p_1^\ast B}$.
    By the pseudofunctoriality isomorphisms $\mathbf{R}(p_2)_\ast \mathbf{R}(\Delta_f)_\ast \cong 1$ and $\mathbf{L}\Delta_f^\ast \mathbf{L}p_1^\ast \cong 1$, the middle morphism of the triangle is identified with the composite
    \begin{equation}
        \label{eq:diagonal_middle_morphism}
        \mathbf{R}(p_2)_\ast \mathbf{L}p_1^\ast B
        \xrightarrow{\mathbf{R}(p_2)_\ast (\eta^{\Delta_f}_{\mathbf{L}p_1^\ast B})} \mathbf{R}(p_2)_\ast \mathbf{R}(\Delta_f)_\ast \mathbf{L}\Delta_f^\ast \mathbf{L}p_1^\ast B
        \cong B.
    \end{equation}

    Next, consider the commutative diagram
    \begin{displaymath}
        \begin{tikzcd}
            Y & {Y\times_S Y} & Y \\
            Y & Y & {S.}
            \arrow["{\Delta_f}", from=1-1, to=1-2]
            \arrow["{1_Y}"', from=1-1, to=2-1]
            \arrow["{p_1}", from=1-2, to=1-3]
            \arrow["{p_2}", from=1-2, to=2-2]
            \arrow["f", from=1-3, to=2-3]
            \arrow["{1_Y}"', from=2-1, to=2-2]
            \arrow["f"', from=2-2, to=2-3]
        \end{tikzcd}
    \end{displaymath}
    Here the right square is fibered, and the left square commutes because $p_2 \circ \Delta_f = 1_Y$.
    As $p_1 \circ \Delta_f = 1_Y$, its outer rectangle is the commutative square whose top and left sides are $1_Y$, and whose right side and bottom are $f$.
    Denote by $\vartheta^r \colon \mathbf{L}f^\ast \mathbf{R}f_\ast \to \mathbf{R}(p_2)_\ast \mathbf{L}p_1^\ast$, $\vartheta^l \colon \mathbf{R}(p_2)_\ast \to \mathbf{L}\Delta_f^\ast$, $\vartheta \colon \mathbf{L}f^\ast \mathbf{R}f_\ast \to 1$ respectively the ${}^\ast$-${}_\ast$ base change morphisms \cite[Definition D.9]{GuisadoVillaalgordo/Lank:2026} of the right square, the left square, and the outer rectangle.
    Flatness of $f$ implies that the right square is tor-independent, and so $\vartheta^r$ is an isomorphism \cite[\href{https://stacks.math.columbia.edu/tag/08IR}{Tag 08IR}]{StacksProject}.
    By \cite[Definition D.9(1)]{GuisadoVillaalgordo/Lank:2026}, each of these is the left mate of the corresponding pseudofunctoriality isomorphism for derived pushforward.
    We make these left mates explicit using \cite[Lemma D.2]{GuisadoVillaalgordo/Lank:2026}.
    For the left square, the adjunction along the bottom side is the identity adjunction of $D_{\operatorname{qc}}(Y)$.
    Hence, for $W\in D_{\operatorname{qc}}(Y\times_S Y)$, the morphism $\vartheta^l_W$ is the composite
    \begin{displaymath}
        \mathbf{R}(p_2)_\ast W \xrightarrow{\mathbf{R}(p_2)_\ast (\eta^{\Delta_f}_W)} \mathbf{R}(p_2)_\ast \mathbf{R}(\Delta_f)_\ast \mathbf{L}\Delta_f^\ast W \cong \mathbf{L}\Delta_f^\ast W,
    \end{displaymath}
    where the last morphism is induced by the pseudofunctoriality isomorphism
    \begin{displaymath}
        \mathbf{R}(p_2)_\ast \mathbf{R}(\Delta_f)_\ast \cong \mathbf{R}(p_2 \circ \Delta_f)_\ast = 1.
    \end{displaymath}
    For the outer rectangle, the adjunction along the top side is the identity adjunction of $D_{\operatorname{qc}}(Y)$, and the pseudofunctoriality isomorphism is the identity of $\mathbf{R}f_\ast$.
    Hence, $\vartheta = \epsilon$; compare \cite[Remark D.5(1)]{GuisadoVillaalgordo/Lank:2026}.
    By \cite[Lemma D.7]{GuisadoVillaalgordo/Lank:2026}, the mate correspondence is compatible with horizontal pasting.
    Since pseudofunctoriality is associative, the pasted pseudofunctoriality isomorphisms of the two squares give that of the outer rectangle.
    Therefore, under the canonical pseudofunctorial identifications, $\vartheta = (\vartheta^l \mathbf{L}p_1^\ast) \circ \vartheta^r$.
    This is the argument of \cite[Lemma D.12(1)]{GuisadoVillaalgordo/Lank:2026}, which does not use that the squares are fibered.
    By evaluating at $B$, and using the pseudofunctoriality isomorphism $\mathbf{L}\Delta_f^\ast \mathbf{L}p_1^\ast \cong \mathbf{L}(p_1 \circ \Delta_f)^\ast = 1$, it follows that $\epsilon_B$ is the composite of $\vartheta^r_B$ with the morphism $\eqref{eq:diagonal_middle_morphism}$.
    As $\vartheta^r_B$ is an isomorphism, the distinguished triangle above is isomorphic to the triangle
    \begin{displaymath}
        \Phi_{\mathcal{I}_{\Delta_f}}(B) \to \mathbf{L}f^\ast \mathbf{R}f_\ast B \xrightarrow{\epsilon_B} B \to \Phi_{\mathcal{I}_{\Delta_f}}(B)[1],
    \end{displaymath}
    and hence, is distinguished.
    This proves \eqref{lem:diagonal_ideal_kills_pullbacks1}.

    We prove \eqref{lem:diagonal_ideal_kills_pullbacks2}.
    Let $A\in D_{\operatorname{qc}}(S)$.
    Denote by $\eta^f$ the unit of $\mathbf{L}f^\ast \dashv \mathbf{R}f_\ast$.
    The canonical morphism $\mathcal{O}_S \to \mathbf{R}f_\ast \mathcal{O}_Y = \mathbf{R}f_\ast \mathbf{L}f^\ast \mathcal{O}_S$ is $\eta^f_{\mathcal{O}_S}$, which is an isomorphism by hypothesis.
    By $\eqref{eq:projection_formula_unit}$ for $g = f$ and $M = A$, we have $\eta^f_A = \pi_A \circ (\eta^f_{\mathcal{O}_S} \otimes 1_A)$.
    Hence, $\eta^f_A$ is an isomorphism.
    By the triangle identity $\epsilon_{\mathbf{L}f^\ast A} \circ \mathbf{L}f^\ast (\eta^f_A) = 1$, the counit $\epsilon_{\mathbf{L}f^\ast A}$ is an isomorphism.
    Hence, \eqref{lem:diagonal_ideal_kills_pullbacks1} with $B = \mathbf{L}f^\ast A$ gives $\Phi_{\mathcal{I}_{\Delta_f}}(\mathbf{L}f^\ast A) \cong 0$.
\end{proof}

\begin{proposition}
    \label{prop:explicit_loci_diagonal}
    Let $S$ be a quasi-compact quasi-separated quasi-excellent algebraic space.
    Suppose $f\colon Y \to S$ is a proper Gorenstein morphism of algebraic spaces such that the canonical morphism $\mathcal{O}_S \to \mathbf{R}f_\ast \mathcal{O}_Y$ is an isomorphism.
    Denote by $\mathcal{I}_{\Delta_f}$ the ideal sheaf of the diagonal $\Delta_f \colon Y \to Y\times_S Y$.
    Then $\mathcal{I}_{\Delta_f}$ satisfies \Cref{setup:relatively_perfect_adjoint_pair} with respect to $f$ and $f$.
    Moreover, 
    \begin{displaymath}
        \operatorname{fm}_{\operatorname{sg}}(\mathcal{I}_{\Delta_f}) = \operatorname{FM}_{\operatorname{sg}}(\mathcal{I}_{\Delta_f}) = \operatorname{reg}(S).
    \end{displaymath}
\end{proposition}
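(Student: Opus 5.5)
First I will check that $\mathcal{I}_{\Delta_f}$ satisfies \Cref{setup:relatively_perfect_adjoint_pair}. A Gorenstein morphism is flat, and the projections $p_i\colon Y\times_S Y\to Y$ are proper and Gorenstein by base change. By \cite[Lemma 5.32]{GuisadoVillaalgordo/Lank:2026}, $\mathcal{I}_{\Delta_f}$ is relatively perfect over each $Y$. For the right kernel $E_R=\operatorname{\mathbf{R}\mathcal{H}\!\mathit{om}}(\mathcal{I}_{\Delta_f},p_2^!\mathcal{O}_Y)$, I will use the triangle $\mathcal{I}_{\Delta_f}\to\mathcal{O}\to(\Delta_f)_\ast\mathcal{O}_Y$. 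It gives a triangle relating $E_R$, $p_2^!\mathcal{O}_Y$ (which is invertible by \cite[Lemma 5.20]{GuisadoVillaalgordo/Lank:2026}) and $\operatorname{\mathbf{R}\mathcal{H}\!\mathit{om}}((\Delta_f)_\ast\mathcal{O}_Y,p_2^!\mathcal{O}_Y)\cong(\Delta_f)_\ast\Delta_f^!p_2^!\mathcal{O}_Y=(\Delta_f)_\ast\mathcal{O}_Y$, by Grothendieck duality for the closed immersion $\Delta_f$. The kernel $(\Delta_f)_\ast\mathcal{O}_Y$ induces the identity, so it is relatively perfect. Invertible objects are relatively perfect by \Cref{ex:perfect_kernels}. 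Since relative perfectness is closed under cones, $E_R$ is relatively perfect. Alternatively, \Cref{lem:diagonal_ideal_kills_pullbacks}(1) shows that $\Phi_{\mathcal{I}_{\Delta_f}}$ is the fiber of $\epsilon$. Its right adjoint is then the cofiber-type functor built from $\eta\colon 1\to f^!\mathbf{R}f_\ast$-style data, namely $f^! f_\ast$ with $f^!=\mathbf{L}f^\ast(-)\otimes\omega_f$. That functor visibly preserves $D^b_{\operatorname{coh}}$ and $\operatorname{Perf}$, so \cite[Corollary 1.9]{GuisadoVillaalgordo/Lank:2026} applies. This makes \Cref{thm:openness_singular_fm} and \Cref{lem:singular_FM_locus_independence} available.

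Next I reduce to a local base. Formation of $\mathcal{I}_{\Delta_f}$ and of the hypothesis $\mathcal{O}_S\cong\mathbf{R}f_\ast\mathcal{O}_Y$ commutes with flat base change along $\operatorname{Spec}(\mathcal{O}_{U,u})\to S$. For $\mathcal{I}_{\Delta_f}$ this holds because the sequence $0\to\mathcal{I}\to\mathcal{O}\to\Delta_\ast\mathcal{O}\to0$ pulls back exactly. By \Cref{rmk:regular_locus_by_presentation}, $p\in\operatorname{reg}(S)$ if and only if $R=\mathcal{O}_{U,u}$ is regular. It therefore suffices to prove the following for $S=\operatorname{Spec}(R)$ with $R$ local and quasi-excellent: if $R$ is regular, $\Phi^{\operatorname{sg}}$ is an equivalence; if $R$ is not regular, $\Phi^{\operatorname{sg}}$ is not fully faithful. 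Since $\operatorname{FM}_{\operatorname{sg}}\subseteq\operatorname{fm}_{\operatorname{sg}}$, these two statements give both equalities.

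The regular case is \Cref{prop:ideal_sheaf_diagonal}, which gives $\Phi^{\operatorname{sg}}\cong[-1]$.

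For the singular case I argue by contradiction, using the object $\mathbf{L}f^\ast k$ with $k$ the residue field. Since $f$ is flat and $R$ is not regular, $k\notin\operatorname{Perf}(R)$. I claim that $\mathbf{L}f^\ast k\notin\operatorname{Perf}(Y)$. The hypothesis gives $\mathbf{R}f_\ast\mathbf{L}f^\ast k\cong k$ by the projection formula. Also $\mathbf{R}f_\ast$ preserves perfect objects by \Cref{lem:quasi-perfect_morphism}. So if $\mathbf{L}f^\ast k$ were perfect, $k$ would be perfect, which is false. Hence $\mathbf{L}f^\ast k$ is a nonzero object of $D_{\operatorname{sg}}(Y)$. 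By \Cref{lem:diagonal_ideal_kills_pullbacks}(2), $\Phi_{\mathcal{I}_{\Delta_f}}(\mathbf{L}f^\ast k)\cong0$, so $\Phi^{\operatorname{sg}}$ sends a nonzero object to zero. A fully faithful exact functor reflects zero objects, so $\Phi^{\operatorname{sg}}$ is not fully faithful. Hence $p\notin\operatorname{fm}_{\operatorname{sg}}$.

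The main obstacle I expect is the first step, verifying that $E_R$ is relatively perfect, specifically identifying $\operatorname{\mathbf{R}\mathcal{H}\!\mathit{om}}((\Delta_f)_\ast\mathcal{O}_Y,p_2^!\mathcal{O}_Y)$. I would first try the adjoint-functor description, $\Phi_{E_R}$ as the cofiber of a map $1\to f^!\mathbf{R}f_\ast$ (up to shift). This avoids computing $\Delta_f^!$ directly and uses only that $f^!\cong\mathbf{L}f^\ast(-)\otimes\omega_f$ with $\omega_f$ invertible.
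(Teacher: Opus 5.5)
Your proposal is correct and follows essentially the same route as the paper. The setup is verified by dualizing the triangle $\mathcal{I}_{\Delta_f}\to\mathcal{O}_{Y\times_S Y}\to(\Delta_f)_\ast\mathcal{O}_Y$ into $p_2^!\mathcal{O}_Y$, one reduces to the local ring of an \'{e}tale germ via \Cref{lem:singular_FM_locus_independence}, and the regular case is \Cref{prop:ideal_sheaf_diagonal}. In the singular case a non-perfect complex is pulled back and killed by \Cref{lem:diagonal_ideal_kills_pullbacks}; the paper takes an arbitrary such complex, while you take the residue field.

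The one difference is at the step you flagged as the obstacle. The paper identifies the transform with kernel $\operatorname{\mathbf{R}\mathcal{H}\!\mathit{om}}((\Delta_f)_\ast\mathcal{O}_Y,p_2^!\mathcal{O}_Y)$ with the identity by uniqueness of adjoints, since it is right adjoint to $\Phi_{(\Delta_f)_\ast\mathcal{O}_Y}\cong 1$. This sidesteps sheafified duality for $\Delta_f$ and the non-functoriality of cones in your adjoint-functor alternative.
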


\begin{proof}
    Denote by $p_1,p_2 \colon Y\times_S Y \to Y$ the canonical projections.
    We first check \Cref{setup:relatively_perfect_adjoint_pair}.
    As in the proof of \Cref{prop:ideal_sheaf_diagonal}, $\mathcal{I}_{\Delta_f}$ is a coherent sheaf which is relatively perfect over each factor \cite[Lemma 5.32]{GuisadoVillaalgordo/Lank:2026}.
    By \cite[Proposition 4.19 \& Corollary 4.20]{GuisadoVillaalgordo/Lank:2026},
    \begin{displaymath}
        \operatorname{\mathbf{R}\mathcal{H}\!\mathit{om}}(\mathcal{I}_{\Delta_f}, p_2^! \mathcal{O}_Y) \in D^b_{\operatorname{coh}}(Y\times_S Y).
    \end{displaymath}
    By \cite[Corollary 1.9]{GuisadoVillaalgordo/Lank:2026}, 
    it suffices to show that the integral transform from the second factor to the first with kernel $\operatorname{\mathbf{R}\mathcal{H}\!\mathit{om}}(\mathcal{I}_{\Delta_f}, p_2^! \mathcal{O}_Y)$ preserves $D^b_{\operatorname{coh}}(Y)$ and $\operatorname{Perf}(Y)$.
    Applying $\operatorname{\mathbf{R}\mathcal{H}\!\mathit{om}}(-, p_2^! \mathcal{O}_Y)$ to the distinguished triangle
    \begin{displaymath}
        \mathcal{I}_{\Delta_f}
        \to \mathcal{O}_{Y\times_S Y}
        \to (\Delta_f)_\ast \mathcal{O}_Y
        \to \mathcal{I}_{\Delta_f}[1]
    \end{displaymath}
    gives a distinguished triangle
    \begin{displaymath}
        \begin{aligned}
            \operatorname{\mathbf{R}\mathcal{H}\!\mathit{om}}((\Delta_f)_\ast \mathcal{O}_Y, p_2^! \mathcal{O}_Y)
            &\to \operatorname{\mathbf{R}\mathcal{H}\!\mathit{om}}(\mathcal{O}_{Y\times_S Y}, p_2^! \mathcal{O}_Y)
            \\&\to \operatorname{\mathbf{R}\mathcal{H}\!\mathit{om}}(\mathcal{I}_{\Delta_f}, p_2^! \mathcal{O}_Y)
            \to \operatorname{\mathbf{R}\mathcal{H}\!\mathit{om}}((\Delta_f)_\ast \mathcal{O}_Y, p_2^! \mathcal{O}_Y)[1].
        \end{aligned}
    \end{displaymath}
    For each $B\in D_{\operatorname{qc}}(Y)$, applying the exact functor $\mathbf{R}(p_1)_\ast (\mathbf{L}p_2^\ast B \otimes^{\mathbf{L}} -)$ yields a distinguished triangle whose terms are the values at $B$ of the integral transforms with these three kernels.
    By \Cref{ex:perfect_kernels} with $Q = \mathcal{O}_{Y\times_S Y}$, the middle kernel is relatively perfect over each factor.
    Hence, by \cite[Corollary 1.9]{GuisadoVillaalgordo/Lank:2026}, its integral transform preserves $D^b_{\operatorname{coh}}(Y)$ and $\operatorname{Perf}(Y)$.
    As in the proof of \Cref{prop:ideal_sheaf_diagonal}, $\Phi_{(\Delta_f)_\ast \mathcal{O}_Y}$ is naturally isomorphic to the identity.
    Moreover, by \cite[Corollary 1.9]{GuisadoVillaalgordo/Lank:2026}, $(\Delta_f)_\ast \mathcal{O}_Y$ is relatively perfect over each factor.
    By \cite[Proposition 4.19]{GuisadoVillaalgordo/Lank:2026}, the integral transform with kernel $\operatorname{\mathbf{R}\mathcal{H}\!\mathit{om}}((\Delta_f)_\ast \mathcal{O}_Y, p_2^! \mathcal{O}_Y)$ is right adjoint to $\Phi_{(\Delta_f)_\ast \mathcal{O}_Y}$.
    Hence, it is naturally isomorphic to the identity by the uniqueness of adjoints.
    Since $D^b_{\operatorname{coh}}(Y)$ and $\operatorname{Perf}(Y)$ are triangulated subcategories, the integral transform with kernel $\operatorname{\mathbf{R}\mathcal{H}\!\mathit{om}}(\mathcal{I}_{\Delta_f}, p_2^! \mathcal{O}_Y)$ preserves both.
    Thus, \Cref{setup:relatively_perfect_adjoint_pair} applies.

    Before proving the claim, we set some notation and state a few facts for points of $|S|$.
    Fix $p\in |S|$.
    Pick an \'{e}tale germ $(U,u)\to (S,p)$ from an affine scheme.
    For simplicity, set $T := \operatorname{Spec}(\mathcal{O}_{U,u})$.
    Denote by $f_T \colon Y_T \to T$ the base change of $f$, and by $t^\prime \colon Y_T \times_T Y_T \to Y\times_S Y$ the canonical morphism.
    As $T \to S$ is the composite of a localization and an \'{e}tale morphism, it is flat, and so is $t^\prime$.
    Pull back the short exact sequence along $t^\prime$,
    \begin{displaymath}
        0 \to \mathcal{I}_{\Delta_f} \to \mathcal{O}_{Y\times_S Y} \to (\Delta_f)_\ast \mathcal{O}_Y \to 0.
    \end{displaymath}
    As $\Delta_{f_T}$ is the base change of $\Delta_f$, flat base change \cite[\href{https://stacks.math.columbia.edu/tag/073K}{Tag 073K}]{StacksProject} gives $\mathbf{L}(t^\prime)^\ast \mathcal{I}_{\Delta_f} \cong \mathcal{I}_{\Delta_{f_T}}$.
    Likewise, the canonical morphism $\mathcal{O}_T \to \mathbf{R}(f_T)_\ast \mathcal{O}_{Y_T}$ is an isomorphism.
    By \Cref{lem:singular_FM_locus_independence}, it suffices to check whether $\Phi^{\operatorname{sg}}_{\mathcal{I}_{\Delta_{f_T}}}$ is fully faithful (resp.\ an equivalence) for points in the various loci.
    Recall that $p\in \operatorname{reg}(S)$ if, and only if, $T$ is regular \cite[\href{https://stacks.math.columbia.edu/tag/0AH9}{Tag 0AH9}]{StacksProject}.

    Assume $T$ is regular; that is, $p\in \operatorname{reg}(S)$.
    By \Cref{prop:ideal_sheaf_diagonal},
    $\Phi^{\operatorname{sg}}_{\mathcal{I}_{\Delta_{f_T}}}$ is naturally isomorphic to $[-1]$, and hence is an equivalence.
    Thus, $p\in \operatorname{FM}_{\operatorname{sg}}(\mathcal{I}_{\Delta_f}) \subseteq \operatorname{fm}_{\operatorname{sg}}(\mathcal{I}_{\Delta_f})$, which gives the inclusions
    \begin{displaymath}
        \operatorname{fm}_{\operatorname{sg}}(\mathcal{I}_{\Delta_f})
        \supseteq \operatorname{FM}_{\operatorname{sg}}(\mathcal{I}_{\Delta_f}) \supseteq \operatorname{reg}(S).
    \end{displaymath}
    We check the reverse inclusions.

    Choose $p\in \operatorname{fm}_{\operatorname{sg}}(\mathcal{I}_{\Delta_f})$.
    Assume $p\not \in \operatorname{reg}(S)$, i.e.\ $T$ is not regular.
    By \cite[Lemma 5.28]{GuisadoVillaalgordo/Lank:2026}, there exists $A\in D^b_{\operatorname{coh}}(T)$ which is not perfect.
    As $f_T$ is flat, $\mathbf{L}f_T^\ast A \in D^b_{\operatorname{coh}}(Y_T)$, and the projection formula gives
    \begin{displaymath}
        \mathbf{R}(f_T)_\ast \mathbf{L}f_T^\ast A \cong A \otimes^{\mathbf{L}} \mathbf{R}(f_T)_\ast \mathcal{O}_{Y_T} \cong A.
    \end{displaymath}
    As $f_T$ is proper and flat, \Cref{lem:quasi-perfect_morphism} implies $\mathbf{R}(f_T)_\ast$ preserves perfect complexes.
    So $\mathbf{L}f_T^\ast A$ is not perfect, i.e.\ it is a nonzero object of $D_{\operatorname{sg}}(Y_T)$.
    However, \Cref{lem:diagonal_ideal_kills_pullbacks}\eqref{lem:diagonal_ideal_kills_pullbacks2} gives $\Phi_{\mathcal{I}_{\Delta_{f_T}}}(\mathbf{L}f_T^\ast A) \cong 0$.
    Hence, $\Phi^{\operatorname{sg}}_{\mathcal{I}_{\Delta_{f_T}}}$ sends the identity morphism of a nonzero object of $D_{\operatorname{sg}}(Y_T)$ to zero, and so it is not faithful.
    Thus, $p\notin \operatorname{fm}_{\operatorname{sg}}(\mathcal{I}_{\Delta_f})$, which is a contradiction.
\end{proof}

\begin{proof}
    [Proof of \Cref{cor:autoequivalence_detects_regularity}]
    If $S$ is regular, then take $f=1_S$.
    We prove the converse. 
    Pick $f\colon Y \to S$ as in \Cref{introprop:explicit_loci_diagonal} such that $\Phi^{\operatorname{sg}}_{\mathcal{I}_{\Delta_f}}$ is an autoequivalence of $D_{\operatorname{sg}}(Y)$.
    By \Cref{prop:explicit_loci_diagonal}, $\mathcal{I}_{\Delta_f}$ with $f$ and $f$ satisfies \Cref{setup:relatively_perfect_adjoint_pair}, and $\operatorname{FM}_{\operatorname{sg}}(\mathcal{I}_{\Delta_f}) = \operatorname{reg}(S)$.
    By \Cref{prop:germwise_criteria}, $\operatorname{FM}_{\operatorname{sg}}(\mathcal{I}_{\Delta_f}) = |S|$.
    Consequently, $\operatorname{reg}(S) = |S|$, i.e.\ $S$ is regular.
\end{proof}

\section{Arithmetic families}
\label{sec:arithmetic_families}

\begin{lemma}
    \label{lem:germ_regular_locus}
    Let $f\colon Y\to S$ be a proper morphism of Noetherian algebraic spaces.
    Fix $p\in|S|$.
    Choose an \'{e}tale germ $s\colon (U,u)\to(S,p)$.
    Then $Y\times_S\operatorname{Spec}(\mathcal{O}_{U,u})$ is regular if, and only if, $Y$ is regular at every point of $|f|^{-1}(p)$.
    In particular, if $\operatorname{reg}(Y)$ is open, then the collection of $p\in|S|$ for which $Y\times_S\operatorname{Spec}(\mathcal{O}_{U,u})$ is regular for some (equivalently, every) \'{e}tale germ $(U,u)\to(S,p)$ is the open subset $|S|\setminus|f|(|Y|\setminus\operatorname{reg}(Y))$.
\end{lemma}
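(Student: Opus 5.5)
Set $T:=\operatorname{Spec}(\mathcal{O}_{U,u})$, $Y_T:=Y\times_S T$, and write $g\colon Y_T\to Y$ for the projection. The plan is to compare regularity on $Y_T$ with regularity on $Y$ pointwise along $g$, and then to use properness to reduce the check on $Y_T$ to points lying over the closed point of $T$.

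First, $g$ is the composite of the base change of the étale morphism $U\to S$ with the base change of the localization $T\to U$. By \Cref{rmk:regular_locus_by_presentation}, or directly \cite[\href{https://stacks.math.columbia.edu/tag/0AH9}{Tag 0AH9}]{StacksProject}, regularity at a point is preserved and reflected along étale morphisms. Along the localization $Y\times_S T\to Y\times_S U$, the local rings at points of the source coincide with those at their images, since it is a limit of open immersions. Hence $Y_T$ is regular at $y\in|Y_T|$ if and only if $Y$ is regular at $|g|(y)$.

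Next I would show that $|g|$ maps the fiber of $Y_T\to T$ over the closed point $u$ onto $|f|^{-1}(p)$. This follows from the fiber square argument already used in the proof of \Cref{thm:openness_singular_fm}: a representative of any $q\in|f|^{-1}(p)$ and the point $\operatorname{Spec}(\kappa(u))\to S$ have a common field refinement, which gives a point of $Y_T$ over $u$ mapping to $q$. The converse inclusion is immediate, because $u$ lies over $p$.

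Now the claim follows. If $Y_T$ is regular, then $Y$ is regular along the image of the closed fiber, which is all of $|f|^{-1}(p)$. Conversely, suppose $Y$ is regular along $|f|^{-1}(p)$. The regular locus is closed under generalization, since localizations of regular local rings are regular. Every point of $|Y_T|$ specializes to a closed point, by \cite[Remark 3.11]{GuisadoVillaalgordo/Lank:2026}. Since $Y_T\to T$ is proper, closed points map to the closed point $u$, as in \Cref{lem:perfect_over_local_ring_iff_perfect_over_closed_fiber}. So every point of $Y_T$ generalizes a point of the closed fiber, at which $Y_T$ is regular by the first two paragraphs, and therefore $Y_T$ is regular.

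For the final statement, the first part shows the condition depends only on $p$ and not on the germ. It holds exactly when $|f|^{-1}(p)\subseteq\operatorname{reg}(Y)$, that is, when $p\notin|f|(|Y|\setminus\operatorname{reg}(Y))$. If $\operatorname{reg}(Y)$ is open, this set is the image of a closed set under the closed map $|f|$, so it is closed. The main obstacle is the surjectivity of the closed fiber onto $|f|^{-1}(p)$ for algebraic spaces, which needs care with field representatives of points.
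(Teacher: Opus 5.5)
Your proposal is correct and is essentially the paper's argument. Both proofs compare regularity pointwise along the \'{e}tale map and the localization; the paper makes the localization step precise via an affine \'{e}tale presentation $V=\operatorname{Spec}(B)$ of $Y\times_S U$, so that $V\times_U T\to V$ is $\operatorname{Spec}(B\otimes_A A_{\mathfrak{q}})\to\operatorname{Spec}(B)$. Both then use that the regular locus is closed under generalization, that properness forces closed points into the closed fiber, and that the closed fiber surjects onto $|f|^{-1}(p)$ via \cite[\href{https://stacks.math.columbia.edu/tag/03H4}{Tag 03H4}]{StacksProject}.

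The one step you state too quickly is closure under generalization on an algebraic space. This needs generalizations to lift along an \'{e}tale presentation, for which the paper cites \cite[\href{https://stacks.math.columbia.edu/tag/0B7W}{Tag 0B7W}]{StacksProject}, but it is routine.
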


\begin{proof}
    After choosing an affine open neighborhood of $u$ in $U$, we can impose that $U=\operatorname{Spec}(A)$ is affine (this does not change $\mathcal{O}_{U,u}$).
    Let $\mathfrak{q}\subseteq A$ be the prime ideal corresponding to $u$.
    For simplicity, set $T:=\operatorname{Spec}(A_{\mathfrak{q}})$. 
    Denote by $t_0$ its closed point.
    Form the commutative diagram with fibered squares
    \begin{displaymath}
        \begin{tikzcd}
            {Y\times_S T} & {Y\times_S U} & Y \\
            T & U & {S.}
            \arrow["h", from=1-1, to=1-2]
            \arrow["{f_T}"', from=1-1, to=2-1]
            \arrow["{s^\prime}", from=1-2, to=1-3]
            \arrow[from=1-2, to=2-2]
            \arrow["f", from=1-3, to=2-3]
            \arrow[from=2-1, to=2-2]
            \arrow["s"', from=2-2, to=2-3]
        \end{tikzcd}
    \end{displaymath}

    We first claim that $Y\times_S T$ is regular at $y\in|Y\times_S T|$ if, and only if, $Y\times_S U$ is regular at $|h|(y)$.
    Since $f$ is quasi-compact and $U$ is affine, $Y\times_S U$ is quasi-compact.
    Choose an \'{e}tale presentation $V\to Y\times_S U$ from an affine scheme $V=\operatorname{Spec}(B)$.
    Then $V\times_U T\to Y\times_S T$ is an \'{e}tale presentation.
    Also, the projection $V\times_U T\to V$ is $\operatorname{Spec}(B\otimes_A A_{\mathfrak{q}})\to\operatorname{Spec}(B)$.
    As $B\otimes_A A_{\mathfrak{q}}$ is a localization of $B$, this projection induces isomorphisms on local rings.
    Hence, by \Cref{rmk:regular_locus_by_presentation}, the claim follows.

    Note that the regular locus of $Y\times_S T$ is closed under generalization.
    Indeed, suppose $Y\times_S T$ is regular at $y$, and let $y^\prime$ be a generalization of $y$.
    Pick $v\in|V\times_U T|$ over $y$.
    By \cite[\href{https://stacks.math.columbia.edu/tag/0B7W}{Tag 0B7W}]{StacksProject}, there exists a generalization $v^\prime$ of $v$ lying over $y^\prime$.
    The local ring at $v^\prime$ is a localization of the regular local ring at $v$, and so it is regular \cite[Theorem 19.3]{Matsumura:1989}.
    Applying \Cref{rmk:regular_locus_by_presentation}, $Y\times_S T$ is regular at $y^\prime$.

    Moreover, by \cite[Remark 3.11]{GuisadoVillaalgordo/Lank:2026}, every point of $|Y\times_S T|$ specializes to a closed point. 
    Since $f_T$ is proper, $|f_T|$ is closed, and so every closed point of $|Y\times_S T|$ lies over $t_0$.
    Consequently, $Y\times_S T$ is regular if, and only if, it is regular at every point of $|f_T|^{-1}(t_0)$.

    Lastly, by \cite[\href{https://stacks.math.columbia.edu/tag/03H4}{Tag 03H4}]{StacksProject}, $|h|$ maps $|f_T|^{-1}(t_0)$ onto the points of $|Y\times_S U|$ lying over $u$, and $|s^\prime|$ maps the latter onto $|f|^{-1}(p)$.
    Since $s^\prime$ is \'{e}tale, $Y\times_S U$ is regular at a point if, and only if, $Y$ is regular at its image.
    Indeed, compose an \'{e}tale presentation of $Y\times_S U$ with $s^\prime$, and apply \cite[\href{https://stacks.math.columbia.edu/tag/0AH9}{Tag 0AH9}]{StacksProject}.
    Combining these observations, the first claim follows.
    The last claim follows because $|f|$ is closed.
    This completes the proof.
\end{proof}

\begin{corollary}
    \label{cor:regular_total_spaces}
    Assume \Cref{setup:relatively_perfect_adjoint_pair}.
    Then $\operatorname{FM}_{\operatorname{sg}}(E)$ contains the open subset
    \begin{displaymath}
        |S|\setminus(|f_1|(|Y_1|\setminus\operatorname{reg}(Y_1))\cup|f_2|(|Y_2|\setminus\operatorname{reg}(Y_2))).
    \end{displaymath}
\end{corollary}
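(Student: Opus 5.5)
Set $V$ to be the displayed subset. It is open: by \Cref{rmk:quasi-excellet_is_j-2} each $Y_i$ is $J\textrm{-}2$ (the $f_i$ are of finite presentation), so $\operatorname{reg}(Y_i)$ is open, and $|f_i|$ is closed since $f_i$ is proper. It remains to show $V\subseteq \operatorname{FM}_{\operatorname{sg}}(E)$.

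Fix $p\in V$ and an \'{e}tale germ $(U,u)\to(S,p)$. Set $T:=\operatorname{Spec}(\mathcal{O}_{U,u})$ and $Y_{i,T}:=Y_i\times_S T$. Since $p\notin |f_i|(|Y_i|\setminus\operatorname{reg}(Y_i))$, the space $Y_i$ is regular at every point of $|f_i|^{-1}(p)$. By \Cref{lem:germ_regular_locus}, each $Y_{i,T}$ is regular. Then \cite[Lemma 5.28]{GuisadoVillaalgordo/Lank:2026} gives $D^b_{\operatorname{coh}}(Y_{i,T})=\operatorname{Perf}(Y_{i,T})$, so $D_{\operatorname{sg}}(Y_{1,T})$ and $D_{\operatorname{sg}}(Y_{2,T})$ are both trivial.

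The base-changed functor $\Phi^{\operatorname{sg}}_{E_{T}}$ is defined by the remark following \Cref{def:singular_FM_locus}. Any exact functor between zero triangulated categories is an equivalence. Hence $\Phi^{\operatorname{sg}}_{E_T}$ is an equivalence, and $p\in\operatorname{FM}_{\operatorname{sg}}(E)$ by \Cref{def:singular_FM_locus}.

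The only substantive input is the translation from ``regular along the fiber over $p$'' to ``the germwise base change is regular'', which is exactly \Cref{lem:germ_regular_locus}. No step should be a real obstacle. The one point to check is that the union of the two closed sets is taken correctly: both $Y_{1,T}$ and $Y_{2,T}$ must be regular, which is why both bad loci are removed.
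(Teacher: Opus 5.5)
Your proposal is correct and follows the paper's proof essentially step for step. You show the displayed set is open, apply \Cref{lem:germ_regular_locus} to make each $Y_i\times_S\operatorname{Spec}(\mathcal{O}_{U,u})$ regular, and conclude via \cite[Lemma 5.28]{GuisadoVillaalgordo/Lank:2026} that the induced functor is between trivial categories; the only cosmetic difference is that the paper gets openness of $\operatorname{reg}(Y_i)$ from the classical generators $G_i$ via \Cref{lem:existence_classical_generator_implies_open_cohomological_regular_locus}, whereas you get it directly from the $J\textrm{-}2$ property (which, since $Y_i$ is an algebraic space, implicitly goes through an \'{e}tale presentation and \Cref{rmk:regular_locus_by_presentation}).
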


\begin{proof}
    By \Cref{setup:relatively_perfect_adjoint_pair}, each $D^b_{\operatorname{coh}}(Y_i)$ admits a classical generator.
    Hence, \Cref{lem:existence_classical_generator_implies_open_cohomological_regular_locus} implies each $\operatorname{reg}(Y_i)$ is open.
    Since each $|f_i|$ is closed, 
    \begin{displaymath}
        |S|\setminus(|f_1|(|Y_1|\setminus\operatorname{reg}(Y_1))\cup|f_2|(|Y_2|\setminus\operatorname{reg}(Y_2)))
    \end{displaymath}
    is open.
    Choose $p$ in this subset.
    Fix an \'{e}tale germ $(U,u)\to(S,p)$.
    By \Cref{lem:germ_regular_locus}, each $Y_i\times_S\operatorname{Spec}(\mathcal{O}_{U,u})$ is regular.
    By \cite[Lemma 5.28]{GuisadoVillaalgordo/Lank:2026}, their singularity categories are trivial.
    Hence, the induced functor between them is an equivalence, and so $p\in\operatorname{FM}_{\operatorname{sg}}(E)$.
\end{proof}

\begin{corollary}
    \label{cor:dimension_one_base}
    Assume \Cref{setup:relatively_perfect_adjoint_pair} with $S$ an integral Noetherian scheme of Krull dimension one.
    Denote by $\xi$ the generic point of $S$.
    Set $E_\xi$ to be the derived pullback of $E$ to $Y_1\times_S Y_2 \times_S\operatorname{Spec}(\kappa(\xi))$.
    If $\Phi^{\operatorname{sg}}_{E_\xi}$ is fully faithful (resp.\ an equivalence), then $\operatorname{fm}_{\operatorname{sg}}(E)$ (resp.\ $\operatorname{FM}_{\operatorname{sg}}(E)$) contains all but finitely many closed points of $S$.
    Moreover, if $V\subseteq S$ is the corresponding open subscheme, then $\Phi^{\operatorname{sg}}_{E_V}$ is fully faithful (resp.\ an equivalence) where $E_V$ is the restriction of $E$ to $(Y_1\times_S Y_2)\times_S V$.
\end{corollary}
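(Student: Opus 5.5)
The plan is to show first that the generic point $\xi$ lies in $\operatorname{fm}_{\operatorname{sg}}(E)$ (resp.\ $\operatorname{FM}_{\operatorname{sg}}(E)$), and then combine \Cref{thm:openness_singular_fm} with the topology of a one-dimensional integral Noetherian scheme.

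Since $S$ is a scheme, the identity germ $(S,\xi)$ has $\mathcal{O}_{S,\xi}=\kappa(\xi)$, because $S$ is integral. Thus the base change of $\Phi_E$ along $\operatorname{Spec}(\mathcal{O}_{S,\xi})\to S$ is exactly $\Phi_{E_\xi}$. By hypothesis, $\Phi^{\operatorname{sg}}_{E_\xi}$ is fully faithful (resp.\ an equivalence), so \Cref{def:singular_FM_locus}, together with \Cref{lem:singular_FM_locus_independence}, gives $\xi\in\operatorname{fm}_{\operatorname{sg}}(E)$ (resp.\ $\operatorname{FM}_{\operatorname{sg}}(E)$).

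By \Cref{thm:openness_singular_fm}, the locus is open, so it is a nonempty open subset of the irreducible space $|S|$. Its complement $Z$ is a proper closed subset not containing $\xi$. Every irreducible component of $Z$ therefore has a generic point which is not $\xi$. Since $\dim S=1$, each such component is a closed point. A closed subset of a Noetherian space has finitely many irreducible components, so $Z$ is a finite set of closed points. This proves the first claim.

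For the second claim, let $V\subseteq S$ be the open subscheme with underlying set equal to the locus. The open immersion $j\colon V\to S$ is étale, so \Cref{prop:base_change} gives
\begin{displaymath}
    \operatorname{fm}_{\operatorname{sg}}(E_V)=|j|^{-1}(\operatorname{fm}_{\operatorname{sg}}(E))=|V|,
\end{displaymath}
and similarly for $\operatorname{FM}_{\operatorname{sg}}$. Here \Cref{prop:base_change} also supplies that \Cref{setup:relatively_perfect_adjoint_pair} holds over $V$, since $V$ is quasi-excellent, Noetherian and quasi-compact. Applying \Cref{prop:germwise_criteria} over $V$, the functor $\Phi^{\operatorname{sg}}_{E_V}$ is fully faithful (resp.\ an equivalence).

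The only points needing care are the identification $\mathcal{O}_{S,\xi}=\kappa(\xi)$, which uses that $S$ is integral and so reduced, and checking that the setup transfers to $V$. Both are routine.
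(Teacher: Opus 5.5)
Your proposal is correct and follows the paper's proof essentially step for step. You place $\xi$ in the locus via the identity germ using $\mathcal{O}_{S,\xi}=\kappa(\xi)$. You then combine \Cref{thm:openness_singular_fm} with the topology of an integral one-dimensional Noetherian scheme to see that the complement is a finite set of closed points. Finally, you get the claim over $V$ from \Cref{prop:base_change} followed by \Cref{prop:germwise_criteria}, where \Cref{prop:base_change} indeed also ensures that \Cref{setup:relatively_perfect_adjoint_pair} holds over $V$.
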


\begin{proof}
    We prove the case for full faithfulness.
    The case for equivalence is similar.
    The identity of $S$ together with $\xi$ is an \'{e}tale germ of $(S,\xi)$.
    Note that $\mathcal{O}_{S,\xi} = \kappa(\xi)$.
    By the hypothesis, $\xi\in\operatorname{fm}_{\operatorname{sg}}(E)$.
    By \Cref{thm:openness_singular_fm}, $\operatorname{fm}_{\operatorname{sg}}(E)$ is open.
    Its complement is a closed subset of $S$ not containing $\xi$.
    Moreover, each of its finitely many irreducible components is a proper irreducible closed subset of $|S|$.
    Since $S$ is integral, Noetherian, and of Krull dimension one, these irreducible components are closed points in $|S|$.
    This proves the first claim.
    We show the second claim. 
    Since $V\to S$ is \'{e}tale, \Cref{prop:base_change} implies $\operatorname{fm}_{\operatorname{sg}}(E_V) = |V|$.
    Thus, by \Cref{prop:germwise_criteria}, $\Phi^{\operatorname{sg}}_{E_V}$ is fully faithful.
\end{proof}

\begin{lemma}
    \label{lem:hypersurface_regular}
    Let $(A,\mathfrak{m})$ be a regular local ring.
    Suppose $h\in\mathfrak{m}$ is nonzero.
    Then $A/(h)$ is regular if, and only if, $h\notin\mathfrak{m}^2$.
\end{lemma}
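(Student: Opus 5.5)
The plan is to compare embedding dimensions. Set $d := \dim A$ and $B := A/(h)$, with maximal ideal $\mathfrak{n} = \mathfrak{m}/(h)$. Since $A$ is a regular local ring, it is a domain, so the nonzero element $h$ is a nonzerodivisor. Krull's principal ideal theorem, together with catenarity (or Cohen--Macaulayness) of regular local rings, then gives $\dim B = d-1$. Hence $B$ is regular if and only if $\dim_k \mathfrak{n}/\mathfrak{n}^2 = d-1$, where $k = A/\mathfrak{m}$.

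The key step is to identify the cotangent space of $B$:
\begin{displaymath}
    \mathfrak{n}/\mathfrak{n}^2 \cong \mathfrak{m}/(\mathfrak{m}^2 + (h)).
\end{displaymath}
This holds because $\mathfrak{n}^2 = (\mathfrak{m}^2 + (h))/(h)$. Consequently, $\dim_k \mathfrak{n}/\mathfrak{n}^2$ equals $d$ minus the dimension of the image of $h$ in $\mathfrak{m}/\mathfrak{m}^2$. That image is one-dimensional if $h \notin \mathfrak{m}^2$, and zero if $h \in \mathfrak{m}^2$. We use here that $\dim_k \mathfrak{m}/\mathfrak{m}^2 = d$, which is the regularity of $A$.

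The two cases then finish the argument. If $h \notin \mathfrak{m}^2$, the embedding dimension of $B$ is $d-1 = \dim B$, so $B$ is regular. If $h \in \mathfrak{m}^2$, the embedding dimension of $B$ is $d > d-1 = \dim B$, so $B$ is not regular.

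The only point needing care is the equality $\dim B = d-1$, in particular that it does not drop further. This follows from $\dim A/(h) \geq \dim A - 1$ for any element of $\mathfrak{m}$, combined with $h$ not lying in any minimal prime (as $A$ is a domain and $h \neq 0$). Alternatively, one can cite \cite[Theorem 14.2]{Matsumura:1989} (or the corresponding results in \cite{StacksProject}).
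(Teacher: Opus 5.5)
Your proof is correct and is essentially the paper's argument: both identify the cotangent space of $A/(h)$ with $\mathfrak{m}/(\mathfrak{m}^2+(h))$, and both use that $A$ is a domain, so $\dim A/(h)<\dim A$ for $h\neq 0$. The one difference is that the paper cites \cite[Theorem 14.2]{Matsumura:1989} for the direction $h\notin\mathfrak{m}^2$, whereas you compute $\dim A/(h)=\dim A-1$ directly; catenarity is not needed for that, since $\dim A/(h)\geq \dim A-1$ holds in any Noetherian local ring.
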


\begin{proof}
    If $h\notin\mathfrak{m}^2$, then $A/(h)$ is regular \cite[Theorem 14.2]{Matsumura:1989}.
    Suppose $h\in\mathfrak{m}^2$.
    Then $\mathfrak{m}/\mathfrak{m}^2$ is the cotangent space of $A/(h)$.
    Hence, the maximal ideals of $A/(h)$ and $A$ have the same minimal number of generators (i.e.\ $\dim A$).
    By \cite[Theorem 14.3]{Matsumura:1989}, $A$ is an integral domain.
    As $h\neq 0$, every chain of prime ideals of $A$ containing $h$ can be lengthened by adjoining the zero ideal.
    It follows that $\dim A/(h)<\dim A$, and so $A/(h)$ is not regular.
\end{proof}

\begin{example}
    \label{ex:arithmetic_conic_bundle}
    Let $R=\mathbb{Z}[\sqrt{-3}]$ and $S=\operatorname{Spec}(R)$.
    Then $R$ is excellent \cite[\href{https://stacks.math.columbia.edu/tag/07QW}{Tag 07QW}]{StacksProject}.

    We first compute $\operatorname{reg}(S)$.
    Write $R=\mathbb{Z}[t]/(t^2+3)$.
    Let $\mathfrak{M}\subseteq\mathbb{Z}[t]$ be a maximal ideal containing $t^2+3$. 
    Denote by $\ell$ the characteristic of the residue field $\mathbb{Z}[t]/\mathfrak{M}$.
    The local ring $\mathbb{Z}[t]_{\mathfrak{M}}$ is regular \cite[Theorem 19.5]{Matsumura:1989}.
    By \Cref{lem:hypersurface_regular}, $R$ is regular at $\mathfrak{M}$ if, and only if, $t^2+3\notin\mathfrak{M}^2$.
    If $t^2+3\in\mathfrak{M}^2$, then the image of $t^2+3$ in $\mathbb{F}_\ell[t]$ is the square of a linear polynomial, and so $\ell$ divides the discriminant $-12$.
    If $\ell=3$, then $\mathfrak{M}=(3,t)$, and $t^2+3\equiv 3$ modulo $\mathfrak{M}^2$, which is nonzero in $\mathfrak{M}/\mathfrak{M}^2$.
    If $\ell=2$, then $t^2+3\equiv(t+1)^2$ modulo $2$, and so $\mathfrak{M}=(2,t+1)$.
    Hence, it follows that
    \begin{displaymath}
        t^2+3 = (t+1)^2-2(t+1)+4\in\mathfrak{M}^2.
    \end{displaymath}
    Since $R$ is an integral domain, its generic point is regular.
    Therefore, $|S|\setminus\operatorname{reg}(S) = \{\mathfrak{p}_2\}$ where $\mathfrak{p}_2 = (2,1+\sqrt{-3})$.
    It is the unique prime ideal of $R$ over $2$.
    Indeed, $R/2R\cong\mathbb{F}_2[t]/((t+1)^2)$, and its residue field is $\mathbb{F}_2$.
    Moreover, as $2$ is not a square modulo $5$, $t^2+3\equiv t^2-2$ modulo $5$ is irreducible over $\mathbb{F}_5$.
    Hence, $(5)$ is a maximal ideal of $R$ with residue field $\mathbb{F}_{25}$.
    Consequently, $R_{(5)}$ is a regular local ring of dimension one whose maximal ideal is generated by $5$.

    Consider the conic bundle
    \begin{displaymath}
        Y = \operatorname{Proj}(R[x,y,z]/(xy-25z^2))\subseteq\mathbf{P}^2_S
    \end{displaymath}
    with structure morphism $f\colon Y\to S$.
    Since $Y$ is closed in $\mathbf{P}^2_S$, $f$ is proper.
    We check that $f$ is flat.
    Let $w\in Y$.
    Denote by $q\in S$ the image of $w$.
    On a basic affine open of $\mathbf{P}^2_S$ containing $w$, $Y$ is cut out by a dehomogenization $h$ of $xy-25z^2$.
    The image of $xy-25z^2$ in $\kappa(q)[x,y,z]$ is nonzero, and so the image of $h$ in the domain $\mathcal{O}_{\mathbf{P}^2_{\kappa(q)},w}$ is a nonzerodivisor.
    Since $\mathbf{P}^2_S\to S$ is flat, \cite[\href{https://stacks.math.columbia.edu/tag/00MF}{Tag 00MF}]{StacksProject} implies $h$ is a nonzerodivisor in $\mathcal{O}_{\mathbf{P}^2_S,w}$ and $\mathcal{O}_{Y,w}$ is flat over $\mathcal{O}_{S,q}$.
    Note that each fiber of $f$ is an effective Cartier divisor in $\mathbf{P}^2$ over a field.
    Hence, it is a local complete intersection over that field, and so it is Gorenstein \cite[\href{https://stacks.math.columbia.edu/tag/0BVA}{Tag 0BVA}]{StacksProject}.
    As $f$ is also flat, it is Gorenstein \cite[\href{https://stacks.math.columbia.edu/tag/0C05}{Tag 0C05}]{StacksProject}.

    There exists a short exact sequence
    \begin{displaymath}
        0\to\mathcal{O}_{\mathbf{P}^2_S}(-2)\xrightarrow{xy-25z^2}\mathcal{O}_{\mathbf{P}^2_S}\to\mathcal{O}_Y\to 0.
    \end{displaymath}
    By \cite[\href{https://stacks.math.columbia.edu/tag/01XT}{Tag 01XT}]{StacksProject}, $H^j(\mathbf{P}^2_S,\mathcal{O}_{\mathbf{P}^2_S}(-2)) = 0$ for all $j$, the canonical morphism $R\to H^0(\mathbf{P}^2_S,\mathcal{O}_{\mathbf{P}^2_S})$ is an isomorphism, and $H^j(\mathbf{P}^2_S,\mathcal{O}_{\mathbf{P}^2_S})=0$ for all $j>0$.
    Hence, $R\to H^0(Y,\mathcal{O}_Y)$ is an isomorphism, and $H^j(Y,\mathcal{O}_Y)=0$ for all $j>0$.
    Since $S$ is affine, \cite[\href{https://stacks.math.columbia.edu/tag/01XJ}{Tags 01XJ} \& \href{https://stacks.math.columbia.edu/tag/01XK}{01XK}]{StacksProject} imply the canonical morphism $\mathcal{O}_S\to\mathbf{R}f_\ast\mathcal{O}_Y$ is an isomorphism.
    The same argument shows that $L\to\mathbf{R}\Gamma(Y_L,\mathcal{O}_{Y_L})$ is an isomorphism for every morphism $\operatorname{Spec}(L)\to S$ from a field (where $Y_L$ is the base change of $Y$).
    Applying \Cref{prop:explicit_loci_diagonal},
    \begin{displaymath}
        \operatorname{fm}_{\operatorname{sg}}(\mathcal{I}_{\Delta_f}) = \operatorname{FM}_{\operatorname{sg}}(\mathcal{I}_{\Delta_f}) = |S|\setminus\{\mathfrak{p}_2\}.
    \end{displaymath}

    We claim that this locus contains points where the singularity category is nontrivial.
    Indeed, the fiber of $f$ over $(5)$ is the pair of lines $xy=0$ over $\mathbb{F}_{25}$.
    Let $v$ be its point $x=y=0$.
    On the affine open $z\neq 0$, $Y$ is $\operatorname{Spec}(R[x,y]/(xy-25))$.
    Hence, $\mathcal{O}_{Y,v} \cong R[x,y]_{(5,x,y)}/ (xy-25)$.
    Note that $(xy-25)$ lies in the square of the maximal ideal of $R[x,y]_{(5,x,y)}$.
    Since $R[x,y]_{(5,x,y)}$ is a localization of $R_{(5)}[x,y]$, it is a regular local ring \cite[Theorem 19.5]{Matsumura:1989}.
    Hence, \Cref{lem:hypersurface_regular} implies $Y$ is not regular at $v$.
    That is, $v$ is an $A_1$-singularity of the form $xy=5^2$, where $5$ is a uniformizer of $R_{(5)}$.
    The identity of $S$ and the point $(5)$ form an \'{e}tale germ.
    For simplicity, set $T:=\operatorname{Spec}(R_{(5)})$.
    Since $T$ is regular, the proof of \Cref{prop:explicit_loci_diagonal} shows the functor induced on $D_{\operatorname{sg}}(Y\times_S T)$ is naturally isomorphic to $[-1]$.
    As $Y\times_S T$ contains $v$, it is not regular, and so $D_{\operatorname{sg}}(Y\times_S T)$ is nontrivial \cite[Lemma 5.28]{GuisadoVillaalgordo/Lank:2026}.
    In particular, \Cref{cor:regular_total_spaces} does not detect $(5)$.

    On the other hand, $25\equiv 1$ modulo $2$, and so the fiber of $f$ over $\mathfrak{p}_2$ is the conic $xy+z^2=0$ over $\mathbb{F}_2$.
    On the affine opens $x\neq 0$, $y\neq 0$, and $z\neq 0$ it is respectively isomorphic to $\operatorname{Spec}(\mathbb{F}_2[z])$, $\operatorname{Spec}(\mathbb{F}_2[z])$, and $\operatorname{Spec}(\mathbb{F}_2[x,x^{-1}])$.
    Hence, it is regular.
    However, $Y$ is not regular at any point $w$ over $\mathfrak{p}_2$.
    Otherwise, as $\mathcal{O}_{S,\mathfrak{p}_2}\to\mathcal{O}_{Y,w}$ is a flat local homomorphism, $R_{\mathfrak{p}_2}$ would be regular \cite[\href{https://stacks.math.columbia.edu/tag/00OF}{Tag 00OF}]{StacksProject}.
    Thus, the failure at $\mathfrak{p}_2$ comes from the singularity of the base, and not from the fiber.

    Lastly, we compare with $D^b_{\operatorname{coh}}$.
    By \cite[Lemma 5.32]{GuisadoVillaalgordo/Lank:2026}, $\mathcal{I}_{\Delta_f}$ is relatively perfect over each factor, and so \cite[Theorem 1.3]{GuisadoVillaalgordo/Lank:2026} applies.
    Let $\operatorname{Spec}(L)\to S$ be a morphism from a field.
    As $S$ is affine, $\operatorname{Spec}(L)\to S$ is affine.
    By \cite[Proposition 5.35]{GuisadoVillaalgordo/Lank:2026}, the derived pullback of $\mathcal{I}_{\Delta_f}$ to $Y_L\times_L Y_L$ is the ideal sheaf $\mathcal{I}_{\Delta_{f_L}}$ of the diagonal of $f_L\colon Y_L\to\operatorname{Spec}(L)$.
    Since $L\to\mathbf{R}\Gamma(Y_L,\mathcal{O}_{Y_L})$ is an isomorphism, \Cref{lem:diagonal_ideal_kills_pullbacks} implies $\Phi_{\mathcal{I}_{\Delta_{f_L}}}(\mathcal{O}_{Y_L})\cong 0$.
    As $\mathcal{O}_{Y_L}$ is nonzero, $\Phi_{\mathcal{I}_{\Delta_{f_L}}}$ is not faithful on $D^b_{\operatorname{coh}}(Y_L)$.
    Consequently, the loci of \cite[Theorem 1.3]{GuisadoVillaalgordo/Lank:2026}, where $\Phi_{\mathcal{I}_{\Delta_f}}$ is fiberwise fully faithful or an equivalence on $D^b_{\operatorname{coh}}$, are both empty.
\end{example}

\bibliographystyle{alpha}
\bibliography{mainbib}

\end{document}